%
%
\input amssymb.sty
\documentclass{amsproc}
\usepackage{graphicx}
\usepackage{color}
\usepackage[english]{babel}
\usepackage[T1]{fontenc}
\usepackage{amsmath}
\usepackage{amscd}
\usepackage{amstext}
\usepackage{amsbsy}
\usepackage{amsopn}
\usepackage{amsthm}
\usepackage{amsxtra}
\usepackage{upref}
\usepackage{amsfonts}
\usepackage{amssymb}
\usepackage{mathrsfs}
\usepackage{euscript}
\usepackage{array}
\usepackage{stmaryrd}
\usepackage{verbatim}
\newtheorem{theorem}{Theorem}[section]
\newtheorem{lemma}[theorem]{Lemma}
\newtheorem{corollary}[theorem]{Corollary}
\newtheorem{proposition}[theorem]{Proposition}
\theoremstyle{definition}
\newtheorem{definition}[theorem]{Definition}
\newtheorem{definition-proposition}[theorem]{Definition-Proposition}

\theoremstyle{remark}
\newtheorem{remark}[theorem]{Remark}

\newcounter{Step}
\setcounter{Step}{0}

\numberwithin{equation}{section}

\def\R{\mbox{I\hspace{-.15em}R} }

\def\C{\mathbf{C}}

\def\N{\mbox{I\hspace{-.15em}N} }

\newcommand{\tuchi}[0]{\tilde{\underline{\chi}}}
\newcommand{\tchi}[0]{\tilde{\chi}}
\newcommand{\tochi}[0]{\tilde{\chi}^{\omega}}
\newcommand{\totchi}[0]{\tilde{\chi}^{\omega,\theta}}
\newcommand{\tachi}[0]{\tilde{\chi}^{a}}

\newcommand{\red}{}
\newcommand{\gre}{}
\newcommand{\gree}{}
\newcommand{\bl}{}


\begin{document}

\title{Concavification of free entropy }

\author{Philippe Biane}
\address{\gre CNRS, IGM,
    Universit\'e Paris-Est, Champs-sur-Marne, FRANCE
}
\email{\gre biane@univ-mlv.fr}
\thanks{\gre Research of P. Biane partially supported by ANR grant GrandMa 08-BLAN-0311-01}

\author{Yoann Dabrowski}
\address{\gre Universit\'{e} de Lyon\\ 
Universit\'{e} Lyon 1\\
Institut Camille Jordan UMR 5208\\
43 blvd. du 11 novembre 1918\\
F-69622 Villeurbanne cedex\\
France
}
\email{\gre dabrowski@math.univ-lyon1.fr}
\subjclass{Primary 46L54; Secondary 60G15, 94A17 }
\date{}

\begin{abstract}
We introduce a modification of 
 Voiculescu's free entropy which coincides with  the $\liminf$ variant of
Voiculescu's  free entropy  on  extremal states, but is a concave
 upper semi-continuous function on the trace state space. 
{\gre We also extend the orbital free entropy of \cite{HiaiMUeda} to non-hyperfinite multivariable{\bl s} and prove freeness in case of additivity of Voiculescu's entropy (or vanishing of our extended orbital entropy).}

\end{abstract}

\maketitle
\section{Introduction}
Voiculescu has introduced a free entropy quantity, for tracial states on a von Neumann
algebra generated by $n$ self-adjoint elements, which
  has been very useful for the solution of many long
standing open problems in von Neumann algebra theory. It turns out that
free entropy
satisfies an unusual property for an entropy quantity which is a ``degenerate convexity"
property, i.e. the entropy of  any nonextremal state is $-\infty$, which is in
sharp contrast with the usual concavity and upper semi-continuity property of
classical entropy. Recently Hiai \cite{Hi} defined a free analogue of pressure and
considered its
Legendre transform. He obtained a quantity which is concave and upper
semi-continuous, and majorizes Voiculescu's free entropy. It is not clear
whether this quantity coincides with Voiculescu's free entropy on extremal
states. In this paper we introduce a modified definition, through random matrix
approximations, which yields a
quantity which is both concave upper semi-continuous, and coincides with the
$\liminf$ variant of
Voiculescu's  free entropy  on  extremal states. 
Our main argument is the simple observation that a probability
measure on a 
compact convex set,
whose barycenter is close to an extremal point, has most of its mass
concentrated near this point (see Lemma \ref{concentration} below). This is obvious in finite dimension, but requires further clarification  in infinite dimension. In this paper we rely on the fact that the convex set we consider is a {\gre Poulsen simplex}. 

We use an analogous idea to generalize the definition of free orbital entropy, due to
Hiai, Miyamoto and Ueda \cite{HiaiMUeda}. In this paper, the authors introduced, via  a microstates
approach, an entropy quantity
$\chi_{orb}(\mathbf{X_1},\ldots,\mathbf{X_n})$, where each $\mathbf{X_i}$ is a finite set of noncommutative random variables generating a hyperfinite algebra.
They used this quantity to  generalize Voiculescu's additivity result (\cite{Voi4}),  namely~: for  noncommutative random variables
$X_1,\ldots,X_n$, if $$\chi(X_1,\ldots,X_n)=\chi(X_1)+\ldots+\chi(X_n)$$ and these quantities are finite, then
the $X_i$ are free. More generally, they showed that $\chi_{orb}(\mathbf{X_1},\ldots,\mathbf{X_n})=0$ is equivalent to freeness in the hyperfinite  context above even though the finiteness of entropy fails in general in this case. They recover the previous result since they also show~: $$\chi(X_1,\ldots,X_n)=\chi_{orb}(X_1,\ldots,X_n)+\chi(X_1)+\ldots+\chi(X_n),$$ in case these quantities are finite. 

 In section 7,  we introduce a definition of $\chi_{orb}(\mathbf{X_1},\ldots,\mathbf{X_n})$, for arbitrary finite sets
$\mathbf{X_i}$ of noncommutative random variables, obtained by replacing microstates by probability measures. We show that many of the arguments of \cite{HiaiMUeda} have analogues  in this setting, and we obtain the full generalization of the additivity result when random variables $X_i$ are replaced by arbitrary finite sets 
$\mathbf{X_i}$.

This paper is organized as follows.
We start by recalling some well known facts on trace states and on Legendre
transform and classical entropy (including Csiszar's projections result) in section 2 and 3. Then we prove the main result about concavification in section 4 and 6, after a few preliminaries about Poulsen simplices in section 5. In section 7 we extend the definition of orbital entropy,  and prove freeness in case of additivity of Voiculescu's entropy, in Corollary \ref{additivity}. Finally,  after a few more preliminaries in section 8, section 9 is devoted to some further variants and extensions of our definitions, which might prove useful for future applications.

 \bigskip\textbf{Acknowledgments :} The authors want to thank respectively U. Haagerup and D. Shlyakhtenko for fruitful discussions. The authors are grateful to the Erwin Schr\"{o}dinger Institute where part of this work has been completed. They also thank the organizers of the workshop on "Random Matrix, Operator Algebra, and Mathematical Physics Aspects" in the semester on Bialgebras in free Probability having taken place there. Finally we would like to thank an anonymous referee for some very helpful comments and suggestions.

     \section{The set of trace states}
     Let ${\bf C}\langle
     X_1,\ldots,X_n\rangle$ be the free $*$-algebra with unit generated by
     $n{ \geq 1}$ self-adjoint elements $X_1,\ldots,X_n$, which we identify with the    space of noncommutative polynomials in the indeterminates $X_1,\ldots,X_n$.
     We consider the set { $\mathcal S_c^n$} of trace states on ${\bf C}\langle
     X_1,\ldots,X_n\rangle$. This set  consists in all positive, tracial 
      $*$-linear maps $\tau:{\bf C}\langle
     X_1,\ldots,X_n\rangle\to {\bf C}$ such that $\tau(1)=1$ and, 
for any $P\in {\bf C}\langle     X_1,\ldots,X_n\rangle $  there exists
      some constant
     $R_P>0$ such that {\red \begin{equation}
     \label{norm}\tau((P^{*}P)^k)\leq R_P^{2k}\qquad\text{
      for }k\geq 0
     \end{equation}}
     Let us denote by $\mathcal S_R^n$ the set of all trace states
      such that $\max(R_{X_1},...,R_{X_n})\leq R$.
      
      { Especially, for $R\geq T$ we have $\mathcal S_R^n\supset \mathcal S_T^n$. Moreover, $\mathcal S_c^n=\cup_{R\geq 0}S_R^n$. Finally for $\tau\in \mathcal S_c^n$, we define $\mathcal{R}(\tau)=\inf\{ R, \tau \in S_R^n\}$ so that obviously $\tau \in S_{\mathcal{R}(\tau)}^n$.}
      
     The set { $\mathcal S_R^n$} can be identified with the set of trace states on the free product
     $C^*$-algebra $*_{i=1}^nC([-R,R])$, cf \cite{Hi}. It is a compact convex 
     set for the 
     weak$^*$ topology. By the reduction theory for von Neumann algebras, it is a Choquet simplex,
     and its extreme points (for $n\geq 2)$ are the factor states 
       \cite{Tak}. { Note that, as a consequence, an extreme point in { $\mathcal S_R^n$} is still an extreme point in { $\mathcal S_T^n$} for $T\geq R$.} Moreover, the second author proved in \cite[Corollary 5]{D} that, for  ${ n > 1},$ { $\mathcal S_R^n$} is a Poulsen Simplex, i.e. the unique metrizable Choquet simplex with a dense set of extreme points (cf. \cite{L}). 
If $\mathcal A$ is a von Neumann algebra equipped with a tracial state $\varphi$, and
           $(X_1,\ldots,X_n)\in \mathcal A$  an $n$-tuple such that
$\sup_i\Vert X_i\Vert\leq R$, one defines a state $\tau_{X_1,\ldots,X_n}\in
      \mathcal S^n_R$ by the formula
     $$ \tau_{X_1,\ldots,X_n}(P)=\varphi(P(X_1,\ldots,X_n)).$$
In particular, if $\mathcal A=M_N({\bf C})$ and $\varphi=\frac{1}{N}Tr$ the normalized trace,
we denote by $H_N^R$ the set of hermitian matrices of size $N$, whose
     operator norm is less than $R$, then
      an $n$-tuple
      $(M_1,\ldots,M_n)\in (H_N^R)^n$ defines a state $\tau_{M_1,\ldots,M_n}\in
       \mathcal S^n_R$, by
      $$ \tau_{M_1,\ldots,M_n}(P)=\frac{1}{N}Tr(P(M_1,\ldots,M_n)).$$
      Similarly, a probability measure $\mu$ on $(H_N^R)^n$  (always assumed Borel) defines 
     a random state in $\mathcal S^n_R$, whose barycenter $\tau_{\mu}$, defined
     by
    $$\tau_{\mu}(P)=\int_{(H_N^R)^n}\frac{1}{N}Tr(P(M_1,\ldots,M_n))d\mu(M_1,\ldots,M_n),$$
 is again an element of
      $\mathcal S^n_R$.
     
      For $\tau\in \mathcal S^n_R$, let $V_{\epsilon,K}(\tau)$ be the set of
      states $\sigma\in \mathcal S^n_R$ such that,
      for all monomials $m$ of degree less than $K$, {\gree we have}~:
      $$|\tau\left(m(X_1,\ldots, X_n)\right)-\sigma\left(m(X_1,\ldots,X_n)\right)|<
     \epsilon.$$
     The sets $(V_{\epsilon,K}(\tau);\epsilon,K>0 )$ form a basis of 
     neighbourhoods of $\tau$ in the
     weak$^*$ topology.
      \section{Classical entropy, its Legendre transform {\gre and Csiszar's projection}}
      Recall that the entropy of a probability measure $\mu$ on $\bf{R}^p$ is the
      quantity
      $$\text{Ent}(\mu)=\left\lbrace\begin{array}{l}-\int_{\bf{R}^p} f(x)\log f(x)dx\quad
      \text{if}\ \mu(dx)=f(x)dx,{ \log(f)\in L^1(\mu)}\\ \\
      -\infty \quad\text{ otherwise}\end{array}\right .$$
      The entropy is a concave upper semi-continuous function of $\mu$.
      
      Moreover, there is also a well known notion of relative entropy of two probability measures (also called Kullback-Leibler divergence, cf. \cite{K}).
      $$\text{Ent}(\mu|\nu)=\left\lbrace\begin{array}{l}-\int_{\bf{R}^p} f(x)\log f(x)d\nu(x)\quad
      \text{if}\ \mu(dx)=f(x)d\nu(x),
      \\ \\
      -\infty \quad\text{if $\mu$ is not absolutely
      continuous with respect to $\nu$
      }\end{array}\right .$$
Note that, by Jensen inequality, $\text{Ent}(\mu|\nu)\leq 0$. The relative entropy
  satisfies the following key property: For any  measurable map 
$T$, { if $T_*\mu$ is the pushforward measure of $\mu$, { we have}  (cf. \cite[Chap 2 Th 4.1]{K}):}
\begin{equation}\label{push}
 \text{Ent}(T_{*}\mu|T_{*}\nu)\geq \text{Ent}(\mu|\nu).
\end{equation} 
             If $E\subset \bf {R}^p$ is a subset with positive Lebesgue
      measure, and $\mu$ is the normalized Lebesgue measure on $E$, then
      $$\text{Ent}(\mu)=\log(\text{Leb}(E)) .$$
Actually this is the maximum value of $\text{Ent}$ on the set of
 all probability
      measures supported by $E$.
Analogously, if $\mu$ is the restriction of $\nu$ to $E$, renormalized into a probability measure, then 
$$\text{Ent}(\mu|\nu)=\log(\nu(E))).$$
and again this is the maximum value of $\text{Ent}(.|\nu)$ on the set of
 all probability
      measures supported by $E$.
From this we deduce the following estimates.

\begin{lemma}
Let $\mu$ 
 be supported by $E$ and $F\subset E$ a measurable subset, then

      \begin{equation}
      \begin{array}{rcl}\label{ent}
      \text{Ent}(\mu)&\leq &\mu(F)\log\text{Leb(F)}+\mu(E\setminus F)
      \log\text{Leb}(E\setminus F)\\&&\qquad-\mu(F)\log\mu(F)-(1-\mu(F))
      \log(1- \mu( F))\end{array}
      \end{equation}
and
      \begin{equation}
      \begin{array}{rcl}\label{entnu}
      \ \text{Ent}(\mu|\nu)&\leq & \mu(F)\log\nu(F)+\mu(E\setminus F)
      \log\nu(E\setminus F)\\&&  \qquad-\mu(F)\log\mu(F)-(1-\mu(F))
      \log(1- \mu( F)).
           \end{array}
      \end{equation}
 \end{lemma}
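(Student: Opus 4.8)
The plan is to reduce both inequalities to the maximum-entropy property recalled just above, by splitting $\mu$ along the partition $E = F \sqcup (E\setminus F)$ and exploiting the fact that the entropy of a mixture of measures with \emph{disjoint} supports decomposes into the weighted average of the individual entropies plus the Shannon entropy of the weights. Throughout I write $t = \mu(F)$, so that $\mu(E\setminus F) = 1-t$.

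First I would dispose of the degenerate situations. If $\mu$ is not absolutely continuous (with respect to Lebesgue measure for \eqref{ent}, or with respect to $\nu$ for \eqref{entnu}), then the left-hand side equals $-\infty$ and there is nothing to prove; hence I may assume $\mu$ has a density $f$. If $t=0$ or $t=1$, one of the two pieces carries no mass, and with the convention $0\log 0 = 0$ the asserted bound collapses to the single-set maximum-entropy property, so I may also assume $0<t<1$. Note that under these hypotheses $\mathrm{Leb}(F)$, $\mathrm{Leb}(E\setminus F)$ (respectively $\nu(F)$, $\nu(E\setminus F)$) are all strictly positive, since a null set would force $t\in\{0,1\}$ by absolute continuity; thus every logarithm appearing on the right-hand side is finite.

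Under these assumptions, set $\mu_F = t^{-1}\,\mu|_F$ and $\mu_{E\setminus F} = (1-t)^{-1}\,\mu|_{E\setminus F}$, two probability measures supported respectively on $F$ and on $E\setminus F$, so that $\mu = t\,\mu_F + (1-t)\,\mu_{E\setminus F}$; their densities are $t^{-1}f\mathbf{1}_F$ and $(1-t)^{-1}f\mathbf{1}_{E\setminus F}$. Splitting the integral $-\int f\log f$ over $F$ and over $E\setminus F$, factoring out the constants $t$ and $1-t$, and using that each normalized restriction integrates to $1$ on its support, I obtain the decomposition
\begin{equation*}
\text{Ent}(\mu) = t\,\text{Ent}(\mu_F) + (1-t)\,\text{Ent}(\mu_{E\setminus F}) - t\log t - (1-t)\log(1-t),
\end{equation*}
and the identical computation with $d\nu$ in place of $dx$ yields the analogous formula for $\text{Ent}(\,\cdot\,|\,\nu)$. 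Since $\mu_F$ is supported on $F$ and $\mu_{E\setminus F}$ on $E\setminus F$, the maximum-entropy property gives $\text{Ent}(\mu_F)\leq \log\mathrm{Leb}(F)$ and $\text{Ent}(\mu_{E\setminus F})\leq\log\mathrm{Leb}(E\setminus F)$ (respectively $\text{Ent}(\mu_F|\nu)\leq\log\nu(F)$ and $\text{Ent}(\mu_{E\setminus F}|\nu)\leq\log\nu(E\setminus F)$). Substituting these bounds into the decomposition and recalling $1-t=\mu(E\setminus F)$ produces exactly \eqref{ent} and \eqref{entnu}.

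The only point requiring genuine care is the mixture-decomposition identity: one must verify that the cross terms disappear precisely because $\int g_F\,dx = \int g_{E\setminus F}\,dx = 1$ for the normalized densities, and that the $0\log 0$ bookkeeping stays consistent as a piece's mass shrinks toward (but stays above) zero. This is elementary, and the substantive content is entirely carried by the maximum-entropy characterization quoted above, which I would simply invoke rather than reprove.
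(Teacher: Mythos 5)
Your proof is correct and is essentially the paper's own argument: the paper likewise splits $\int f\log f$ over $F$ and $E\setminus F$, rewrites each piece as the weight times the entropy of the normalized restriction plus the Shannon entropy of the weights, and then invokes the maximum-entropy property of normalized Lebesgue (resp.\ renormalized $\nu$) measure on each set. Your explicit treatment of the degenerate cases ($\mu$ not absolutely continuous, $t\in\{0,1\}$) is additional bookkeeping the paper leaves implicit, not a different method.
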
     

\begin{proof}
\begin{equation*}
      \begin{array}{rcl}
      \text{Ent}(\mu)&=&-\int_F f(x)\log f(x)dx-\int_{E\setminus F}f(x)\log
      f(x)dx\\
      &=&-\mu(F)\int_F \frac{f(x)}{\mu(F)}\log \frac{f(x)}{\mu(F)}
      dx-\mu(E\setminus F)\int_{E\setminus F}\frac{f(x)}{\mu(E\setminus F)}\log
      \frac{f(x)}{\mu(E\setminus
      F)}dx\\&&\qquad-\mu(F)\log\mu(F)-\mu(E\setminus F)\log \mu(E\setminus F)\\
      &\leq &\mu(F)\log\text{Leb(F)}+\mu(E\setminus F)
      \log\text{Leb}(E\setminus F)\\&&\qquad-\mu(F)\log\mu(F)-(1-\mu(F))
      \log(1- \mu( F)).\end{array}
      \end{equation*}
The proof of the other inequality is similar (cf. {\bl \cite[Chap 2 Cor 3.2]{K}}).
\end{proof}
      We shall need another characterization of entropy, through its Legendre
      transform. Indeed {\gree we have}, for any  probability measure $\mu$ supported by a set $E$, of finite Lebesgue measure,
      $$\text{Ent}(\mu)=\inf_{\phi\in C_b(E)}
       \left(\log\left(\int_E\exp \phi(x)dx\right)-\int_E\phi(x)\mu(dx)\right).$$
{ where $C_b(E)$ is the space of bounded, real valued continuous  functions on $E$}. Likewise (see e.g. \cite{DZ} section 6.2)    for any  probability measures $\mu,\nu$ supported on $E$,
      \begin{equation}\label{dualEnt}\text{Ent}(\mu|\nu)=\inf_{\phi\in C_b(E)}
       \left(\log\left(\int_E\exp \phi(x)d\nu(x)\right)-\int_E\phi(x)\mu(dx)\right).\end{equation}   
       
       It follows that if $f_1,\dots, f_p$ are { real valued} bounded measurable
        functions on $E$, then {\gree we have}
      \begin{equation}\label{maxent}
     \begin{array}{c} \inf_{\lambda\in \mathbb R^p}
     \left(\log\int_Ee^{\sum_i\lambda_if_i(x)}dx
      -\sum_ia_i\lambda_i\right)=\\
      \sup\left\{\text{Ent}(\mu)\,|\, \mu \text{ supported on }E;\,\int
      f_i(x)\mu(dx)=a_i,i=1,\ldots,p\right\}
      \end{array}
      \end{equation}
      where the $\sup$ is defined as $-\infty$ if there is no such probability
      measure.
      
    We will apply these considerations to the case where
    the set $E$ is a product of balls $H_N^R$, i.e. balls of radius $R$
    for the operator norm in the space of $N\times N$ hermitian matrices, with
     Lebesgue measure, and
    the functions $f_1,\ldots, f_p$ are traces of selfadjoint polynomials in
    noncommuting indeterminates,
     of the form $$f(M_1,\ldots, M_n)
    =NTr(P(M_1,\ldots,M_n)).$$
    Let us define
    $$I_N(P)=\int_{(H_N^R)^n}e^{-NTr(P(M_1,\ldots,M_n))}dM_1\ldots dM_n,$$ for $P$ 
     a self-adjoint  element of $\bf{C}\langle X_1,\ldots,X_n\rangle$.
    \begin{definition} For {\red  $\tau\in\mathcal S_R^{n}$, we define
    $\rho_{N,K}
   (\tau)$ as
   the maximum of the entropy of (Borel) probability  measures $\mu$ on $(H_N^R)^{n}$ 
   whose barycenter  coincides with $\tau$ on monomials of degree less than $K$,}
   and $\rho_{N,K}
   (\tau)=-\infty$ if there is no such measure. { Equivalently, if $P[(H_N^R)^{n}]$ is the the above set of Borel probability measures, we have~: $$\rho_{N,K}
   (\tau)=\sup_{{\mu \in P[(H_N^R)^{n}]\atop \,\tau_\mu\in
\cap_{\epsilon>0} V_{\epsilon,K}(\tau)} }Ent(\mu).$$
   
   }
   \end{definition}

   {\gree We have}, by (\ref{maxent}) :
     \begin{equation}\label{Legendre}\rho_{N,K}
   (\sigma)=
   \inf_{P\in\mathbf{C}\langle
     X_1,\ldots,X_n\rangle\atop  P=P^*,\text{deg}(P)\leq
     K}\left(\log I_N(P)+N^2\sigma(P)\right),
\end{equation}
   which is therefore  a concave upper semi-continuous
    function of $\sigma$.
    
    {\gre
     Even though we won't need it before section 9, it may be entlightening to use the language of Csiszar's I-projections (cf. \cite{Cis}, see also \cite[Chapter 10]{N} for an exposition). Let us recall the basics.
Let $\mathcal{E}$ be a closed convex set of probability distributions then, by the strict concavity of relative entropy, there exists a unique probability measure realizing  $\sup_{\mu\in \mathcal{E}}\text{Ent}(\mu|\nu)$. This probability distribution, denoted $C$, is called 
Csiszar's I-projection  of the probability distribution $\nu$ on the convex set $\mathcal{E}$.  Csiszar \cite{Cis} first proved its existence when $\mathcal{E}$ is variation closed and contains a $\mu$ with $\text{Ent}(\mu|\nu)>-\infty$. Moreover $C$ is characterized by : $$\text{Ent}(\mu|\nu)\leq\text{Ent}(\mu|C)+\text{Ent}(C|\nu),$$ for every $\mu\in \mathcal{E}$.
     We can infer from this that   $\rho_{N,K}
   (\tau)$, { if finite}, is the entropy  of Csiszar's I-projection { $C_{N,0,K}(\tau)$} of normalized Lebesgue measure { (on $(H_N^R)^n$)} on the set  of measures { whose mean agrees with $\tau$ on monomials of order less than $K$}. 
  { It is a well known result about exponential families (see e.g. \cite[Theorem 3.1]{Cis} or \cite[Theorem 10.2]{N}) that $C_{N,0,K}(\tau)$ has a density with respect to normalized Lebesgue measure on $(H_N^R)^n$ of the form $\frac{1}{Z}e^{-Tr(V(X))}$ for a non commutative polynomial $V$ of degree less than $K$. Especially, $\rho_{N,K}
   (\tau)$ is the entropy of a well-studied unitary invariant random matrix model. }   }

     \section{Voiculescu's free entropy and its modification}\label{VFE}
     Let $\tau\in\mathcal S_R^n$, let $\epsilon>0$ be a real number and
     $K,N$ be positive integers. We denote by
     $\Gamma_R(\tau,\epsilon,K,N)$ the set of $n$-tuples of hermitian matrices
     $M_1,\ldots,M_n\in H_N^R$ such that
     for all monomials $m(X_1,\ldots,X_n)=X_{i_1}\ldots X_{i_k}$ of degree less than $K$ {\gree we have}~:
     $$|\tau\left(m(X_1,\ldots,
     X_n)\right)-\frac{1}{N}Tr\left(m(M_1,\ldots,M_n)\right)|<
     \epsilon$$
     Equivalently $\Gamma_R(\tau,\epsilon,K,N)$ is the set 
     of $n$-tuples of hermitian matrices
     $M_1,\ldots,M_n\in H_N^R$ whose associated state $\tau_{M_1,\ldots, M_n}$
     is in $V_{\epsilon,K}(\tau)$.

\begin{definition}\cite{V1} Define { for $\tau\in\mathcal S_R^n$ :}
          $$\chi_R(\tau)=\lim_{K\to\infty,\epsilon\to 0}
     \limsup_{N\to\infty}\left(\frac{1}{N^2}\log\left(
     \text{Leb}(\Gamma_R(\tau,\epsilon,K,N)\right))+\frac{n}{2}\log N\right).$$
The free entropy of a tracial state $\tau{ \in\mathcal S_c^n}$ is~:
$$\chi(\tau)=\sup_{R\geq { \mathcal{R}(\tau)}}\chi_R(\tau){.}$$
\end{definition}     
 It is known that, if $\tau$ is not an
     extreme point of $\mathcal S_R^n$, then $\chi(\tau)=-\infty$, cf \cite{Voi3}.
     Furthermore, if $\tau$ is considered as a state in $\mathcal S^n_{R'}$ for
     some $R'>R\ { >\mathcal{R}(\tau)}$ then 
     $\chi_{R'}(\tau)=\chi_R(\tau)$.
        Since it is not known whether the $\limsup$ in the definition is a limit, it has been useful to define~:
     $$\underline{\chi}_R(\tau)=\lim_{K\to\infty,\epsilon\to 0}
     \liminf_{N\to\infty}\left(\frac{1}{N^2}\log\left(
     \text{Leb}(\Gamma_R(\tau,\epsilon,K,N)\right))+\frac{n}{2}\log N\right)$$
and, for  a nontrivial ultrafilter $\omega$ on $\bf N$ :
     $$\chi_R^{\omega}(\tau)=\lim_{K\to\infty,\epsilon\to 0}
     \lim_{N\to\omega}\left(\frac{1}{N^2}\log\left(
     \text{Leb}(\Gamma_R(\tau,\epsilon,K,N)\right))+\frac{n}{2}\log N\right),$$
     In \cite{EntPow}, a state { $\tau$ for which} these limits coincide is called \textit{regular}.

   We are now going to concavify 
    the previous definition in the following way.  
   
   \begin{definition}\label{ExtDef}
  We define the concavified free entropy of a tracial state $\tau\in\mathcal{S}^n_R$ by~:
   $$\tuchi_R(\tau)=\lim_{K\to\infty,\epsilon\to0}\liminf_{N\to\infty}\left(\frac{1}{N^2}\left[\sup_{\sigma\in V_{\epsilon,K}(\tau)}
   \rho_{N,K}(\sigma)\right]+\frac{n}{2}\log N\right),$$
     and  { likewise $\tchi_R(\tau)$ with a $\limsup$ and $\tochi_R(\tau)$ with a limit to $\omega$.}

Finally, we put for $\tau \in \mathcal{S}^n_c$ :
$$\tuchi(\tau)=\sup_{R{\geq \mathcal{R}(\tau)}}\tuchi_R(\tau)$$ and likewise for {$\tchi(\tau)$},
$\tochi(\tau)$.
 \end{definition}
We thus have, as for Voiculescu's free entropy, three variants, but we do not know whether they all coincide. { Note that, since $\{\mu \in P[(H_N^R)^{n}]\ : \tau_\mu\in
 V_{\epsilon,K}(\tau)\}=\cup_{\sigma\in V_{\epsilon,K}(\tau)} \{\mu \in P[(H_N^R)^{n}] \ : \ \tau_\mu\in
\cap_{\eta>0} V_{\eta,K}(\sigma)\},$ we have the alternative formula :
 $$\tuchi_R(\tau)=\lim_{K\to\infty,\epsilon\to0}\liminf_{N\to\infty}\left(\frac{1}{N^2}\left[\sup_{{\mu \in P[(H_N^R)^{n}]\atop \,\tau_\mu\in
 V_{\epsilon,K}(\tau)} }
   \text{Ent}(\mu)\right]+\frac{n}{2}\log N\right)$$
}

We have the fundamental properties~:
\begin{proposition}\label{geneProp}
      The quantity $\tuchi_R(\tau)$ 
    is a concave upper semi-continuous function of
   $\tau$. { So is $\tochi_R(\tau)$. }
Furthermore, {\gree we have}~:
   $$\tuchi_R(\tau)\geq \underline{\chi}_R(\tau),\quad
    \tchi_R(\tau)\geq \chi_R(\tau),\quad
 \tochi_R(\tau)\geq \chi_R^{\omega}(\tau),$$
and $\tchi_R,\tochi_R$ are subadditive: if $\tau_1,\tau_2$ {are the marginal states} giving the noncommutative distributions of $X_1,\ldots,X_m$ and $X_{m+1},\ldots,X_n$ respectively, then
$$\tchi_R(\tau)\leq \tchi_R(\tau_1)+\tchi_R(\tau_2),\quad
\tochi_R(\tau)\leq \tochi_R(\tau_1)+\tochi_R(\tau_2).$$
   \end{proposition}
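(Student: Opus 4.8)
The plan is to establish the four asserted properties of $\tuchi_R,\tochi_R$ in turn, exploiting the Legendre-transform representation~(\ref{Legendre}) wherever possible.

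For \textbf{concavity and upper semicontinuity}, the key observation is that~(\ref{Legendre}) exhibits $\rho_{N,K}(\sigma)$ as an infimum of affine functions of $\sigma$, hence as a concave upper semi-continuous function of $\sigma$, as already noted in the excerpt. The inner quantity $\sigma\mapsto\sup_{\sigma'\in V_{\epsilon,K}(\sigma)}\rho_{N,K}(\sigma')$, or equivalently (using the alternative formula) $\tau\mapsto\sup\{\text{Ent}(\mu):\tau_\mu\in V_{\epsilon,K}(\tau)\}$, is defined as a supremum of $\text{Ent}(\mu)$ over a constraint set that depends on $\tau$ only through finitely many moment conditions of width $\epsilon$. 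I would first argue that for fixed $N,K,\epsilon$ this inner supremum is a concave function of $\tau$: given $\tau=\alpha\tau_1+(1-\alpha)\tau_2$ and near-optimal measures $\mu_1,\mu_2$ for $\tau_1,\tau_2$, the fact that the moment functionals $P\mapsto\tau_\mu(P)$ are affine in $\mu$ together with concavity of $\text{Ent}$ lets me combine $\mu_1,\mu_2$ (via $\alpha\mu_1+(1-\alpha)\mu_2$, possibly after a small relaxation of $\epsilon$ that is then absorbed in the $\epsilon\to0$ limit) to get a competitor for $\tau$. Then concavity and upper semicontinuity pass to the limit in $N$ (through $\liminf$ or the ultrafilter limit, both of which preserve concavity) and in $K,\epsilon$, since a decreasing-in-$\epsilon$, monotone-in-$K$ limit of concave u.s.c.\ functions is again concave and u.s.c.

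For the \textbf{domination inequalities} $\tuchi_R(\tau)\geq\underline{\chi}_R(\tau)$ and its variants, the idea is to compare, for each $N,K,\epsilon$, the microstate volume $\text{Leb}(\Gamma_R(\tau,\epsilon,K,N))$ with $\sup_{\sigma\in V_{\epsilon,K}(\tau)}\rho_{N,K}(\sigma)$. Taking $\mu$ to be normalized Lebesgue measure on $\Gamma_R(\tau,\epsilon,K,N)$, its barycenter $\tau_\mu$ lies in $V_{\epsilon,K}(\tau)$ and its entropy equals $\log\text{Leb}(\Gamma_R(\tau,\epsilon,K,N))$; hence this volume term is a lower bound for the supremum defining $\tuchi_R$. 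Adding $\frac{n}{2}\log N$ and passing to the appropriate limit over $N$ (the $\liminf$ for $\underline{\chi}$ versus $\tuchi$, the $\limsup$ for $\chi$ versus $\tchi$, the $\omega$-limit for the third), the three inequalities follow at once.

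For \textbf{subadditivity} of $\tchi_R$ and $\tochi_R$, I would use that marginalization of a probability measure $\mu$ on $(H_N^R)^n$ onto its first $m$ coordinates (and onto the last $n-m$) is a measurable push-forward, under which entropy can only increase; concretely, $\text{Ent}(\mu)\leq\text{Ent}(\pi_*\mu)+\text{Ent}(\pi'_*\mu)$ by subadditivity of differential entropy for the two coordinate projections. Since $\tau_\mu\in V_{\epsilon,K}(\tau)$ forces the marginals $\pi_*\mu,\pi'_*\mu$ to have barycenters in $V_{\epsilon,K}(\tau_1),V_{\epsilon,K}(\tau_2)$ respectively, taking the supremum over $\mu$ and then the limit gives $\tchi_R(\tau)\leq\tchi_R(\tau_1)+\tchi_R(\tau_2)$ after matching the normalization constants $\frac{n}{2}\log N=\frac{m}{2}\log N+\frac{n-m}{2}\log N$. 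The main obstacle I anticipate is the concavity step: the constraint sets $V_{\epsilon,K}(\tau)$ are defined with a strict inequality and depend on $\tau$, so combining near-optimal measures for $\tau_1$ and $\tau_2$ produces a competitor obeying the constraints only up to an additive slack, and I must check carefully that this slack (and the fact that subadditivity of entropy holds exactly only for genuine marginals, not approximate ones) is harmless after the $\epsilon\to0$ limit. The $\limsup$ variant also requires care, since $\limsup$ is only subadditive, not additive, which is precisely why subadditivity rather than equality is claimed for $\tchi_R$ and $\tochi_R$ and why the $\liminf$ variant $\tuchi_R$ is not asserted to be subadditive.
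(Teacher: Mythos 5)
Your overall strategy is essentially the paper's own, modulo one level of translation: the paper proves concavity by combining states, noting that $\sigma_1\in V_{\epsilon,K}(\tau_1)$, $\sigma_2\in V_{\epsilon,K}(\tau_2)$ implies $\lambda\sigma_1+(1-\lambda)\sigma_2\in V_{\epsilon,K}(\lambda\tau_1+(1-\lambda)\tau_2)$ and invoking concavity of $\rho_{N,K}$ from (\ref{Legendre}), while you combine the approximating measures $\alpha\mu_1+(1-\alpha)\mu_2$ and invoke concavity of $\mathrm{Ent}$ together with affinity of $\mu\mapsto\tau_\mu$; these are the same argument. Your treatment of the domination inequalities (normalized Lebesgue measure on $\Gamma_R(\tau,\epsilon,K,N)$ as competitor) and of subadditivity (genuine coordinate marginals, subadditivity of classical entropy, splitting $\frac{n}{2}\log N$, and the remark that $\limsup$ and ultrafilter limits respect sums while $\liminf$ does not) is exactly what the paper does, spelled out. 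Two of your worries are unnecessary: no relaxation of $\epsilon$ is needed in the concavity step, since each strict moment constraint is preserved exactly under convex combination, $|\alpha\tau_{\mu_1}(m)+(1-\alpha)\tau_{\mu_2}(m)-\alpha\tau_1(m)-(1-\alpha)\tau_2(m)|<\epsilon$; and the marginals in the subadditivity step are genuine marginals of $\mu$, so no issue of ``approximate marginals'' arises.

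The one step whose justification, as written, would fail is upper semicontinuity. You appeal to the (true) lemma that a monotone decreasing limit of concave u.s.c.\ functions is concave and u.s.c., but its hypothesis is not satisfied: the pre-limit functions $F_{\epsilon,K}(\tau)=\liminf_N\bigl(\frac{1}{N^2}\sup_{\sigma\in V_{\epsilon,K}(\tau)}\rho_{N,K}(\sigma)+\frac{n}{2}\log N\bigr)$ are not known to be u.s.c. Indeed, for fixed $N$ the supremum over $V_{\epsilon,K}(\tau)$, a set cut out by finitely many \emph{strict} inequalities, is naturally \emph{lower} semicontinuous in $\tau$ (any $\sigma$ admissible for $\tau$ remains admissible for all nearby $\tau'$), and a $\liminf$ in $N$ does not create upper semicontinuity. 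The correct mechanism, which is what the paper's ``easy to check'' refers to, is the nesting of the neighborhoods: if $\tau'\in V_{\epsilon/2,K}(\tau)$ then $V_{\epsilon/2,K}(\tau')\subset V_{\epsilon,K}(\tau)$ by the triangle inequality, hence $F_{\epsilon/2,K}(\tau')\leq F_{\epsilon,K}(\tau)$, so that $\limsup_{\tau'\to\tau}\,\inf_{\epsilon>0}F_{\epsilon,K}(\tau')\leq F_{\epsilon,K}(\tau)$ for every $\epsilon$; taking the infimum over $\epsilon$ shows $\lim_{\epsilon\to 0}F_{\epsilon,K}$ is u.s.c.\ with no regularity assumption whatsoever on the $F_{\epsilon,K}$. (Concavity, on the other hand, does pass through the monotone limits in $\epsilon$ and $K$ exactly as you say.) With this repair your proof is complete.
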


   \begin{proof}  According to (\ref{Legendre}),
   {\gree we have} :
   $$\rho_{N,K}
   ({ \sigma})=
   \inf_{P\in\Bbb C_{sa}\langle
     X_1,\ldots,X_n\rangle\atop \text{deg}(P)\leq
     K}\left(\log I_N(P)+N^2{ \sigma}(P)\right)$$
   which is therefore  a concave upper semi-continuous
    function of $\sigma$.
   Let $\tau_1$ and $\tau_2$ be states, and let $\sigma_1\in
   V_{\epsilon,K}(\tau_1)$, $\sigma_2\in
   V_{\epsilon,K}(\tau_2)$, then 
   $$\lambda\sigma_1+(1-\lambda)\sigma_2\in
   V_{\epsilon,K}(\lambda\tau_1+(1-\lambda)\tau_2)$$
    therefore by concavity,
   $$\sup_{\sigma \in V_\epsilon(\lambda\tau_1+(1-\lambda)\tau_2)}\rho_{N,K}(\sigma)
   \geq \lambda\rho_{N,K}(\sigma_1)+(1-\lambda)\rho_{N,K}(\sigma_2).$$
    Since this is true for all
   $\sigma_1,\sigma_2$ {\gree we get} :
    $$\sup_{\sigma \in V_\epsilon(\lambda\tau_1+(1-\lambda)\tau_2)}\rho_{N,K}(\sigma)
   \geq \lambda\sup_{\sigma_1 \in V_\epsilon(\tau_1)}\rho_{N,K}(\sigma_1)
   +(1-\lambda)\sup_{\sigma_{ 2} \in V_\epsilon(\tau_2)}\rho_{N,K}(\sigma_2).$$
   
   {\gre The reader may have noted this is also a consequence of the expression of the { left} hand side as the entropy of Csiszar's I-projection on the set of measures having mean in $V_{\epsilon,K}(\lambda\tau_1+(1-\lambda)\tau_2)$.}
   Thus 
   $\sup_{\sigma\in V_{\epsilon,K}(\tau)}
   \rho_{N,K}(\sigma)$ is a concave function of $\tau$, and taking a liminf 
   we see that :
   $$\liminf_{N\to\infty}\left(\frac{1}{N^2}\left[\sup_{\sigma\in V_{\epsilon,K}(\tau)}
   \rho_{N,K}(\sigma)\right]+\frac{n}{2}\log N\right)$$
   is again concave in $\tau$.
   
   It is easy to  check that 
   taking the limit as $\epsilon $ goes to zero gives an upper
   semi-continuous function. Since it is nonincreasing in $K$, the limit as
   $K\to\infty$ is again
   concave and upper semi-continuous.
   
Subadditivity follows from the subadditivity of classical entropy.
Note that { we cannot deduce it} for the $\liminf$ variant, since in general
the inequality
$\liminf (a_n+b_n)\leq \liminf(a_n)+\liminf(b_n)$ fails. Of course if all variants of the free entropy actually coincide, subadditivity would follow in this case.
  \end{proof}
\begin{remark}
We notice that the state of maximal $\tilde 
\chi$ entropy in $\mathcal S_R^n$ is the distribution of a free family of 
arc-sine distributed self-adjoint operators, where the arcsine distribution is
on $[-R,R]$.  It corresponds to taking the limit of barycenters of normalized 
Lebesgue
measure on $(H_N^R)^n$. In particular, this quantity is finite. {\gre (The reader may also be referred to \cite{HP} section 5.6 for this finiteness.)}
\end{remark}

\begin{remark}\label{compVBH}
{ As the referee reminded us, Voiculescu suggested in \cite[section 7.1]{V1} several alternative definitions of free entropy. We discuss here the relation with our definition. The first variant $\chi^{(1)}(\tau)$ has been studied in \cite{B03} and the second variant $\chi^{(2)}(\tau)$ happens to be by definition exactly our $\tchi(\tau).$ The first part of this paper may thus be seen as a study of this suggestion of Voiculescu. 
Recall the definition :

$$\chi^{(1)}(\tau)=\sup_{R\geq \mathcal{R}(\tau)}\lim_{K\to\infty,\epsilon\to0}\limsup_{N\to\infty}\left(\frac{1}{N^2}\left[\sup_{{\mu \in P[(H_N^R)^{n}]\atop {\,E_\mu(|\frac{1}{N}Tr(P)-\tau(P)|)<\epsilon \atop \,\forall P\ \textrm{monomial},\ deg(P)\leq K}} }
   Ent(\mu)\right]+\frac{n}{2}\log N\right)$$
   
   In \cite{B03}, Belinschi proved $\chi^{(1)}(\tau)=\chi(\tau).$ for any $\tau\in \mathcal S_c^n$. We want to point out that the nonlinearity of the condition in  $\frac{1}{N}Tr(P)$ under law $\mu$ is the key why this equality is valid here (as in Hiai's second variant of entropy \cite[section 6]{Hi}). In the variant $\tchi(\tau)$ we only have a condition on $\tau_\mu$, and this allows us  to get a concavification,; this is also what makes it harder to prove equality with $\chi(\tau)$ in the factorial case.
   
   We may also compare our definition with the quantity obtained by  \cite{Hi} using the Legendre transform of  free pressure.
Define,  for $P=P^*\in \C\langle X_1,...,X_n\rangle$ :
   $$\pi_R(P)=\limsup_{N\to \infty} \frac{1}{N^2}\log I_N(P)+\frac{n}{2}\log N.$$
   Hiai defines the entropy by : $$\eta_R(\tau)=\inf_{P=P^*\in \C\langle X_1,...,X_n\rangle }\tau(P)+\pi_R(P).$$

By (\ref{maxent}), for any $P$ monomial of degree less than $K$, $\sigma\in V_{\epsilon,K}(\tau)$, we have : $$\frac{1}{N^2}\rho_{N,K}(\sigma)\leq \frac{1}{N^2}\log I_N(P)+\tau(P)+\epsilon.$$
Thus, taking a supremum, a limsup (or liminf), and then the limit in $\epsilon, K$, {\gree we get}~:
$$\tilde{\chi}_{R}(\tau)\leq \tau(P)+\pi_R(P),$$

so that taking an infemum over $P$ {\gree we also get}~:
$$\tilde{\chi}_{R}(\tau)\leq \eta_R(\tau).$$

We don't know when there is actually an equality, but in the one variable case $(n=1)$, it is known $\eta_R(\tau)=\chi(\tau)$ and thus $\eta_R(\tau)=\chi(\tau)=\tchi(\tau)=\tuchi(\tau)$ for $R$ large enough.
}
\end{remark}

{ In this article, we mainly study $\tilde{\underline{\chi}}_R(\tau)$ instead of  $\tilde{\underline{\chi}}(\tau)$. This is motivated by the following result, really similar to \cite[Proposition 2.4]{V1}.

\begin{proposition}
Consider $\tau \in \mathcal S_c^n$. For any $T>R>\mathcal{R}(\tau)$ we have :
$$\tuchi_T(\tau)=\tuchi_R(\tau)=\tuchi(\tau),\ \ \ \tchi_T(\tau)=\tchi_R(\tau)=\tchi(\tau), \ \ \ \tochi_T(\tau)=\tochi_R(\tau)=\tochi(\tau).$$
\end{proposition}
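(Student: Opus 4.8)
The plan is to reduce everything to the two inequalities $\tuchi_R(\tau)\le\tuchi_T(\tau)$ and $\tuchi_T(\tau)\le\tuchi_R(\tau)$ for $T>R>\mathcal R(\tau)$, to carry them out with the measure form of Definition \ref{ExtDef}, and then to read off the statement for $\tuchi(\tau)$ from its definition as a supremum; the $\tchi$ and $\tochi$ cases will follow by the identical estimate since only the $N$-limit ($\liminf$, $\limsup$ or $\lim_{N\to\omega}$) differs. Writing $B_R(\epsilon,K,N)=\frac1{N^2}\sup\{\text{Ent}(\mu):\mu\in P[(H_N^R)^n],\ \tau_\mu\in V_{\epsilon,K}(\tau)\}+\frac n2\log N$, the inequality $\tuchi_R\le\tuchi_T$ is immediate: any $\mu\in P[(H_N^R)^n]$ with $\tau_\mu\in V_{\epsilon,K}(\tau)$ in $\mathcal S_R^n$ is also an element of $P[(H_N^T)^n]$, its barycenter lies in $\mathcal S_R^n\subset\mathcal S_T^n$ and obeys the same monomial constraints, and $\text{Ent}(\mu)$ is unaffected by enlarging the ambient domain; hence $B_R(\epsilon,K,N)\le B_T(\epsilon,K,N)$ and the limits inherit the inequality. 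In particular $R\mapsto\tuchi_R(\tau)$ is nondecreasing, so once the reverse inequality is established the common value on $(\mathcal R(\tau),\infty)$ equals $\sup_{R\ge\mathcal R(\tau)}\tuchi_R(\tau)=\tuchi(\tau)$.

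For the reverse inequality I would use a truncation argument at the level of measures, in the spirit of Voiculescu's Proposition 2.4. Fix $R'$ with $\mathcal R(\tau)<R'<R$ and a smooth increasing diffeomorphism $\psi:[-T,T]\to[-R,R]$ with $\psi=\mathrm{id}$ on $[-R',R']$ and $0<c\le\psi'\le1$. Applying $\psi$ by functional calculus in each variable defines a diffeomorphism $\Phi:(H_N^T)^n\to(H_N^R)^n$; for $\mu$ with $\tau_\mu\in V_{\epsilon,K}(\tau)$ in $\mathcal S_T^n$ (and finite entropy, hence absolutely continuous) set $\mu'=\Phi_*\mu\in P[(H_N^R)^n]$. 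First, the barycenter moves little: since $\psi=\mathrm{id}$ on $[-R',R']$, the matrix $\psi(M_i)-M_i$ is supported on the eigenspaces with $|\lambda_j(M_i)|>R'$, so for a monomial $m$ of degree $<K$ the bound $|Tr(AB)|\le\|A\|_\infty\|B\|_1$ gives $|\frac1NTr\,m(\psi(M))-\frac1NTr\,m(M)|\le \frac{C}{N}\max_i\#\{j:|\lambda_j(M_i)|>R'\}$. Averaging over $\mu$ and using $E_\mu[\frac1NTr(M_i^{2k})]=\tau_\mu(X_i^{2k})\le\mathcal R(\tau)^{2k}+\epsilon$ for $2k<K$ together with Markov's inequality yields $E_\mu[\frac1N\#\{|\lambda_j(M_i)|>R'\}]\le(\mathcal R(\tau)/R')^{2k}+\epsilon/(R')^{2k}=:\eta(\epsilon,K)\to0$ as $K\to\infty,\epsilon\to0$. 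Hence $\tau_{\mu'}\in V_{\epsilon+C\eta,K}(\tau)$ in $\mathcal S_R^n$.

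The main obstacle is the second point: the entropy must not drop by more than $o(N^2)$. As $\Phi$ is a diffeomorphism, $\text{Ent}(\mu')=\text{Ent}(\mu)+E_\mu[\log|\det D\Phi|]$, and the Jacobian of functional calculus on Hermitian matrices is explicit, its logarithm at a matrix with eigenvalues $\lambda_1,\dots,\lambda_N$ being $\sum_j\log\psi'(\lambda_j)+2\sum_{j<k}\log\frac{\psi(\lambda_j)-\psi(\lambda_k)}{\lambda_j-\lambda_k}$, summed over the $n$ variables. Because $\psi=\mathrm{id}$ on $[-R',R']$, each summand vanishes unless at least one of the indices is an outlier ($|\lambda|>R'$); there are at most $N\cdot\#\{\text{outliers}\}$ active pairs, and on them each term lies in $[2\log c,0]$, while the diagonal terms are likewise $\ge\log c$ and nonzero only at outliers. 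The delicate feature is precisely that although $\sim N^2$ pairs enter the Jacobian, only an $\eta$-fraction are active, so
$$E_\mu[\log|\det D\Phi|]\ \ge\ -C'\,N\cdot E_\mu[\#\{\text{outliers}\}]\ \ge\ -C'\,\eta(\epsilon,K)\,N^2,$$
which makes the truncation asymptotically entropy-preserving.

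Combining the two points, $B_R(\epsilon+C\eta,K,N)\ge B_T(\epsilon,K,N)-C'\eta(\epsilon,K)$ for every $N$, since the supremum defining $B_R$ sees in particular the measures $\mu'$. Taking $\liminf_N$ (respectively $\limsup_N$ or $\lim_{N\to\omega}$) preserves this inequality because the error term $C'\eta$ is $N$-independent, and then letting $K\to\infty,\epsilon\to0$ sends both $\eta\to0$ and $\epsilon+C\eta\to0$, so by the monotonicity of these quantities in $(\epsilon,K)$ the left side converges to $\tuchi_R(\tau)$ and the right to $\tuchi_T(\tau)$; hence $\tuchi_T(\tau)\le\tuchi_R(\tau)$. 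Together with the easy direction this gives $\tuchi_T(\tau)=\tuchi_R(\tau)$ for all $T>R>\mathcal R(\tau)$, and the monotonicity noted above identifies this value with $\tuchi(\tau)$. Running the same estimate with $\limsup_N$ and with $\lim_{N\to\omega}$ yields the corresponding identities for $\tchi$ and $\tochi$.
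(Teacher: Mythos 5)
Your strategy is the same as the paper's: the easy inclusion gives $\tuchi_R(\tau)\le\tuchi_T(\tau)$, and the converse is proved by pushing approximating measures from $(H_N^T)^n$ to $(H_N^R)^n$ through functional calculus of a truncation which is the identity on a smaller interval, controlling separately the displacement of the barycenter and the Jacobian cost in entropy, both in terms of the expected fraction of outlier eigenvalues. Your substantive estimates are correct: Markov's inequality on even moments of $\tau_\mu$ (the paper gets the analogous control via spectral projections and Chebyshev, following Voiculescu's Proposition 2.4), the log-Jacobian formula and its bound through the count of active eigenvalue pairs, and the identification of the common value with $\tuchi(\tau)$ by monotonicity in $R$.

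The gap is in the final limit, i.e.\ in the parameter bookkeeping. The constant $C$ in your inclusion $\tau_{\mu'}\in V_{\epsilon+C\eta,K}(\tau)$ cannot be taken independent of $K$: telescoping a monomial of degree $<K$ whose factors have norm $\le T$ forces $C\sim KT^{K}$. On the other hand $\eta(\epsilon,K)$ is at best of order $(\mathcal R(\tau)/R')^{K}$, since the Markov bound only uses moments of degree $<K$. Hence $C\,\eta(\epsilon,K)\gtrsim K\left(T\mathcal R(\tau)/R'\right)^{K}$, which diverges rather than tending to $0$ as soon as $T\mathcal R(\tau)>R'$ --- and this cannot be avoided, since the statement must hold for arbitrarily large $T$. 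So the assertion that ``letting $K\to\infty,\epsilon\to0$ sends $\epsilon+C\eta\to0$'' is false, and your chain of inequalities then only bounds $B_R$ at approximation levels that never refine, which gives no comparison with $\tuchi_R(\tau)$ (defined as an infimum over refining levels). The repair is the standard decoupling of approximation levels, and it is exactly what the paper does (its $\epsilon_1<\epsilon/2$, $K_1>K$, with the explicit threshold $\delta_2<\epsilon/2KT^{K-1}$ exhibiting the same $K$-dependent constant): fix the target pair $(\epsilon,K)$ first, then require the input measure to approximate $\tau$ at a finer level $(\epsilon_1,K_1)$ with $K_1\ge K$, $\epsilon_1\le\epsilon/2$, chosen so that $C(K,T)\,\eta(\epsilon_1,K_1)<\epsilon/2$; this is possible because $C(K,T)$ is now a fixed number and $\eta(\epsilon_1,K_1)\to0$. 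This yields $B_R(\epsilon,K,N)\ge B_T(\epsilon_1,K_1,N)-C'\eta(\epsilon_1,K_1)$ with $C'$ independent of all parameters, whence $\liminf_N B_R(\epsilon,K,N)\ge\tuchi_T(\tau)-C'\eta(\epsilon_1,K_1)$; refining $(\epsilon_1,K_1)$ and then taking the infimum over $(\epsilon,K)$ gives $\tuchi_T(\tau)\le\tuchi_R(\tau)$. With this correction your argument is complete and coincides in substance with the paper's proof.
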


\begin{proof}
We only prove the $\liminf$ variant, and of course it will suffice to prove for $T>R>\mathcal{R}(\tau)$, $\tuchi_T(\tau)\leq \tuchi_R(\tau)$ (the other inequality is obvious).
Let  $S=\frac{\mathcal{R}(\tau)+R}{2}$.
Define the continuous piecewise linear function $h:[-T,T]\to \R$ by $h(t)=\alpha$ for $t\in [-T,-R]\cup[R,T]$, $h(t)=1$ for $t\in [-S,S]$,   $h(t)=\alpha+(1-\alpha)\frac{t+R}{R-S}$ the linear interpolation for $t\in [-R,-S]$ and $h(t)=\alpha+(1-\alpha)\frac{-t+R}{R-S}, $ with $\alpha=\frac{R-S}{2T-(R+S)}<1$ since $T>R.$ 

In this way, if we define a continuous increasing function $g:[-T,T]\to [-R,R]$
by $g(t)=-R+\int_{-T}^th(s)ds$ we have $g(T)=R$, $g(t)=t$ for $t\in[-S,S]$ and $g'(t)\in[\alpha,1].$ Let also $G:(H_N^T)^n\to (H_N^R)^n$ defined by $G(A_1,...,A_n)=(g(A_1),...,g(A_n)).$ Especially for a state $\tau\in \mathcal S_T^n$, {\gree we get} a state $G_{*}\tau\in \mathcal S_R^n$, so that $\tau_{G_{*}\mu}=G_{*}\tau_\mu,$ defined by : $$(G_{*}\tau)(P(X_1,...,X_n))=\tau(P(g(X_1),...,g(X_n))).$$ 

Fix $\epsilon>0,K\in \N^*,\tau\in \mathcal S_T^n$, we will choose $\delta_1,\delta_2>0$ small enough later.
First, as in the proof of \cite[Proposition 2.4]{V1}, {\gree we get} $0<\epsilon_1<\epsilon/2$, $K_1>K$ such that for any $\sigma\in V_{\epsilon_1,K_1}(\tau)\cap\mathcal S_T^n$ (with $E(X_j,B)$ the spectral projection of the self-adjoint element $X_j$ (computed in its GNS representation) on the set $B\subset \R$) :
$$\sigma(E(X_j,[-T,-S]\cup [S,T]))\leq \delta_1\delta_2,$$
$$\sigma(|g(X_j)-X_j|)\leq \delta_2.$$

This implies $G_{*}\sigma\in V_{\epsilon,K}(\tau\cap\mathcal S_R^n)$, for $\delta_2$ small enough (e.g. $\delta_2<\epsilon/2KT^{K-1}$).

Consider $\mu \in P[(H_N^R)^{n}]$ such that $\tau_\mu\in
 V_{\epsilon_1,K_1}(\tau),$ we can estimate by Chebyshev's inequality :
 
 $$P_\mu(\frac{1}{N}Tr(E(X_j,[-T,-S]\cup [S,T]))\geq \delta_2)\leq \frac{E_\mu(\frac{1}{N}Tr(E(X_j,[-T,-S]\cup [S,T]))}{\delta_2}\leq \delta_1.$$

 We can also compute $\frac{dG_*\mu}{dLeb}=(\frac{d\mu}{dLeb}\circ G^{-1})\times |det(Jac(G^{-1}))|$. If we write $\partial g$ the two variable function $\partial g(A,B)=(g(A)-g(B))/(A-B), A\neq B$ extended by $\partial g(B,B)=g'(B)$ on the diagonal, the jacobian of $g$ is given by $\partial g$ applied by functional calculus so that :
 
 $$Ent(G_*\mu)=Ent(\mu)+\sum_jE_\mu(\frac{1}{2}(Tr\otimes Tr)(\log |\partial g(X_j\otimes 1,1\otimes X_j)|^2)).$$
In the proof of \cite[Proposition 2.4]{V1}, Voiculescu showed that, 
 for a matrix $X_j\in (H_N^T)^{n}$ such that $\frac{1}{N}Tr(E(X_j,[-T,-S]\cup [S,T]))\leq \delta_2$, the positive determinant of the jacobian of $g$ is bounded bellow so that :  $$\left|\frac{1}{2}(Tr\otimes Tr)(\log |\partial g(X_j\otimes 1,1\otimes X_j)|^2)\right|\leq(N+N^2-(N(1-\delta_2))^2)|\log \alpha| .$$
 Moreover for any matrix $X_j\in (H_N^R)^{n}$, {\gree we have} : $\left|\frac{1}{2}(Tr\otimes Tr)(\log |\partial g(X_j\otimes 1,1\otimes X_j)|^2)\right|\leq N^2|\log \alpha| .$
 
 As a consequence, we get :
 
 $$Ent(G_*\mu)\geq Ent(\mu)-n(N+N^2(2\delta_2-\delta_2^2))|\log \alpha|-n\delta_1N^2|\log \alpha|.$$
 
 Taking suprema and liminf, {\gree we get} :
 \begin{align*}&\liminf_{N\to\infty}\left(\frac{1}{N^2}\left[\sup_{{\nu \in P[(H_N^R)^{n}]\atop \,\tau_\nu\in
 V_{\epsilon,K}(\tau)} }
   \rho_{N,K}(\sigma)\right]+\frac{n}{2}\log N\right)\\ &\geq\liminf_{N\to\infty}\left(\frac{1}{N^2}\left[\sup_{{\mu \in P[(H_N^T)^{n}]\atop \,\tau_\mu\in
 V_{\epsilon_1,K_1}(\tau)} }
   \rho_{N,K}(\sigma)\right]+\frac{n}{2}\log N\right) + n(2\delta_2-\delta_2^2)\log \alpha+n\delta_1\log \alpha.
   \end{align*}

 Since $\delta_1,\delta_2$ can be made arbitrarily small choosing $\epsilon_1,K_1$, {\gree we get} the desired inequality.
\end{proof}

}
\section{A preliminary separation result}
In order to prove that Voiculescu's entropy coincides with its modification on extremal states, we will need a separation result. We gather here references to the literature.
Recall that for $K$ a convex subset of the dual $E^{*}$ of a { complex} topological vector space, an $x\in E$ is said to expose $f$ in  $K$ if $f\in K$ and $\Re g(x) < \Re f(x)$ for all $g\in K$ other than $f$. Those $f$ which are so exposed by elements of $E$ are weak-* exposed points of $K$. We now state  a result of Sidney \cite{S} (attributed by Asplund to Bishop  in the Banach space case)

\begin{proposition}
Let $E$ be a separable Frechet space and $K$ a non-empty convex weak* compact subset of its topological dual $E^{*}$. Then $K$ is the weak* closed convex hull of the set of its weak* exposed points (this set is thus non empty).
\end{proposition}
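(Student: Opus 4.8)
The plan is to recognize this as a Krein--Milman type statement with \emph{exposed} points in place of extreme points, and to prove it by combining a Hahn--Banach separation argument with a Mazur-type dense differentiability theorem for the support function. Throughout I would pass to the underlying real Fréchet space, which is again separable, and replace every functional by its real part, so that $K$ stays a weak* compact convex set and ``$x\in E$ exposes $f$'' means precisely that $\Re f(x)$ attains its maximum over $K$ uniquely at $f$.

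First I would introduce the support function $h(x)=\sup_{f\in K}\Re f(x)$ on $E$. For fixed $x$ the map $f\mapsto \Re f(x)$ is weak* continuous on the compact set $K$, so $h(x)$ is finite; as a supremum of the $E$-continuous linear functionals $x\mapsto\Re f(x)$, $f\in K$, it is convex and lower semicontinuous, and a finite lower semicontinuous convex function on a Fréchet (hence Baire) space is automatically continuous. The decisive observation is that the face of $K$ exposed by $x$ is exactly the subdifferential $\partial h(x)=\{f\in K:\Re f(x)=h(x)\}$, so that $x$ exposes a (necessarily unique) point of $K$ precisely when $h$ is Gateaux differentiable at $x$; indeed Gateaux differentiability forces $\partial h(x)$ to be a singleton, i.e.\ a unique maximizer.

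The heart of the argument, and the step I expect to be the main obstacle, is a Mazur-type theorem: a continuous convex function on a separable Fréchet space is Gateaux differentiable on a dense $G_\delta$ subset $D\subseteq E$. In the separable Banach case this is classical, via Baire category applied to the one-sided directional derivatives along a countable dense set of directions; in the Fréchet setting one must handle the countable family of seminorms defining the topology and re-run the category argument along a countable dense family of directions, verifying that the set on which a given directional derivative fails to be two-sided is meager. Granting this, every $x\in D$ exposes a point $f_x\in K$, so exposed points are abundant.

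Finally I would conclude by separation. Let $C$ be the weak* closed convex hull of the exposed points, so $C\subseteq K$. If $C\neq K$, choose $f_0\in K\setminus C$; since the dual of $(E^{*},\text{weak*})$ is canonically $E$, Hahn--Banach yields $x_0\in E$ and $\alpha$ with $\Re f_0(x_0)>\alpha>\sup_{g\in C}\Re g(x_0)$, whence $h(x_0)\geq\Re f_0(x_0)>\alpha$. By continuity of $h$ and of the support function of $C$, both strict inequalities persist for all $x$ in a neighbourhood of $x_0$; choosing such an $x\in D$, the exposed point $f_x$ satisfies $\Re f_x(x)=h(x)>\alpha$, while $f_x\in C$ forces $\Re f_x(x)\leq\sup_{g\in C}\Re g(x)<\alpha$, a contradiction. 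Hence $K=C$ is the weak* closed convex hull of its exposed points, and in particular this set is non-empty.
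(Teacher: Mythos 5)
Your proposal is correct, but note that the paper offers no proof of this proposition at all: it is quoted verbatim from Sidney \cite{S} (with the Banach-space case attributed by Asplund to Bishop), so there is no internal argument to compare against. Your route --- support function $h=\sigma_K$, continuity of $h$ via the Baire property of Fr\'echet spaces, the identification of the exposed face with the set of maximizers/subgradients so that Gateaux differentiability points of $h$ yield weak* exposed points, a Mazur-type generic differentiability theorem, and finally Hahn--Banach separation of a putative point of $K\setminus C$ --- is precisely the classical proof scheme for results of this type, and is presumably what the cited reference contains. You correctly isolate the only step that is not routine in passing from Banach to Fr\'echet spaces, namely Mazur's theorem; for completeness, the ingredients you would need are: (a) $x\mapsto d^{+}h(x;v)$ is upper semicontinuous (it is the infimum over $t>0$ of the monotone difference quotients, each continuous), so each set where differentiability along $v$ fails by at least $1/n$ is closed; (b) restriction of $h$ to lines shows these sets have empty interior, an argument needing no norm; (c) a countable dense set of directions plus the Baire property gives the dense $G_\delta$ set $D$; and (d) a continuous convex function on a Fr\'echet space is locally Lipschitz with respect to a continuous seminorm, which lets you pass from two-sided differentiability along the dense set of directions to full Gateaux differentiability, the derivative being sublinear, odd, hence linear, and seminorm-bounded, hence continuous. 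With these details filled in, your separation argument (using that $C\neq\emptyset$ is already guaranteed by the Mazur step, and that $\sigma_C$ is continuous by the same Baire argument applied to the weak* compact set $C$) closes the proof; the whole argument is sound.
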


Since it is proved in \cite{D} that $\mathcal S_R^n$ is a Poulsen simplex, we will use the following result \cite{L} of homogeneity.

\begin{proposition}
                     Let $S_{1}$ and $S_{2}$ be metrizable simplices with
$\overline{Ext\ S_{i}}= S_{i}$ (i.e. Poulsen simplices), for i = 1,2. Let $F_{i}$ be a proper closed face of $S_{i}$, i = 1,2,
and let $\varphi$ be an affine homeomorphism which maps $F_{2}$ onto $F_{1}$.
    Then $\varphi$ can be extended to an affine homeomorphism which
maps $S_{2}$ onto $S_{1}$.
\end{proposition}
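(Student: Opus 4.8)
The statement is the homogeneity theorem for the Poulsen simplex of Lindenstrauss, Olsen and Sternfeld \cite{L}; I sketch the approach one would take. The plan is to pass from the simplices to their spaces of continuous affine functions and to build the extension by a back-and-forth argument. For a metrizable Choquet simplex $S$ write $A(S)$ for the order-unit Banach space of real continuous affine functions on $S$, with order unit the constant function $1$. By Kadison duality the assignment $S\mapsto A(S)$ is a contravariant equivalence: an affine homeomorphism $\psi:S_2\to S_1$ corresponds exactly to a unital (hence isometric) order-isomorphism $\psi^{*}:A(S_1)\to A(S_2)$, $\psi^{*}f=f\circ\psi$. A proper closed face $F\subset S$ is a split face, so the restriction map $r:A(S)\to A(F)$ is a surjective unital order-quotient (affine functions on a closed face of a simplex extend to the whole simplex). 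Thus the data of the proposition translate into a unital order-isomorphism $\varphi^{*}:A(F_1)\to A(F_2)$, and the goal becomes the construction of a unital order-isomorphism $\Phi:A(S_1)\to A(S_2)$ with $r_2\circ\Phi=\varphi^{*}\circ r_1$; dualizing $\Phi$ and passing to state spaces then yields the sought extension $S_2\to S_1$ whose restriction to $F_2$ is $\varphi$.

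First I would isolate the feature of the Poulsen simplex that makes extension possible, as an approximate one-step extension lemma. Concretely: given finite-dimensional subspaces $V_i\subset A(S_i)$ containing $1$, an $\epsilon$-isometric unital order-isomorphism $u:V_1\to V_2$ compatible with the face restrictions (i.e. $r_2\circ u$ agrees with $\varphi^{*}\circ r_1$ on $V_1$ up to $\epsilon$), and one extra function $g\in A(S_1)$, there is an extension of $u$ to $V_1+\mathbb{R}g$ that is again $\epsilon'$-isometric, unital, order-compatible and face-compatible. The point is that on a Poulsen simplex the positive cone of $A(S)$ is \emph{rich}: because the extreme points are dense, every finite consistent system of strict affine inequalities — consistent on $F$ and on a dense set of extreme points — can be solved, so the finitely many norm and order constraints imposed by adjoining $g$ on the $A(S_1)$ side can be matched on the $A(S_2)$ side. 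This is an amalgamation statement over the common face $F$, reflecting the fact that the finite-dimensional simplices form a Fra\"iss\'e class whose limit is the Poulsen simplex.

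With the one-step lemma in hand, the extension is produced by a back-and-forth construction. I would fix countable dense sequences $(f_k)$ in $A(S_1)$ and $(h_k)$ in $A(S_2)$ and build an increasing chain of finite-dimensional subspaces together with approximate partial isomorphisms, at odd stages forcing some $f_k$ into the domain and at even stages forcing some $h_k$ into the range, each time invoking the one-step lemma with error $\epsilon_k$ chosen so that $\sum_k\epsilon_k<\infty$. Completeness of $A(S_i)$ together with the summable errors guarantees that the partial maps converge pointwise to a genuine unital isometric order-isomorphism $\Phi:A(S_1)\to A(S_2)$, surjective by the alternation and face-compatible in the limit. Dualizing $\Phi$ gives the required affine homeomorphism.

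The main obstacle is the one-step extension lemma. One must simultaneously control the isometry constant, preserve positivity, keep the unit fixed, and maintain agreement with $\varphi^{*}$ on the face $F$, all while keeping the accumulated error summable — and it is precisely here that the Poulsen hypothesis (density of extreme points, hence flexibility of the order structure and the amalgamation property over $F$) must be used quantitatively rather than merely qualitatively.
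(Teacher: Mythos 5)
The first thing to note is that the paper does not prove this proposition at all: it is quoted as a known homogeneity result of Lindenstrauss, Olsen and Sternfeld \cite{L}, and is used as a black box (together with Sidney's exposed-point theorem) to deduce Corollary \ref{sep}. So there is no in-paper argument to compare against; the relevant comparison is with the proof in \cite{L}, and your outline --- pass to the order-unit spaces $A(S_i)$ by Kadison duality, use extendability of continuous affine functions from a closed face (closed faces of metrizable simplices are split, and Lazar's extension theorem applies), and run a back-and-forth construction of approximate unital order-isomorphisms with summable errors, exploiting density of extreme points at each step --- is indeed the general shape of that argument; in \cite{L} the ``finite-dimensional subspaces'' appear concretely as representations of $A(S)$ as closures of increasing unions of finite-dimensional $\ell^{\infty}$-spaces via representing matrices and peaked partitions of unity.

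As a proof, however, your proposal has a genuine gap, and you name it yourself: everything is reduced to the approximate one-step extension lemma, and that lemma is exactly where the entire content of the theorem lies; it is asserted, not proved. The justification offered --- the positive cone is ``rich'' because extreme points are dense, so finite consistent systems of strict affine inequalities can be solved --- is a heuristic, not an argument. One must actually produce, on the $A(S_2)$ side, a function matching finitely many prescribed norm and order constraints, \emph{and} a prescribed restriction to the face $F_2$ (so the amalgamation is over $F$, not free), \emph{and} with quantitative error control making the accumulated errors summable, so that the limiting map is a surjective isometric order-isomorphism rather than merely an into-embedding, and agrees with $\varphi^{*}$ on $A(F_1)$ exactly, not approximately. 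Each of these requirements individually is plausible from density of extreme points, but their simultaneous quantitative verification is the hard technical core of \cite{L}, and nothing in the proposal addresses it. So the proposal correctly identifies the strategy but is scaffolding around an unproved central lemma, and does not constitute a proof.
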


Applying those two results, the second to move any extremal point to a weak-* exposed point, which exists via the first result, one easily gets :

 \begin{proposition}
                      Let $E$ be a separable Fr\'{e}chet space and $K$ a non-empty convex weak* compact subset of its topological dual $E^{*}$, which is a Poulsen simplex. Then any extreme point of $K$ is a weak* exposed point.
\end{proposition}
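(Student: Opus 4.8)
The plan is to carry out exactly the recipe indicated just before the statement: use the first (Sidney--Bishop) proposition to produce a single weak* exposed point, and use the homogeneity of the Poulsen simplex to transport it onto an arbitrary extreme point by an affine homeomorphism of $K$. Concretely, I would first invoke the first proposition, which guarantees that the set of weak* exposed points of $K$ is non-empty; fix such a point $p_0$, exposed by some $x_0\in E$, so that $\Re\langle x_0,p_0\rangle>\Re\langle x_0,g\rangle$ for every $g\in K\setminus\{p_0\}$. Now let $e$ be an arbitrary extreme point of $K$. Since $e$ and $p_0$ are extreme, the singletons $\{e\}$ and $\{p_0\}$ are closed faces of $K$ (if $\lambda x+(1-\lambda)y$ equals an extreme point with $\lambda\in(0,1)$, then $x=y$ equals that point), and they are proper because a Poulsen simplex has densely many, hence more than one, extreme point. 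The one-point map $p_0\mapsto e$ is trivially an affine homeomorphism between these two proper closed faces, so the homogeneity proposition (applied with $S_1=S_2=K$) extends it to an affine weak* homeomorphism $\Phi:K\to K$ with $\Phi(p_0)=e$.

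It then remains to see that $e=\Phi(p_0)$ inherits from $p_0$ the property of being weak* exposed; equivalently, that affine weak* homeomorphisms of $K$ preserve weak* exposed points. The natural candidate for an exposing function at $e$ is obtained by pulling $x_0$ back through $\Phi^{-1}$: the map $F:=\Re\langle x_0,\cdot\rangle\circ\Phi^{-1}$ is weak* continuous and affine on $K$, and because $\Phi^{-1}$ is a bijection carrying $e$ to $p_0$ we have $F(e)=\Re\langle x_0,p_0\rangle>\Re\langle x_0,\Phi^{-1}(g)\rangle=F(g)$ for every $g\neq e$. Thus $e$ is exposed among the points of $K$ by the weak* continuous affine function $F$.

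The main obstacle is the final upgrade: the definition of a weak* exposed point requires the exposing function to be (the real part of) an element of $E$, whereas $F$ is a priori only a weak* continuous affine function on $K$. To close this gap I would argue that $\Phi^{-1}$ extends to a weak* continuous affine self-map $\bar\Phi^{-1}$ of $E^{*}$. This should use that $E$ is separable and $K$ is weak* compact: each coordinate $g\mapsto\langle x,\Phi^{-1}(g)\rangle$, $x\in E$, is a weak* continuous affine function on $K$, and for such a weak* compact convex set these functions are restrictions of $E$-affine functions, so one recovers a continuous linear $T:E\to E$ and a vector $v_0\in E^{*}$ with $\langle x,\bar\Phi^{-1}(g)\rangle=\langle Tx,g\rangle+\langle x,v_0\rangle$. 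Then $F=\Re\langle x_0,\bar\Phi^{-1}(\cdot)\rangle=\Re\langle Tx_0,\cdot\rangle+\Re\langle x_0,v_0\rangle$, so $y:=Tx_0\in E$ exposes $e$ (the additive constant being irrelevant), which is exactly what the definition demands. I expect the verification that $\Phi$ (equivalently $F$) is compatible with the predual $E$ — that the abstract affine homeomorphism supplied by the homogeneity theorem respects the weak* linear structure of $E^{*}$, so that the transported exposing functional is again an element of $E$ — to be the only delicate point; the existence of $p_0$, the face argument, and the formal transport of the exposing inequality are all routine.
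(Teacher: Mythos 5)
Your strategy is exactly the paper's: the paper's entire proof is the one-sentence combination of Sidney's theorem (to get one weak* exposed point $p_0$) with the Lindenstrauss--Olsen--Sternfeld homogeneity theorem (to move $p_0$ onto an arbitrary extreme point $e$), and you have correctly isolated the step this combination leaves implicit, namely that the pulled-back function $F=\Re\langle x_0,\cdot\rangle\circ\Phi^{-1}$ is a priori only a weak* continuous affine function on $K$, whereas weak* exposedness requires an exposing element of $E$. The problem is that the lemma you invoke to close this gap is false: it is not true that every weak* continuous affine function on a weak* compact convex subset of $E^{*}$ ($E$ separable) is the restriction of $\Re\langle x,\cdot\rangle+c$ with $x\in E$. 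A counterexample: take $E=c_0$, $E^{*}=\ell^1$, $K=\{x\in\ell^1\ :\ x_n\geq 0,\ \sum_n 2^n x_n\leq 1\}$ and $F(x)=\sum_n x_n$. Then $K$ is weak* compact, metrizable, and even a Choquet (Bauer) simplex (it is affinely weak* homeomorphic to the sub-probability sequences, i.e.\ to the probability measures on $\mathbb{N}\cup\{\infty\}$); $F$ is affine and weak* continuous on $K$, since on $K$ one has $\sum_{n>N}x_n\leq 2^{-(N+1)}$, so the weak* continuous partial sums converge to $F$ uniformly on $K$; yet if $F=\langle y,\cdot\rangle+c$ on $K$ with $y\in c_0$, evaluating at $0$ forces $c=0$ and evaluating at $2^{-n}e_n$ forces $y_n=1$ for all $n$, contradicting $y\in c_0$. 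So neither separability, nor compactness, nor metrizability, nor even the simplex property yields the extension you need: the restriction map $E\oplus\mathbb{R}\to A(K)$ has dense range (this much is classical), but it is not surjective, and uniform approximation does not preserve the property of having a unique maximizer, so the gap cannot be closed by density either.

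Consequently the recovery of a continuous linear $T:E\to E$ and of the exposing vector $y=Tx_0$ collapses. What your argument does prove is that every extreme point of $K$ is exposed by some element of $A(K)$ (this property is invariant under affine homeomorphisms of $K$, precisely because $A(K)\circ\Phi=A(K)$), but the proposition asserts the stronger, embedding-dependent statement that every extreme point is exposed by an element of $E$ itself; bridging $A(K)$-exposedness and $E$-exposedness is the real content here and needs an idea beyond what you (or, for that matter, the paper's own one-line proof) supply. So the proposal, as written, has a genuine gap at its final and decisive step.
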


\begin{corollary}\label{sep}
Let $\tau$ be an extremal state in $\mathcal S_R^n,\ { n>1}$, and $\epsilon>0$. For any $\eta>0$, there exists a self adjoint polynomial 
$Q_{\eta}\in\Bbb C\langle X_1,\ldots, X_n\rangle$ 
such that for every $\sigma\in \mathcal S_R^n$ {\gree we have}~:
$$\tau(Q_{\eta})>\sigma(Q_{\eta})-\eta,$$ 
and for all $\sigma\notin V_{\epsilon,K}(\tau)$ one
has~: $$\sigma(Q_{\eta})<\tau(Q_{\eta})-1.$$
\end{corollary}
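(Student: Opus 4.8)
The plan is to turn the qualitative exposed-point statement of the preceding proposition into the desired quantitative separation by a compactness argument together with a polynomial approximation. Apply that proposition with $E$ the separable $C^*$-algebra $A=*_{i=1}^n C([-R,R])$ (a Banach, hence Fr\'echet, space) and $K=\mathcal S_R^n\subset A^*$; by \cite{D} this $K$ is a Poulsen simplex, and since $n>1$ the state $\tau$ is an extreme, hence weak-* exposed, point. Thus there is $x\in A$ with $\Re\,\sigma(x)<\Re\,\tau(x)$ for every $\sigma\in\mathcal S_R^n$ with $\sigma\neq\tau$. Replacing $x$ by its self-adjoint part $x_0=\frac12(x+x^*)$ and using $\sigma(x_0)=\Re\,\sigma(x)$ for trace states, I get a self-adjoint $x_0\in A$ with $\tau(x_0)-\sigma(x_0)\geq 0$ for all $\sigma$, the inequality being strict unless $\sigma=\tau$.

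Next I would promote this pointwise strict inequality to a uniform gap away from the neighbourhood. The set $\mathcal S_R^n\setminus V_{\epsilon,K}(\tau)$ is weak-* closed in the compact simplex $\mathcal S_R^n$, hence compact, and it avoids $\tau$; so the continuous function $\sigma\mapsto\tau(x_0)-\sigma(x_0)$ attains on it a strictly positive minimum $m>0$, giving $\tau(x_0)-\sigma(x_0)\geq m$ for all $\sigma\notin V_{\epsilon,K}(\tau)$. Since $x_0$ is only an element of $A$ while we need a polynomial, I would then use the density of $\Bbb C\langle X_1,\ldots,X_n\rangle$ in $A$ (Stone--Weierstrass on each free factor $C([-R,R])$) to pick a self-adjoint polynomial $P$ with $\|x_0-P\|<\delta$, for a $\delta$ fixed below. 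As every trace state has norm one, $|\sigma(x_0)-\sigma(P)|<\delta$ uniformly in $\sigma$, whence $\tau(P)-\sigma(P)\geq -2\delta$ for all $\sigma$, and $\tau(P)-\sigma(P)\geq m-2\delta$ for $\sigma\notin V_{\epsilon,K}(\tau)$.

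Finally I would set $Q_\eta=cP$ and choose the constants in the order: first $c:=4/m>0$, then $\delta<\min(m/4,\,\eta/(2c))$. For every $\sigma\in\mathcal S_R^n$ this gives $\tau(Q_\eta)-\sigma(Q_\eta)=c\bigl(\tau(P)-\sigma(P)\bigr)\geq -2c\delta>-\eta$, which is the first asserted inequality; and for $\sigma\notin V_{\epsilon,K}(\tau)$ it gives $\tau(Q_\eta)-\sigma(Q_\eta)\geq c(m-2\delta)\geq 2>1$, which is the second. The only genuinely nontrivial ingredient is the exposed-point proposition established above (resting on the Poulsen structure and Sidney's theorem); the work left here is the compactness upgrade to a uniform gap and, above all, the coordination of the approximation error $\delta$ with the scale $c$, so that replacing the abstract $C^*$-algebra element $x_0$ by an honest polynomial preserves the unit separation while keeping the upward slack below $\eta$. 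This last coordination is the main point that needs care, and it becomes routine once the quantities are fixed in the correct order.
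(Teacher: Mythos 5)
Your proof is correct and follows essentially the same route as the paper: extract a self-adjoint exposing element via the Poulsen/Sidney proposition, use compactness of $\mathcal S_R^n\setminus V_{\epsilon,K}(\tau)$ to get a uniform gap, approximate by a self-adjoint polynomial, and rescale. The only (cosmetic) difference is the order of operations — the paper rescales the exposing element first to make the gap equal to $2$ and then approximates within $\eta/2$, whereas you approximate first and coordinate $\delta$ with the scaling constant $c$ afterwards.
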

\begin{proof}
Take $\eta<1/2$. Since $\tau$ is weak-* exposed, first take $Q$ in $*_{i=1}^nC([-R,R])$ exposing it in $\mathcal S_R^n$, one can assume $Q$ self adjoint.
 After multiplication by a scalar one can assume, since $V^{c}=V_{\epsilon,K}(\tau)^{c}\cap \mathcal S_R^n$ is a compact set, that $\sup_{\sigma\in V^{c}}\sigma(Q)\leq \tau(Q)-2$. Let $Q_{\eta}$  be a self-adjoint polynomial such that $||Q-Q_{\eta}||_{R}\leq \eta/2$. For any state $\sigma$  {\gree we have} $|\sigma(Q_{\eta})-\sigma(Q)|\leq \eta/2$,
thus if  $\sigma\neq\tau$: $$\sigma(Q_{\eta})\leq\sigma(Q)+\eta/2<\tau(Q)+\eta/2\leq \tau(Q_{\eta})+\eta$$
and :$$\sup_{\sigma\in V^{c}}\sigma(Q_{\eta})\leq \tau(Q_{\eta})-2+\eta<\tau(Q_{\eta})-1.$$
\end{proof}

\section{Extremal states}

We first prove a concentration lemma.
\begin{lemma}\label{concentration}
If $\tau$ is an extremal state in $\mathcal S_R^n,\ { n>1}$, then for any $\eta,\epsilon, K>0$ there exists $\delta,L>0$ such that, for any 
  probability measure  $\mu$ on $(H_N^R)^n$,
 whose barycenter is in
 $V_{\delta,L}(\tau)$,  {\gree we have}~:
 $$\mu(\Gamma_R(\tau,\epsilon,K,N))\geq 1-\eta.$$
\end{lemma}
\begin{proof}
Let $\eta\in ]0,1/4[$. Then, by Corollary \ref{sep},   we
can find some self adjoint polynomial 
$Q_{\eta}\in{\bf C}\langle X_1,\ldots, X_n\rangle$ 
such that for every $\sigma\in \mathcal S_R^n$ {\gree we have}~:
$$\tau(Q_{\eta})>\sigma(Q_{\eta})-\eta/2$$ 
and for all $\sigma\notin V_{\epsilon,K}(\tau)$ one
has~: $$\sigma(Q_{\eta})<\tau(Q_{\eta})-1.$$

Let us now choose $L=\text{deg}(Q_{\eta})$, and $\delta$ small enough so that
 for all $\sigma\in V_{\delta,L}(\tau)$ {\gree we have}~:
 $$|\tau(Q_{\eta})-\sigma(Q_{\eta})|<\eta/2.$$
 If  $\mu$ is a probability measure on $(H_N^R)^n$
 whose barycenter  $\tau_{\mu}$ is in
 $V_{\delta,L}(\tau)$ then {\gree we have}
  \begin{align*}
 \tau(Q_{\eta})-\eta/2&\leq\tau_{\mu}(Q_{\eta})\\ &= 
 \int_{\Gamma_R(\tau,\epsilon,K,N)}\frac{1}{N}Tr(Q_{\eta})d\mu+
 \int_{(H^R_N)^{n}\setminus
 \Gamma_R(\tau,\epsilon,K,N)}\frac{1}{N}Tr(Q_{\eta})d\mu\\
 &\leq \mu(\Gamma_R(\tau,\epsilon,K,N))(\tau(Q_{\eta
 })+
 \eta/2)\\&\quad+
 \mu((H^R_N)^{n}\setminus\Gamma_R(\tau,\epsilon,K,N))(\tau(Q_{\eta
 })-1)
 \\ &\leq  \tau(Q_{\eta})+\eta/2- (1-\mu(\Gamma_R(\tau,\epsilon,K,N))).
  \end{align*}
  
 Therefore
 $$\mu(\Gamma_R(\tau,\epsilon,K,N))\geq 1-\eta.$$
  
\end{proof}

\begin{proposition}\label{main}
For any factor state $\tau$ in $\mathcal S_R^n,\ {n>1}$~:
$$\tuchi_R(\tau)={\gre \underline{\chi}_R(\tau)}.$$
Likewise $\tochi_R(\tau)={\red \chi^\omega_R(\tau)},\ \tchi_R(\tau)={\red \chi_R(\tau)}.$
\end{proposition}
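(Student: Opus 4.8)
The goal is to show that the concavified entropy $\tuchi_R(\tau)$ equals Voiculescu's $\underline{\chi}_R(\tau)$ at a factor (extremal) state $\tau$, with the analogous statements for the $\limsup$ and ultrafilter variants. The plan is to prove two inequalities. One direction, $\tuchi_R(\tau)\geq\underline{\chi}_R(\tau)$, is already recorded in Proposition \ref{geneProp} and holds for every $\tau$; the content of this proposition is the reverse inequality $\tuchi_R(\tau)\leq\underline{\chi}_R(\tau)$, which is exactly where extremality of $\tau$ must be used. Since the $\liminf$, $\limsup$, and $\lim_{N\to\omega}$ arguments are formally identical (the concentration estimate below is uniform in $N$ and the Euclidean volume estimate is purely combinatorial), I would carry out the $\liminf$ case in detail and remark that the other two follow verbatim.

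\textbf{The main estimate.}
Fix $\eta,\epsilon,K>0$ and let $\delta,L$ be produced by the concentration Lemma \ref{concentration}. I would take any Borel probability measure $\mu$ on $(H_N^R)^n$ whose barycenter $\tau_\mu$ lies in $V_{\delta,L}(\tau)$ and compare its entropy to the Voiculescu volume. The key is that Lemma \ref{concentration} forces $\mu(\Gamma_R(\tau,\epsilon,K,N))\geq 1-\eta$, i.e. almost all the mass of $\mu$ sits inside the microstate space $\Gamma:=\Gamma_R(\tau,\epsilon,K,N)$. Applying inequality (\ref{ent}) from the Lemma of section 3 with $E=(H_N^R)^n$ and $F=\Gamma$, one bounds
$$\text{Ent}(\mu)\leq \mu(\Gamma)\log\text{Leb}(\Gamma)+(1-\mu(\Gamma))\log\text{Leb}\big((H_N^R)^n\setminus\Gamma\big)+H_2(\mu(\Gamma)),$$
where $H_2$ is the binary entropy term. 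The second volume is at most the total volume $\text{Leb}\big((H_N^R)^n\big)=(C R^{2}N\cdots)^{\dots}$, which is of the form $e^{aN^2+bN\log N}$ for explicit constants; after dividing by $N^2$ and adding $\tfrac n2\log N$, the contribution of the complement set is $(1-\mu(\Gamma))$ times a bounded quantity, hence $O(\eta)$ uniformly in $N$, and the binary-entropy term $H_2(\mu(\Gamma))$ is $o(N^2)$. Therefore
$$\frac{1}{N^2}\text{Ent}(\mu)+\frac n2\log N\leq \frac{1}{N^2}\log\text{Leb}(\Gamma)+\frac n2\log N + C\eta,$$
uniformly over all admissible $\mu$, which lets me pass the supremum over $\mu$ to the right-hand side at no cost.

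\textbf{Passing to the limits.}
Taking the supremum over $\mu$ with $\tau_\mu\in V_{\delta,L}(\tau)$ on the left, and using $V_{\delta,L}(\tau)\subset V_{\epsilon',K'}(\tau)$ appropriately, the alternative formula for $\tuchi_R$ recorded after Definition \ref{ExtDef} gives, after taking $\liminf_{N\to\infty}$ and then $L\to\infty,\delta\to0$,
$$\tuchi_R(\tau)\leq \underline{\chi}_R(\tau)+C\eta.$$
Since $\eta\in]0,1/4[$ was arbitrary, letting $\eta\to0$ yields $\tuchi_R(\tau)\leq\underline{\chi}_R(\tau)$, and combined with Proposition \ref{geneProp} this gives equality. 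I would then remark that replacing $\liminf$ by $\limsup$ or by $\lim_{N\to\omega}$ throughout changes nothing in the argument, giving the two companion identities.

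\textbf{The main obstacle.}
I expect the genuine difficulty to be entirely concentrated in the concentration step, i.e. in the hypotheses of Lemma \ref{concentration}, and not in the volume bookkeeping. Everything rests on being able to separate $\tau$ from the complement of a weak-* neighborhood by a single self-adjoint polynomial $Q_\eta$ whose value is genuinely maximized at $\tau$; this is precisely the weak-* exposed-point property furnished by the Poulsen-simplex structure through Corollary \ref{sep}, and it is the one place where $n>1$ and factoriality are indispensable. The subtlety to watch is uniformity in $N$: the separating polynomial $Q_\eta$ and the radius $\delta,L$ must be chosen before $N$, so that the concentration bound $\mu(\Gamma)\geq1-\eta$ holds simultaneously for all matrix sizes — this is what makes the barycenter condition (a linear constraint on $\tau_\mu$) strong enough to control the nonlinear microstate volume, and it is the heart of why the concavified quantity does not overshoot Voiculescu's entropy at extremal states.
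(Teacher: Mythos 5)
Your strategy coincides with the paper's: the concentration Lemma \ref{concentration} plus the partition inequality (\ref{ent}), followed by the limit bookkeeping, and you correctly identify that the one nontrivial direction is $\tuchi_R(\tau)\leq\underline{\chi}_R(\tau)$ and that extremality enters only through the concentration step. However, your ``main estimate'' contains a genuine error. From (\ref{ent}) you have
$$\text{Ent}(\mu)\leq \mu(\Gamma)\log\text{Leb}(\Gamma)+\bigl(1-\mu(\Gamma)\bigr)\log\text{Leb}\bigl((H_N^R)^n\setminus\Gamma\bigr)+H_2\bigl(\mu(\Gamma)\bigr),$$
and you then replace the coefficient $\mu(\Gamma)$ of the main term by $1$, claiming the cost is $O(\eta)$ uniformly in $N$ and $\mu$. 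That replacement goes the \emph{wrong} way: since $\frac{1}{N^2}\log\text{Leb}\bigl((H_N^R)^n\bigr)+\frac n2\log N$ converges to a finite constant, $\log\text{Leb}(\Gamma)\leq\log\text{Leb}\bigl((H_N^R)^n\bigr)<0$ for large $N$, so $\mu(\Gamma)\log\text{Leb}(\Gamma)\geq\log\text{Leb}(\Gamma)$, and the error you must absorb is, after normalization, up to $\eta\,\bigl|\frac{1}{N^2}\log\text{Leb}(\Gamma)+\frac n2\log N\bigr|$. This is $O(\eta)$ uniformly in $N$ only if the normalized microstate volumes are bounded below, which fails exactly when $\underline{\chi}_R(\tau)=-\infty$. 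That case is genuinely covered by the proposition: for instance $\tau=\tau_{X_1,X_2}$ with $X_1$ semicircular free from a trace-$\frac12$ projection $X_2$ is a factor state with nonempty microstate spaces for all $(\epsilon,K)$ and large $N$, yet $\underline{\chi}_R(\tau)=-\infty$ because of the atoms of $X_2$. For such $\tau$ your displayed uniform inequality is false, and with it the derivation of $\tuchi_R(\tau)\leq\underline{\chi}_R(\tau)+C\eta$.

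The repair is small and is precisely what the paper does: keep the weight on the main term rather than discarding it. Using $\mu(\Gamma)\geq1-\eta$, $\text{Leb}(\Gamma)\leq\text{Leb}\bigl((H_N^R)^n\bigr)$ and the monotonicity of $x\mapsto -x\log x-(1-x)\log(1-x)$, one gets
$$\text{Ent}(\mu)\leq (1-\eta)\log\text{Leb}(\Gamma)+\eta\log\text{Leb}\bigl((H_N^R)^n\bigr)-\eta\log\eta-(1-\eta)\log(1-\eta),$$
which after dividing by $N^2$, adding $\frac n2\log N$, taking $\liminf$ (or $\limsup$, or the limit along $\omega$) and then the infima over $\delta,L,\epsilon,K$, yields $\tuchi_R(\tau)\leq(1-\eta)\underline{\chi}_R(\tau)+\eta C$ with $C$ the finite limit of the normalized volume of $(H_N^R)^n$. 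Letting $\eta\to0$ now gives the conclusion in \emph{both} cases: when $\underline{\chi}_R(\tau)$ is finite the factor $(1-\eta)$ is harmless, and when $\underline{\chi}_R(\tau)=-\infty$ the right-hand side is already $-\infty$ for every fixed $\eta$. Everything else in your write-up (the use of Proposition \ref{geneProp} for the easy inequality, the uniformity-in-$N$ discussion of the separating polynomial, and the remark that the three variants are handled verbatim) matches the paper and is fine.
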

\begin{proof} Consider an extremal state $\tau\in\mathcal S_R^{n}$, and $\eta,\epsilon,K>0$.
We can choose $\delta,L$ as in Lemma \ref{concentration}, so that
we can  estimate the
 entropy of $\mu$ using (\ref{ent}) { and the variations of $x\mapsto x\log x+(1-x)\log(1-x)$}~:
 {
 \begin{align*}
 \text{Ent}(\mu)\leq &
 \log \text{Leb}\left(\Gamma_R(\tau,\epsilon,K,N)\right)
 +\mu(\Gamma_R(\tau,\epsilon,K,N)^c)\log\frac{\text{Leb}\left((H_N^R)^n\right)}{\text{Leb}\left(\Gamma_R(\tau,\epsilon,K,N)\right)}\\ &-\eta\log
 (\eta)-(1-\eta)\log(1-\eta)
 \\ \leq  &
 (1-\eta)\log \text{Leb}\left(\Gamma_R(\tau,\epsilon,K,N)\right)
 +\eta\log\text{Leb}\left((H_N^R)^n\right)\\ &-\eta\log
 (\eta)-(1-\eta)\log(1-\eta)
 \end{align*}
 }
 

 This inequality holds for all probability measures with barycenter in
 {\red $V_{\delta,L}(\tau)$}, therefore the right hand side is a majorant of
 $\sup_{\sigma\in V_{\delta,L}(\tau)}\rho_{N,L}(\sigma)$.
 Now multiply both sides of this inequality by $1/N^2$, add $\frac{n}{2}\log N$
 and take $\liminf$ {\red(or $\limsup$ or a limit to $\omega$)} then infimum over, successively, $\delta,L,\epsilon,K,$ to get the result.
 \end{proof}

\section{Orbital free entropy and freeness in case of additivity of entropy}
\subsection{Motivation}
In this section, we extend the definition of orbital free entropy of \cite{HiaiMUeda}  to not necessarily hyperfinite multivariables.
Let us explain the main ideas before entering into technical details. Orbital entropy aims at measuring the lack of freeness in the same way that relative entropy of a measure $\mu$ with respect to the tensor product of its marginals (also called mutual information) does measure the lack of independence in the classical case.
In the non-microstate context, Voiculescu first introduced in \cite{V6} a notion of mutual information $i^{*}(W^{*}(X_1,...,X_m),W^{*}(X_{m+1},...,X_{m+n}))$ measuring this lack of freeness using conjugation by a free unitary brownian motion and proved, using this tool, that additivity of non-microstate free entropy implies freeness. In the microstate context, \cite{HiaiMUeda}   defined   $\chi_{orb}(X_1;...;X_n)$, which  measures the lack of freeness of $W^*(X_1),...,W^*(X_n)$ (and a variant where the $W^*(X_i)$ are replaced by hyperfinite algebras) relying on the fact that (at least at the level of measure spaces) the space of hermitian matrices can be factored into eigenbasis and eigenvalues, allowing to build microstates in a product of unitary groups. This idea however breaks down when one tries to replace $X_i$ by sets of variables generating non hyperfinite algebras, since in this case there does not exist a good description of the microstates.  
Our idea here is to overcome this lack of a  microstates model by using entropies of measures  instead of volumes of microstates.
At this point,  we have several possible candidates for a generalization. We will use one of them in this section, in order to reach our goal, the result that additivity of free entropy implies freeness. We will explore further possibilites, in order to lay the ground for future investigations, in the last section. 
{\red Finally, note that we prove this result about additivity only  for extremal states. It seems likely that  for nonextremal states freeness should be replaced by a kind of freeness with amalgamation with respect to some commutative central algebra, but we do not investigate this in the present paper. }
 
\subsection{The orbital free entropy of Hiai, Miyamoto and Ueda}
We consider finite sets of non-commutative random variables 
$\mathbf{X_i}=\{X_{i1},...,X_{iP_i}\}$ for $i=1,\ldots,n$, and $\bar n=\sum_i P_i$, { $\tilde{P}=\max_i P_i$}
with joint non-commutative (tracial) distribution  $\tau_{\mathbf{X_1};...;\mathbf{X_n}}\in \mathcal S_{ c}^{\bar n}$.
When each set $\mathbf{X_i}$ generates a hyperfinite algebra, 
Hiai, Miyamoto and Ueda \cite{HiaiMUeda} defined  orbital free entropy
$\chi_{orb}(\mathbf{X_1};...;\mathbf{X_n})$.
Let us recall their definition.
Let  $(\Xi_i(N))_{i=1\ldots n}; N\to\infty$ 
be a sequence of matrix sets of size $N$ ($\Xi_i=\{\xi_{i1},\ldots,\xi_{iP_i}\}$)
 which approximates  $(\mathbf{X_i})_{i=1\ldots n}$ in mixed moments as $N\to\infty$.
For $U\in U(N)$ we denote $U\Xi_i(N)U=\{U\xi_{i1}U^*,\ldots,U{ \xi_{iP_i}}U^*\}$.
Let $$\Gamma_{orb}(\mathbf{X_1},...,\mathbf{X_n}:\Xi_1 (N),...\Xi_n (N),N,K,\epsilon)$$ be the set of 
$(U_1,\ldots,U_n)\in U(N)^n$ such that the conjugated sets $(U_i\Xi_i(N)U_i^*)_{i=1\ldots n}$ approximate the mixed moments of $(\mathbf{X_i})_{i=1\ldots n}$ up to an error of $\epsilon$ and for degrees less than $K$ or, in other words, such that
$\tau_{U_1\Xi_1 (N)U_1^*,\ldots,U_n\Xi_n (N)U_n^*}\in V_{\epsilon,K}(\tau_{\mathbf{X_1};...;\mathbf{X_n}})$.
 Let $\mathcal{ H}_N^n$ be the Haar measure on $U(N)^n$ { (which, in the sequel, we will always assume normalized to be a probability)}, and 
\begin{equation}\label{orbit}
\gamma_{N,\Xi(N),\epsilon,K}= {\mathcal H}_N^n(\Gamma_{orb}(\mathbf{X_1},...,\mathbf{X_n}:\Xi_1(N),...,\Xi_n (N),N,K,\epsilon))
\end{equation}
then the orbital free entropy is defined as~:
$$\chi_{orb}(\mathbf{X_1},\ldots,\mathbf{X_n})=\lim_{\epsilon\to 0,K\to\infty}\limsup_{N\to \infty} \frac{1}{N^2}\log \gamma_{N,\Xi(N),\epsilon,K}.$$
It is proved in \cite{HiaiMUeda}, Lemma 4.2, that this quantity does not depend on the chosen sequence $\Xi(N)$. This relies on Jung's Lemma 
 \cite{Jung}, (see also Lemma 1.2 in \cite{HiaiMUeda}) which we recall here for future reference.
\begin{lemma}Let $\tau=\tau_{X_1,...,X_m}$ where the variables
$X_1,\ldots,X_m$ generate a hyperfinite algebra. Denote by $\Vert.\Vert_p$ the $p$-norm associated with $\tau$.
For every $\epsilon>0$ there exists $L,\delta$ such that,
for every $\Xi=(\xi_1,\ldots,\xi_m)$ and $\Xi'=(\xi'_1,\ldots,\xi'_m)$ in $({H}_N)^n$, satisfying 
$\tau_{\Xi},\tau_{\Xi'}\in V_{\delta,L}(\tau)$, there exists some unitary
$U\in U(N)$ such that
$$\Vert U\xi_iU^*-\xi'_i\Vert_p<\epsilon,\ \text{for}\ i=1,\ldots,n$$
\end{lemma}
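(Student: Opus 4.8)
The statement is Jung's Lemma, and the proof I would give follows Jung's strategy: reduce the asymptotic uniqueness of microstates, up to unitary conjugation, to the finite-dimensional case, where it becomes a statement about classifying $*$-representations by their multiplicities. Throughout I assume (as in all applications, where the matrices lie in $H_N^R$) an operator-norm bound on $\Xi,\Xi'$, so that at the end closeness in $\|\cdot\|_2$ upgrades to closeness in $\|\cdot\|_p$ via $\|A\|_p\le\|A\|_\infty^{1-2/p}\|A\|_2^{2/p}$ for $p\ge 2$ and $\|A\|_p\le\|A\|_2$ for $p\le 2$; this lets me work with $p=2$.

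\textbf{Reduction to a finite-dimensional subalgebra.} Since $M=W^*(X_1,\ldots,X_m)$ is hyperfinite, given the target accuracy $\epsilon$ I would first choose a finite-dimensional $*$-subalgebra $B\cong\bigoplus_{j=1}^r M_{k_j}(\mathbf C)$ of $M$ together with elements $Y_i\in B$ with $\|X_i-Y_i\|_2$ as small as desired. Fixing a complete system of matrix units $\{f^{(j)}_{pq}\}$ for $B$, each $f^{(j)}_{pq}$ lies in the weak closure of the $*$-algebra generated by the $X_i$, hence can be approximated in $\|\cdot\|_2$ by a noncommutative polynomial $P^{(j)}_{pq}(X_1,\ldots,X_m)$. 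The key point is that every estimate below involves only finitely many such polynomials, their products, and the polynomials expressing the $Y_i$, so a single choice of degree bound $L$ and tolerance $\delta$ (depending on $\epsilon$ and $\tau$, but \emph{not} on $N$) suffices to propagate all approximate identities to the matricial setting.

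\textbf{Rounding a microstate to an exact representation.} Given a microstate $\Xi=(\xi_1,\ldots,\xi_m)$ with $\tau_\Xi\in V_{\delta,L}(\tau)$, I would form $E^{(j)}_{pq}:=P^{(j)}_{pq}(\xi_1,\ldots,\xi_m)\in M_N$. Because the $f^{(j)}_{pq}$ satisfy the matrix-unit relations exactly and the moments of $\Xi$ agree with $\tau$ up to $(\delta,L)$, the $E^{(j)}_{pq}$ form an \emph{approximate} system of matrix units in normalized $\|\cdot\|_2$. The heart of the argument is then a stability (semiprojectivity) step: an approximate system of matrix units in $M_N$ can be perturbed to an \emph{exact} one $\{\tilde E^{(j)}_{pq}\}$, i.e.\ to a genuine unital $*$-representation $\pi_\Xi\colon B\to M_N$, with $\|E^{(j)}_{pq}-\tilde E^{(j)}_{pq}\|_2$ controlled by the defect and hence small. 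Carrying this out with $\|\cdot\|_2$-control and uniformly in $N$ is the main obstacle, and it is precisely where finite-dimensionality, hence hyperfiniteness, is used essentially.

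\textbf{Classification by multiplicities and conjugation.} An exact unital representation $\pi_\Xi\colon\bigoplus_j M_{k_j}\to M_N$ is determined up to unitary conjugation by its multiplicities $(r_j)_j$, which are read off from the trace via $r_j k_j/N=\frac1N\mathrm{Tr}(\pi_\Xi(z_j))\approx\tau(z_j)$, where $z_j$ is the central projection of the $j$-th summand. Applying this to both $\Xi$ and $\Xi'$, whose trace data are within $\delta$ of $\tau|_B$, gives $|r_j(\Xi)-r_j(\Xi')|=O(1)$, bounded independently of $N$. Aligning the matching copies of each summand by a unitary $U$, the representations $U\pi_\Xi(\cdot)U^*$ and $\pi_{\Xi'}(\cdot)$ coincide outside a subspace of dimension $\sum_j|r_j(\Xi)-r_j(\Xi')|k_j=O(1)$, whence $\|U\pi_\Xi(b)U^*-\pi_{\Xi'}(b)\|_2=O(N^{-1/2})$ for each fixed $b\in B$. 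I would then conclude by the triangle inequality
$$\|U\xi_iU^*-\xi'_i\|_2\le\|\xi_i-\pi_\Xi(Y_i)\|_2+\|U\pi_\Xi(Y_i)U^*-\pi_{\Xi'}(Y_i)\|_2+\|\pi_{\Xi'}(Y_i)-\xi'_i\|_2,$$
where the outer terms are small by the choice of $B$, the polynomial approximations, and the rounding estimate, and the middle term is $O(N^{-1/2})$. Taking $\delta$ small and $L$ large (and $N$ large) makes the right-hand side less than $\epsilon$, and the $\|\cdot\|_p$ bound follows from the norm comparison noted above.
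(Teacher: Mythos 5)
The first thing to say is that the paper contains no proof of this statement: it is Jung's lemma, recalled verbatim from \cite{Jung} (see also Lemma 1.2 of \cite{HiaiMUeda}) and used as a black box in the proof of Theorem \ref{prop}, so there is no in-paper argument to compare yours against. Judged on its own, your proposal follows the correct and standard strategy, essentially the one used in the literature: approximate the hyperfinite algebra by a finite-dimensional subalgebra $B=\bigoplus_j M_{k_j}(\mathbf C)$, transport its matrix units into $M_N$ by polynomials evaluated at the microstate, round the resulting approximate matrix units to an exact representation, and use the fact that representations of $B$ in $M_N$ are classified up to unitary conjugation by their multiplicities.

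There are, however, two genuine defects. First, the claim $|r_j(\Xi)-r_j(\Xi')|=O(1)$ is false: agreement of the trace data within $\delta$ only gives $|r_j(\Xi)k_j/N-\tau(z_j)|\le C\delta+(\text{rounding error})$, hence $|r_j(\Xi)-r_j(\Xi')|\le C'\delta N/k_j$, which grows linearly in $N$ (two admissible microstates can sit at opposite ends of the $\delta$-window). The mismatch subspace then has dimension $O(\delta N)$, so the middle term of your triangle inequality is $O(\sqrt{\delta})$ (using the operator norm bound), not $O(N^{-1/2})$. This is harmless for the conclusion, since smallness in $\delta$ is all that is needed, but the step as written is wrong, and it is also what tempts you to add ``(and $N$ large)'', which you are not entitled to: the lemma is quantified over all $N$, and the corrected estimate indeed requires no largeness of $N$. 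Second, the step you yourself call the heart of the argument --- that an approximate system of matrix units in $(M_N,\frac1N\mathrm{Tr})$, with defects small in normalized $\|\cdot\|_2$, can be perturbed to an exact one with dimension-free $\|\cdot\|_2$ control --- is asserted, not proved. It is true (functional calculus turns the almost-projections $E^{(j)}_{pp}$ into genuine orthogonal projections, polar decomposition turns the $E^{(j)}_{p1}$ into partial isometries, and all estimates are tracial, hence uniform in $N$), but it is precisely the technical core of Jung's lemma, so without it your text is a correct outline rather than a proof. A minor related point: the rounded representation cannot in general be made exactly unital; its unit is a projection of normalized trace close to $1$, and the small complement must be absorbed into the error estimate rather than ignored.
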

Furthermore, Hiai, Miyamoto and Ueda proved 
that free orbital entropy depends only on the $W^*$-algebras
$W_i=\mathbf{X_i}''$ generated by each set, i.e.
$$\chi_{orb}(\mathbf{X_1};...;\mathbf{X_n})=\chi_{orb}(\mathbf{Y_1};...;\mathbf{Y_n}),$$ for any other choice of finite  sets $\mathbf{Y_1};...;\mathbf{Y_n}$ such that
$W_i=\mathbf{Y_i}''$ (note that one does not assume that $\mathbf{X_i},\mathbf{Y_i}$ contain the same number of elements). Also they proved the formula relating orbital free entropy to Voiculescu's free entropy:
\begin{equation}\label{add}\chi(\mathbf{X_1}\cup...\cup\mathbf{X_n})= \chi_{orb}(\mathbf{X_1};...;\mathbf{X_n})
+ \chi(\mathbf{X_1})+...+\chi(\mathbf{X_n}),
\end{equation}
and proved that, for a set with finite free entropy, additivity of free entropy, {
i.e. \begin{equation}\chi(\mathbf{X_1}\cup...\cup\mathbf{X_n})= \chi(\mathbf{X_1})+...+\chi(\mathbf{X_n}),
\end{equation}}
which is
equivalent to
 $\chi_{orb}(\mathbf{X_1};...;\mathbf{X_n})=0$ by (\ref{add}),
holds if and only if $\mathbf{X_1},\ldots,\mathbf{X_n}$ are free.

\subsection{Orbital free entropy for arbitrary multivariables}
In the following, we give a definition of $\tchi_{orb}(\mathbf{X_1};...;\mathbf{X_n})$
for arbitrary finite sets $\mathbf{X_1};...;\mathbf{X_n}$, which coincides with the previous definition when the sets of multivariables are hyperfinite and $\tau_{\mathbf{X_1};...;\mathbf{X_n}}$ is a factor state.

Let  $\mu \in P(H_N^R)^{\bar n}$ be a  (Borel) probability measure on $(H_N^R)^{\bar n}=\prod_i(H_N^R)^{P_i}$, considered as the joint distribution of  sets of random matrices $\mathbf{M_1};...;\mathbf{M_n}$, with
$\mathbf{M_i}=\{M_{i1},\ldots,M_{iP_i}\}$.
We denote by $U\mu$  the probability measure on $(H_N^R)^{\bar n}$, obtained by conjugating the sets $\mathbf{M_i}$ by independent Haar unitaries from $U(N)$, i.e. $U\mu$ is the joint distribution of the sets $U_i \mathbf{M_i}U_i^*=\{U_i M_{i1}U_i^*,\ldots,U_iM_{iP_i}U_i^*\}$, where $U_1,\ldots, U_n$ are independent unitary matrices, all distributed according to (normalized) Haar measure on  $U(N)$.
Equivalently, if $$\Phi_N:U(N)^n\times (H_R)^{\bar n}\to (H_R)^{\bar n}$$ is the map given by conjugation:
$$(U_i,\mathbf{X_i})_{i=1,\ldots,n}\mapsto (U_i\mathbf{X_i}U_i^*)_{i=1,\ldots,n}$$
then { $U\mu$ is given by the pushforward measure} :
$$U\mu=\Phi_{N*}(\mathcal{H}_N^n\otimes\mu).$$

 \begin{definition}
Let $\mathbf{X_1};...;\mathbf{X_n}$ be finite sets of noncommutative random variables as above,
  their \textit{\textbf{orbital entropy}} is defined as~:
    $$\tchi_{orb}(\mathbf{X_1};...;\mathbf{X_n})
                  =\sup_{R{ \geq \mathcal{R}(\tau_{\mathbf    {X_1},...,\mathbf{X_n}})}}\lim_{K\to\infty,\epsilon\to 0}
                \limsup_{N\to\infty}
\left(\frac{1}{N^2}\sup_{{\mu \in P(H_N^R)^{\bar n}\atop \,\tau_\mu\in
 V_{\epsilon,K}(\tau_{\mathbf    {X_1},...,\mathbf{X_n}})} }
\text{Ent}( \mu |U \mu)\right).$$
Similarly we define the $\liminf $ and ultrafilter variants
$\underline{\tchi}_{orb}$ and $\tchi_{orb}^\omega$.
     \end{definition}
   Note that, in this definition, limits in $\epsilon,K$ are actually infima. 

{\bl Recall from \cite[Def 3.1]{EntPow} that a state is said to have \textit{finite-dimensional approximants} if for every $K,\epsilon$ there exists $N_0$ such that for $N\geq N_0$, $\Gamma_R(\tau,\epsilon,K,N)\neq \emptyset$}.

\begin{theorem}\label{prop}
The orbital free   entropy satisfies the following properties.

\medskip

{\sl $(1)$  (Negativity)}
$$\tchi_{orb}(\mathbf{X_1};...;\mathbf{X_n})\leq 0.$$

\medskip

{\sl $(2)$  (Vanishing for one multivariable) }
$$\tchi_{orb}(\mathbf{X})=0,$$ for any single multivariable 
${\mathbf X}=\{X_1,\ldots,X_m\}$ {\bl having finite-dimensional approximants}.

\medskip

{\sl $(3)$ (Monotonicity)}
$$ \tchi_{orb}(\mathbf{X_1};...;\mathbf{X_n})\leq \tchi_{orb}( \mathbf{Y_1};...; \mathbf{Y_n}),$$
if ${\mathbf Y_i}\subset {\mathbf X_i}$ for $1\leq i\leq n$.

\medskip

{\sl $(4)$ (Subadditivity)}
$$\tchi_{orb}(\mathbf{X_1};...;\mathbf{X_m};\mathbf{X_{m+1}};...;\mathbf{X_n})\leq \tchi_{orb}(\mathbf{X_1},...,\mathbf{X_m})+ \tchi_{orb}(\mathbf{X_{m+1}};...;\mathbf{X_n}).$$

\medskip

{\sl $(5)$ (Connection with free entropy)}
 $$\tchi(\mathbf{X_1}\cup...\cup\mathbf{X_n})\leq \tchi_{orb}(\mathbf{X_1},...,\mathbf{X_n})+ \tchi(\mathbf{X_1})+\ldots+\tchi(\mathbf{X_n}).$$

 \medskip

{\sl $(6)$ (Agreement with previous definition)}

 Assume  $\mathbf{X_i}$ are hyperfinite multivariables and let $\chi_{orb}(\mathbf{X_{1}};...;\mathbf{X_n})$ denote the orbital free entropy of \cite{HiaiMUeda},
then 
$$\chi_{orb}(\mathbf{X_{1}};...;\mathbf{X_n})\leq \tchi_{orb}(\mathbf{X_{1}};...;\mathbf{X_n}).$$
Moreover, if $\tau_{\mathbf{X_{1}},...,\mathbf{X_{n}}}$ is extremal then
 $$\chi_{orb}(\mathbf{X_{1}};...;\mathbf{X_n})=\tchi_{orb}(\mathbf{X_{1}};...;\mathbf{X_n}).$$  

 \medskip

 {\sl $(7)$ (Alternative microstates formula in the extremal case)}

If $\tau_{\mathbf{X_{1}},...,\mathbf{X_{n}}}$ is extremal then
 $$\tchi_{orb}(\mathbf{X_{1}};...;\mathbf{X_n})=\sup_R\lim_{K\to\infty,\epsilon\to 0}
                \limsup_{N\to\infty}
\sup_{{\Xi,\tau_\Xi\in
 V_{\epsilon,K}(\tau_{\mathbf    {X_1},...,\mathbf{X_n}})} }
\left(\frac{1}{N^2}\log\gamma_{N,\Xi,\epsilon,K}\right).$$  

 \medskip

{\sl $(8)$ (Dependence on algebras)}

If $\mathbf{X_{1}},\dots,\mathbf{X_{n}}$, $\mathbf{Y_{1}},\dots,\mathbf{Y_{n}}$ are  multi-variables such that
$\mathbf{Y_{i}}\subset W^*(\mathbf{X_{i}})$ for $1\le i\le n$, then
$$
\tchi_{orb}(\mathbf{X_{1}},\dots,\mathbf{X_{n}})\leq\tchi_{orb}(\mathbf{Y_{1}},\dots,\mathbf{Y_{n}}).
$$
In particular, $\tchi_\mathrm{orb}(\mathbf{X_{1}},\dots,\mathbf{X_{n}})$ 
 depends only upon
$W^*(\mathbf{X_{1}}),\dots,W^*(\mathbf{X_{n}}).$
 
\medskip

{\sl $(9)$ (Orbital Talagrand's inequality and Characterization of Freeness)}

 For $\tau=\tau_{\mathbf{X_{1}},...,\mathbf{X_{n}}}$ extremal, let $\tau_{free}=\tau_{\mathbf{X_{1}}}*\dots*\tau_{\mathbf{X_{n}}}$ the free product of its marginals, then :
$$d_{W}(\tau,\tau_{free})\leq 4R\sqrt{-\tilde{ P}\tchi_{orb}(\mathbf{X_{1}};...;\mathbf{X_{n}})},$$
where $d_W$ is the $2$-Wasserstein distance of \cite{BV}.
 As a consequence, if $\tau$ { is extremal {and} has finite-dimensional approximants}, then $\tchi_{orb}(\mathbf{X_{1}};...;\mathbf{X_{n}})=0$ if and only if $\tau=\tau_{free}$.
 \end{theorem}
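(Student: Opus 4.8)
The plan is to treat the nine properties in order of increasing difficulty, reserving the transportation estimate (9) as the genuine obstacle. Property (1) is immediate: each $\text{Ent}(\mu|U\mu)$ is nonpositive by Jensen's inequality, so every term in the $\sup$--$\limsup$ is $\leq 0$. For (2), the hypothesis of finite-dimensional approximants lets me pick a microstate $\Xi\in\Gamma_R(\tau,\epsilon,K,N)$ and take $\mu$ to be its unitary orbit measure (the law of $(U\xi_1U^*,\dots,U\xi_mU^*)$ under Haar $U$); this $\mu$ is conjugation invariant, so $U\mu=\mu$, whence $\text{Ent}(\mu|U\mu)=\text{Ent}(\mu|\mu)=0$ while $\tau_\mu=\tau_\Xi\in V_{\epsilon,K}(\tau)$, and with (1) this gives $0$. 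For (3), projecting out the coordinates not in $\mathbf{Y_i}$ defines a map $\pi$ that commutes with block conjugation, so $\pi_*(U\mu)=U(\pi_*\mu)$; the pushforward inequality (\ref{push}) gives $\text{Ent}(\pi_*\mu|U\pi_*\mu)\geq\text{Ent}(\mu|U\mu)$, and since $\tau_{\pi_*\mu}\in V_{\epsilon,K}(\tau_{\mathbf{Y}})$ whenever $\tau_\mu\in V_{\epsilon,K}(\tau_{\mathbf{X}})$, taking suprema yields monotonicity.

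Properties (4) and (5) rest on identities for relative and differential entropy but require some care. For subadditivity (4) I would use the chain rule $\text{Ent}(\mu|U\mu)=\text{Ent}(\mu_Y|(U\mu)_Y)+\int\text{Ent}(\mu_{Z|y}|(U\mu)_{Z|y})\,d\mu_Y(y)$, identify $(U\mu)_Y=U^Y\mu_Y$ (the independently conjugated first super-block) as the first reference, and control the conditional term by joint concavity of $\text{Ent}(\cdot|\cdot)$; the delicate point is that the conditional reference $(U\mu)_{Z|y}$ genuinely depends on $y$, so one must exploit the $U^Y$-invariance of $(U\mu)_Y$, as in the subadditivity argument of \cite{HiaiMUeda}. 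For (5) I would start from $\text{Ent}(\mu)=\text{Ent}(\mu|U\mu)+\int\log\tfrac{dU\mu}{d\mathrm{Leb}}\,d\mu$ and estimate the density of $U\mu$ through the Weyl integration formula and asymptotic freeness by the marginal (orbit) entropies, so that after normalizing by $N^2$, adding $\tfrac{\bar n}{2}\log N$ and taking the limit one recovers $\tchi(\mathbf{X_1}\cup\dots\cup\mathbf{X_n})\leq\tchi_{orb}+\sum_i\tchi(\mathbf{X_i})$.

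The agreement statements (6) and (7) I would derive from the restriction formula for relative entropy (\ref{entnu}). Fix an approximant $\Xi$, let $\mu_0$ be the law of $(U_i\Xi_iU_i^*)_i$ under full independent Haar conjugation, and let $\nu$ be $\mu_0$ conditioned on $(U_1,\dots,U_n)\in\Gamma_{orb}$. Reconjugating by fresh Haar unitaries reabsorbs the restriction, so $U\nu=\mu_0$ and $\text{Ent}(\nu|U\nu)=\log\mu_0(\Gamma_{orb})=\log\gamma_{N,\Xi,\epsilon,K}$; taking suprema gives $\chi_{orb}\leq\tchi_{orb}$ and, optimizing over $\Xi$ with $\tau_\Xi\in V_{\epsilon,K}$, the inequality ``$\geq$'' in the microstates formula (7). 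For the reverse inequality in the extremal case I would invoke the concentration Lemma~\ref{concentration} (so any admissible $\mu$ puts almost all its mass on $\Gamma_{orb}$) together with Jung's Lemma recalled above (so that, in the hyperfinite extremal case, $\Gamma_{orb}$ is essentially one conjugation orbit), bounding $\text{Ent}(\mu|U\mu)$ from above by $\log\gamma_{N,\Xi,\epsilon,K}+o(N^2)$. Property (8) is then proved exactly as (3), replacing the coordinate projection by a conjugation-equivariant map $F_i$ obtained by approximating each element of $\mathbf{Y_i}\subset W^*(\mathbf{X_i})$ by noncommutative polynomials in $\mathbf{X_i}$; data processing (\ref{push}) and the limits in $\epsilon,K$ give the estimate, and the ``depends only upon $W^*(\mathbf{X_i})$'' consequence follows by symmetry.

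The real work is (9), which I expect to be the main obstacle. The plan is to establish, at finite $N$, a transportation--cost ($T_2$) inequality of the form $W_2(\mu,U\mu)^2\leq C(R,\tilde P)\,(-\text{Ent}(\mu|U\mu))$ for the normalized Hilbert--Schmidt metric on $(H_N^R)^{\bar n}$, with $U\mu$ as reference. The mechanism I would use is concentration of measure on the unitary group: since $U\mu=\Phi_{N*}(\mathcal H_N^n\otimes\mu)$ and $\Phi_N$ is Lipschitz with constant governed by $R$, the logarithmic Sobolev / transportation inequality on $U(N)^n$ (whose constant scales like $N$) transfers to $U\mu$ and produces the factor $R^2/N^2$ and the multiplicity $\tilde P$. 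One then passes to the free limit: $\tau_\mu\to\tau$, while by Voiculescu's asymptotic freeness $\tau_{U\mu}\to\tau_{free}$, and the matricial $W_2$ dominates the free $2$-Wasserstein distance $d_W$ of \cite{BV} in the limit, giving $d_W(\tau,\tau_{free})\leq 4R\sqrt{-\tilde P\,\tchi_{orb}}$. The two corollaries are then formal: if $\tchi_{orb}=0$ the inequality forces $d_W(\tau,\tau_{free})=0$, i.e.\ $\tau=\tau_{free}$; conversely, if $\tau=\tau_{free}$ has finite-dimensional approximants, the product $\mu=\otimes_i\mu_i$ of block orbit measures is invariant under independent conjugation, so $\text{Ent}(\mu|U\mu)=0$ and, by asymptotic freeness, $\tau_\mu\to\tau_{free}=\tau$, whence $\tchi_{orb}=0$ by (1). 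The hard part is the $T_2$ inequality for the singular-looking reference $U\mu$ uniformly in $N$, together with the clean identification of the limiting matricial Wasserstein distance with $d_W$; this is precisely where the constants $4R$ and $\tilde P$ must be tracked, and where concentration on $U(N)$ does the essential work.
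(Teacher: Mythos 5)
Your plans for (1)--(4) and (6)--(8) are essentially the paper's own proofs: negativity of relative entropy for (1), an invariant orbit measure for (2), data processing (\ref{push}) composed with coordinate projections resp.\ polynomial maps (Kaplansky) for (3) and (8), and, for (6)--(7), the restriction formula plus Lemma \ref{concentration} and Jung's lemma (your exact identity $\text{Ent}(\nu|U\nu)=\log\gamma_{N,\Xi,\epsilon,K}$ is a clean variant of the paper's one-sided use of (\ref{push})). In (4) your chain-rule route can be made to work, but the invariance you must exploit is that of the \emph{conditional} reference $(U\mu)_{Z|y}$ under conjugation of the $Z$-blocks (not of $(U\mu)_Y$); combined with joint convexity this gives $\int \text{Ent}(\mu_{Z|y}|U\mu_{Z|y})\,d\mu_Y\le \text{Ent}(\mu_Z|U\mu_Z)$, which is what the paper's appeal to ``subadditivity of relative entropy'' (and its careful version, Lemma \ref{UEnt}) amounts to. Two steps, however, have genuine gaps: (5) and (9).

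In (5), besides a sign slip (the identity is $\text{Ent}(\mu)=\text{Ent}(\mu|U\mu)-\int\log\frac{dU\mu}{d\mathrm{Leb}}\,d\mu$), your proposed continuation --- estimating the density of $U\mu$ ``through the Weyl integration formula and asymptotic freeness'' --- is precisely the move that cannot work here: the eigenvalue/eigenvector factorization underlying Weyl integration is available only for single matrices, i.e.\ for hyperfinite blocks, and its failure for multi-matrix blocks is the very reason the paper replaces microstates by measures. No density estimate or random-matrix asymptotics is needed. By $U(N)^n$-invariance of Lebesgue measure, $\int f\log f_U\,dM=\int f_U\log f_U\,dM$ (where $f,f_U$ are the densities of $\mu,U\mu$), so the cross term equals $\text{Ent}(U\mu)$ exactly, giving $\text{Ent}(\mu)=\text{Ent}(\mu|U\mu)+\text{Ent}(U\mu)$; then classical subadditivity of differential entropy over the blocks yields $\text{Ent}(U\mu)\le\sum_i\text{Ent}(p_iU\mu)$, and each $p_iU\mu=Up_i\mu$ is an approximating measure for $\tau_{\mathbf{X_i}}$, so taking limits finishes the proof.

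In (9), the inequality you aim for is the correct one, but the mechanism you propose would not prove it. A transportation inequality $W_2(\nu,U\mu)^2\le \frac{C}{N^2}\left(-\text{Ent}(\nu|U\mu)\right)$ with $U\mu$ as reference, valid for \emph{all} test measures $\nu$, is false: $U\mu$ is a mixture over $\mathbf M$ of orbit measures, and if $\mu$ charges two tuples with well-separated spectra, taking $\nu$ equal to one orbit component gives relative entropy $-\log 2$ but $W_2$ bounded below independently of $N$. Likewise nothing ``transfers'' from $\mathcal H_N^n\otimes\mu$ through the Lipschitz map $\Phi_N$, because $\mu$ is arbitrary and satisfies no transportation inequality. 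What is true, and what the paper proves, is the inequality for the specific pair $(\mu,U\mu)$, and it requires the disintegration argument: with $f=d\mu/dU\mu$ and $g(U,\mathbf M)=f(U\cdot\mathbf M)$, invariance of $U\mu$ plus Fubini give that $g(\cdot,\mathbf M)$ is a probability density on $SU(N)^n$ for $U\mu$-a.e.\ $\mathbf M$ and that $\text{Ent}(\mu|U\mu)=\int \text{Ent}\left(g(\cdot,\mathbf M)\mathcal{SH}_N^n\,|\,\mathcal{SH}_N^n\right)dU\mu(\mathbf M)$; one then applies the Otto--Villani Talagrand inequality \emph{fiberwise} on $SU(N)^n$ (on $SU$, not $U$, because of the center; the Ricci bound $\sim N$ produces the $1/N^2$), chooses the optimal couplings $\pi_{\mathbf M}$ measurably in $\mathbf M$, and pushes the resulting coupling of $\mu$ and $U\mu$ through the conjugation map, which is $2R$-Lipschitz in each block, yielding the constant $(4R)^2\tilde P/N^2$. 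Note also that $\tau_{U\mu_m}\to\tau_{free}$ is not plain asymptotic freeness, since the blocks are not independent under $\mu_m$: one needs Collins' result for distributions concentrating around their mean, the concentration being supplied by Lemma \ref{concentration} (extremality enters here again). Your converse, via the product of block orbit measures of microstates, is correct and in fact somewhat cleaner than the paper's argument, which reuses $U\mu_m$ and Collins' theorem.
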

 \begin{corollary}\label{additivity}
If $\chi^\omega(\mathbf{X_{1}}\cup...\cup\mathbf{X_{n}})>-\infty$, then
 $$\chi^\omega(\mathbf{X_{1}}\cup...\cup\mathbf{X_{n}})=\chi^ \omega(\mathbf{X_{1}})+...+\chi^\omega(\mathbf{X_{n}})$$ 
 if and only if
$\mathbf{X_{1}},...,\mathbf{X_{n}}$ are free. The only if part also holds for the limsup variant $\chi$.
 \end{corollary}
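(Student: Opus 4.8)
The plan is to derive the corollary directly from Theorem \ref{prop}, treating it as the natural payoff of the orbital machinery. The two implications go in opposite directions, and I would handle them separately. For the ``only if'' direction, assume $\chi^\omega(\mathbf{X_{1}}\cup\dots\cup\mathbf{X_{n}}) > -\infty$ together with the additivity hypothesis $\chi^\omega(\mathbf{X_{1}}\cup\dots\cup\mathbf{X_{n}}) = \chi^\omega(\mathbf{X_{1}})+\dots+\chi^\omega(\mathbf{X_{n}})$. The first thing I would check is that finiteness of $\chi^\omega$ of the union forces the joint state $\tau$ to be extremal (factorial): this is the standard fact that finite free entropy implies a factor, so the extremality hypotheses in parts $(6)$--$(9)$ of Theorem \ref{prop} are available. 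Then I would invoke part $(5)$, the connection-with-free-entropy inequality, applied in the $\omega$ (or $\limsup$) variant, namely $\tochi(\mathbf{X_{1}}\cup\dots\cup\mathbf{X_{n}}) \leq \tochi_{orb}(\mathbf{X_{1}};\dots;\mathbf{X_{n}}) + \tochi(\mathbf{X_{1}})+\dots+\tochi(\mathbf{X_{n}})$, combined with Proposition \ref{main}, which identifies $\tochi_R$ with $\chi^\omega_R$ on factor states (and hence $\tochi = \chi^\omega$ after taking the supremum over $R$).

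Combining these, the additivity hypothesis rearranges into $\tochi_{orb}(\mathbf{X_{1}};\dots;\mathbf{X_{n}}) \geq 0$; since part $(1)$ gives $\tochi_{orb} \leq 0$, we conclude $\tochi_{orb}(\mathbf{X_{1}};\dots;\mathbf{X_{n}}) = 0$. At this point I would apply the characterization of freeness in part $(9)$: because $\tau$ is extremal and has finite-dimensional approximants (the latter following again from $\chi^\omega > -\infty$, which guarantees nonempty microstate sets $\Gamma_R(\tau,\epsilon,K,N)$ for large $N$), vanishing of the orbital entropy forces $\tau = \tau_{free}$, i.e. $\mathbf{X_{1}},\dots,\mathbf{X_{n}}$ are free. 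The ``if'' direction is the easier converse: assuming freeness, one has $\tau = \tau_{free}$ so part $(9)$ gives $\tochi_{orb} = 0$, and then part $(5)$ together with Proposition \ref{main} collapses to the additivity identity $\chi^\omega(\mathbf{X_{1}}\cup\dots\cup\mathbf{X_{n}}) = \chi^\omega(\mathbf{X_{1}})+\dots+\chi^\omega(\mathbf{X_{n}})$ --- though here I would note that additivity of $\chi$ under freeness is already classical (Voiculescu), so one may simply cite it.

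The main obstacle, I expect, is bookkeeping the three variants ($\liminf$, $\limsup$, and the ultrafilter $\omega$) consistently across parts $(5)$ and $(9)$ and Proposition \ref{main}. Theorem \ref{prop} is stated for the $\limsup$ flavor $\tochi_{orb}$, while Corollary \ref{additivity} is phrased for the $\omega$ variant with the remark that the ``only if'' part also holds for $\chi$. I would therefore be careful to verify that the inequality in part $(5)$ and the equalities in Proposition \ref{main} hold with matching variants, and that the Wasserstein estimate of part $(9)$ (which bounds $d_W(\tau,\tau_{free})$ by a multiple of $\sqrt{-\tochi_{orb}}$) is genuinely independent of the variant used for the orbital quantity, since the distance $d_W(\tau,\tau_{free})$ depends only on the fixed state $\tau$. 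The remaining subtlety is that the additivity identity is an equality of three \emph{finite} numbers only once we know each $\chi^\omega(\mathbf{X_{i}})$ is finite; the hypothesis $\chi^\omega(\mathbf{X_{1}}\cup\dots\cup\mathbf{X_{n}}) > -\infty$ must be shown to propagate to finiteness of each marginal entropy (via the formula of part $(5)$ and negativity in part $(1)$, which bound each $\tochi(\mathbf{X_{i}})$ from below), so that the rearrangement yielding $\tochi_{orb} \geq 0$ is legitimate and no $-\infty$ cancellations occur.
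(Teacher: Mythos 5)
Your overall strategy is the same as the paper's (combine part (5) of Theorem \ref{prop} with Proposition \ref{main}, negativity (1) and the freeness characterization (9), citing Voiculescu for the converse), but there is a genuine gap at the central step, namely the ``rearrangement'' that is supposed to yield $\tchi^{\omega}_{orb}\geq 0$. Written out, what you have is
$$\sum_i \chi^\omega(\mathbf{X_i}) \;=\; \chi^\omega(\mathbf{X_1}\cup\dots\cup\mathbf{X_n}) \;=\; \tochi(\mathbf{X_1}\cup\dots\cup\mathbf{X_n}) \;\leq\; \tchi^{\omega}_{orb}(\mathbf{X_1};\dots;\mathbf{X_n}) + \sum_i \tochi(\mathbf{X_i}),$$
and to conclude $\tchi^{\omega}_{orb}\geq 0$ you need $\tochi(\mathbf{X_i})\leq \chi^\omega(\mathbf{X_i})$ for \emph{each marginal}. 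The general inequality of Proposition \ref{geneProp} goes the wrong way ($\tochi\geq\chi^\omega$), and equality is only supplied by Proposition \ref{main}, which requires the marginal $\tau_{\mathbf{X_i}}$ to be extremal. Extremality of the joint state, which you correctly deduce from finiteness, does \emph{not} pass to marginals: $W^*(\mathbf{X_i})$ is a subalgebra of a factor and need not be a factor. Worse, when $\mathbf{X_i}$ is a single variable its marginal is essentially never extremal (the extreme points of $\mathcal S_R^1$ are Dirac masses), so Proposition \ref{main} can never be invoked there and one must instead use the one-variable identity $\chi=\tchi$ from Remark \ref{compVBH}. Your closing paragraph discusses only finiteness bookkeeping, which is not where the difficulty lies.

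The paper closes exactly this gap by contradiction: if $\tchi_{orb}<0$, then part (5) plus additivity forces a strict inequality $\chi(\mathbf{X_i})<\tchi(\mathbf{X_i})$ for some $i$; Remark \ref{compVBH} rules out $\mathbf{X_i}$ being a singleton; then, having at least two variables, Proposition \ref{main} says the strict inequality forces $\tau_{\mathbf{X_i}}$ to be non-extremal, whence $\chi(\mathbf{X_i})=-\infty$ by Voiculescu's degenerate convexity \cite{Voi3}, contradicting the finiteness of each marginal entropy that the additivity hypothesis provides. Your direct approach can be repaired along the same lines: additivity makes each $\chi^\omega(\mathbf{X_i})$ finite (each is bounded above by the finite maximal entropy, and the sum is finite), hence $\chi(\mathbf{X_i})\geq\chi^\omega(\mathbf{X_i})>-\infty$; for multi-variable $\mathbf{X_i}$ this finiteness forces extremality by \cite{Voi3} and then Proposition \ref{main} applies, while for single-variable $\mathbf{X_i}$ Remark \ref{compVBH} gives the needed equality directly. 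Without one of these two routes, the inequality $\sum_i\tochi(\mathbf{X_i})\leq\sum_i\chi^\omega(\mathbf{X_i})$ — and hence the conclusion $\tchi^{\omega}_{orb}=0$ — is unjustified.
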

\begin{proof}[Proof of corollary]
Assume $\mathbf{X_{1}}\cup...\cup\mathbf{X_{n}}$  has finite entropy , and
$$\chi(\mathbf{X_{1}},...,\mathbf{X_{n}})=\chi(\mathbf{X_{1}})+...+\chi(\mathbf{X_{n}}).$$

{By finiteness of Voiculescu's entropy we  know that $\tau_{\mathbf{X_{1}}\cup...\cup\mathbf{X_{n}}}$ is an extremal state and, by proposition \ref{main}, $\chi(\mathbf{X_{1}},...,\mathbf{X_{n}})=\tchi(\mathbf{X_{1}},...,\mathbf{X_{n}})$.

Assume also for contradiction $\tchi_{orb}(\mathbf{X_{1}},\dots,\mathbf{X_{n}})<0$.  From (5) of Theorem \ref{prop}, {\gree we get} :
$$\chi(\mathbf{X_{1}})+...+\chi(\mathbf{X_{n}})< \tchi(\mathbf{X_{1}})+...+\tchi(\mathbf{X_{n}}).$$
By the general inequality in proposition \ref{geneProp}, there exists an $i$ with $\chi(\mathbf{X_{i}})< \tchi(\mathbf{X_{i}}).$ By the end of remark \ref{compVBH}, the set $\mathbf{X_{i}}$ contains at least two variables, so that by proposition \ref{main} again, $\tau_{\mathbf{X_{i}}}$ cannot be extremal, which implies $\chi(\mathbf{X_{i}})=-\infty$ by Voiculescu's result \cite{Voi3}, a contradiction with $\chi(\mathbf{X_{1}}\cup...\cup\mathbf{X_{n}})>-\infty.$

We thus deduce, using (1) of Theorem \ref{prop}, $\tchi_{orb}(\mathbf{X_{1}},\dots,\mathbf{X_{n}})=0.$}

Then $\mathbf{X_{1}},...,\mathbf{X_{n}}$ are free by point (9) of the same theorem.  The ultrafilter variant is similar. 
The converse statement is due to Voiculescu \cite{EntPow}.
\end{proof}

\medskip

\begin{proof}[Proof of Theorem \ref{prop}]
In the following we say that $\mu$ is an approximating measure for $\tau$ if $\tau_\mu$
 belongs to $V_{\epsilon,K}(\tau)$ for some $\epsilon,K>0$.

\medskip

{\sl $(1)$}
Negativity follows from the negativity of relative entropy. 

\medskip

{\sl $(2)$} For a single multivariable, if $\mu$ is an approximating measure, then $\nu=U\mu$ also approximates with the same precision, and obviously $U\nu=\nu$, therefore
$\text{Ent}(\nu|U\nu)=0$. 

\medskip

{\sl $(3)$}
If ${\mathbf Y_i}\subset {\mathbf X_i}$ for $1\leq i\leq n$ and $\mu$ is an approximating measure
for the ${\mathbf X_i}$, then  its  image by the projection map $q$ on the marginal distribution of the ${\mathbf Y_i}$ is an approximating measure for the 
${\mathbf Y_i}$, furthermore $qU\mu=Uq\mu$, therefore
 by (\ref{push}) {\gree we have} 
$$\text{Ent}(\mu|U\mu)\leq \text{Ent}(q\mu|qU\mu)=\text{Ent}(q\mu|Uq\mu),$$ and taking limits gives  the required inequality. 
 
\medskip

{\sl $(4)$}   If $\mu$ is an approximating measure for 
$\mathbf{X_1};...;\mathbf{X_n}$ let $\mu_1$ and $\mu_2$ denote the marginal distributions of  
$\mathbf{X_1};...;\mathbf{X_m}$ and $\mathbf{X_{m+1}};...;\mathbf{X_n}$, then
$U\mu_1$ and $U\mu_2$ are the marginal distributions under $U\mu$, therefore, by subadditiviy of relative entropy:
$$\text{Ent}(\mu|U\mu)\leq
\text{Ent}(\mu_1|U\mu_1)+\text{Ent}(\mu_2|U\mu_2).$$
The inequality follows by taking limits.

\medskip

{\sl $(5)$} 
Let $\mu$ be an approximating measure on
 $(H_N^R)^{\bar n}=(H_N^R)^{{P_1}}\times\ldots\times (H_N^R)^{{P_n}}$, with finite entropy, and consider the action of $U(N)^n$ by conjugation on $(H_N^R)^{\bar n}$, then $U\mu$ is the average of $(U_1,\ldots,U_n)\cdot\mu$ with respect to Haar measure on $U(N)^n$. Let $f$ be the density of $\mu$ with respect to Lebesgue measure on $(H_N^R)^{\bar n}$, then $f_U$, the density of $U\mu$ is the average of $f((U_1,\ldots,U_n)\cdot)$ with respect to Haar measure. It follows that~:
\begin{eqnarray*}\text{Ent}(\mu)&=&-\int_{(H_N^R)^{\bar n}} f\log f\, dM
\\&=&-\int_{(H_N^R)^{\bar n}} \frac{f}{f_U}\log \frac{f}{f_U} f_U\, dM-\int_{(H_N^R)^{\bar n}}
f\log f_U dM
\\&=&\text{Ent}(\mu|U\mu)-\int_{(H_N^R)^{\bar n}}f\log f_U dM 
\\&=&\text{Ent}(\mu|U\mu)-\int_{(H_N^R)^{\bar n}}f_U\log f_U dM \quad\text{  by $U(N)^n$ invariance of $dM$}
\\&=&\text{Ent}(\mu|U\mu)+\text{Ent}(U\mu).
\end{eqnarray*}
Now we can use the subbadditivity of $\text{Ent}(U\mu)$ with respect to the projections on the spaces $(H_N^R)^{P_i}$, which gives
$$\text{Ent}(U\mu)\leq \text{Ent}(p_1U\mu)+\ldots +\text{Ent}(p_nU\mu)$$
Letting $N\to\infty, K\to\infty,\epsilon\to 0$ gives the required inequality.

\medskip

{\sl $(6)$} Let $\Xi(N)$ be an approximating sequence, as in the definition of (hyperfinite) orbital free entropy.
Let $\nu_{ \Xi(N)}$ be the probability measure obtained by restricting  $\mathcal {H}_N^n$ to 
$\Gamma_{orb}(\mathbf{X_1},...,\mathbf{X_n}:\Xi_1{(N)},...\Xi_n{ (N)},N,K,\epsilon)$ and normalizing (if the orbital entropy is finite and $N$ is sufficiently large, this measure is well defined), then (recall (\ref{orbit}))
$$\log \gamma_{N,\Xi(N),\epsilon,K}=\text{Ent}(\nu_{ \Xi(N)}|{\mathcal {H}_N^n}).$$
Let $\Psi_{ \Xi(N)}:U(N)^n\to (H_R)^{\bar n}$ the map given by conjugation~:
$$(U_i)_{i=1,\ldots,n}\mapsto (U_i\Xi_i(N)U_i^*)_{i=1,\ldots,n}$$ {\gree we have}
$U\Psi_{\Xi(N)*}(\nu_{ \Xi(N)})=\Psi_{ \Xi(N)*}(\mathcal {H}_N^n)$ therefore, by (\ref{push})
$$\frac{1}{N^2}\text{Ent}(\nu_{ \Xi(N)}|\mathcal {H}_N^n)\leq \frac{1}{N^2}\text{Ent}(\Psi_{ \Xi(N)*}(\nu_{ \Xi(N)})|U\Psi_{ \Xi(N)*}(\nu_{ \Xi(N)})).$$ Since $\Psi_{ \Xi(N)*}(\nu_{ \Xi(N)})$ is an approximating measure for $\mathbf{X_1},...,\mathbf{X_n}$, the right hand side, after taking  limits in $N,\epsilon,K$, is bounded by 
$\tchi_{orb}(\mathbf{X_1},...,\mathbf{X_n})$. The inequality 
$\chi_{orb}\leq \tchi_{orb}$ follows.

Let us now assume that $\tau:=\tau_{\mathbf{X_{1}},...,\mathbf{X_{n}}}$ is extremal. 
Fix $\eta,\epsilon>0$ and an integer $K>0$. Using  Jung's Lemma, and following the proof of Lemma 4.2 in \cite{HiaiMUeda}, we can take $\delta{\leq \epsilon/2},L{\geq K}$ such that,
for all families of sets  $(\Theta_i)_{i=1,\ldots,n}$ of $N\times N$ hermitian matrices such that  for all $i$ {\gree we have} $\tau_{(\Theta_i)}\in V_{\delta,L}(p_i\tau)$
(a fortiori if $\tau_{(\Theta_i)_{i=1,\ldots,n}}\in V_{\delta,L}(\tau)$) {\gree we have}, for $N$ large enough~:
\begin{equation}\label{thetaxi}
\gamma_{N,\Theta,\epsilon/2,K} \leq \gamma_{N,\Xi(N),\epsilon,K}.
\end{equation}
Note also the elementary equality for any $U_1,...,U_n$ unitaries coming from invariance of { the} Haar measure :
\begin{equation}\label{invtheta}
\gamma_{N,\Phi_N(U_1,...,U_n,\Theta),\epsilon/2,K}=\gamma_{N,\Theta,\epsilon/2,K}.
\end{equation}
Then using Lemma \ref{concentration}, if we take $\delta'>0$  sufficiently small and $L'$ sufficiently large,  for any measure $\mu$ on 
$({H}_N^R)^{\bar n}$ such that $\tau_\mu\in V_{\delta',L'}(
\tau)$, {\gree we get}~:
$$\mu(\Gamma_R(\tau,\delta,L,N))\geq 1-\eta.$$
Therefore, by (3.3),
\begin{equation}\label{subadd1}\text{Ent}(\mu|U\mu)\leq (1-\eta)\log \left[U\mu(\Gamma_R(\tau,\delta,L,N))\right]-f(\eta),
\end{equation}
with $f(\eta)=\eta\log\eta+(1-\eta)\log(1-\eta)$.

 Let  $U\Gamma_R(\tau,\delta,L,N)=\{\Theta \ | \ \exists (U_1,..,U_n)\ \Phi_N(U_1,...,U_n,\Theta)\in \Gamma_R(\tau,\delta,L,N)\}.$
The measure $U\mu$ is the image of $\mathcal{H}_N^n\otimes\mu$ by the conjugation map $\Phi_N$, and the set $\Phi_N^{-1}(\Gamma_R(\tau,\delta,L,N))$
is the union over matrix sets~: $$\cup_{\Theta\in { U}\Gamma_R(\tau,\delta,L,N)}\Gamma_{orb}(\mathbf{X_1},...,\mathbf{X_n}:\Theta_1,...\Theta_n,N,L,\delta){\times\{\Theta\}}.$$ 
It follows that~:
\begin{align}\label{subadd2}
\begin{split}U\mu(\Gamma_R(\tau,\delta,L,N))&=
\int_{{ U}\Gamma_R(\tau,\delta,L,N)}\gamma_{N,\Theta,\delta,L}d\mu(\Theta)\\
&\leq{ \int_{{ U}\Gamma_R(\tau,\delta,L,N)}\gamma_{N,\Theta,\epsilon/2,K}d\mu(\Theta)\quad\text{by}\ \delta\leq\epsilon/2,\ L\geq K} \\
&\leq\gamma_{N,\Xi(N),\epsilon,K}\quad\text{by (\ref{thetaxi}) {and (\ref{invtheta})}.}
\end{split}\end{align}
Then combining (\ref{subadd1}), (\ref{subadd2}) and taking limits
 yields the inequality~:
$$\tchi_{orb}\leq (1-\eta)\chi_{orb}.$$
Since $\eta$ is arbitrary, we are done.

\medskip

{\sl $(7)$} The proof is a variant of the one in (6). First take some familiy $\Xi$  of hermitian matricies with $\tau_\Xi\in
 V_{\epsilon,K}(\tau_{\mathbf    {X_1},...,\mathbf{X_n}})$. Replacing $\Xi(N)$  by $\Xi$ in the arguments of the first part of (6) we deduce : $$\log \gamma_{N,\Xi,\epsilon,K}=\text{Ent}(\nu_{ \Xi}|\mathcal {H}_N^n)\leq \text{Ent}(\Psi_{ \Xi*}(\nu_{ \Xi})|U\Psi_{ \Xi*}(\nu_{ \Xi})).$$

Since $\tau_{\Psi_{ \Xi*}(\nu_{ \Xi})}\in V_{\epsilon,K}(\tau_{\mathbf    {X_1},...,\mathbf{X_n}})$ we obtain the following inequality~: $$\log \gamma_{N,\Xi,\epsilon,K}\leq \sup_{{\mu{ \in P}(H_N^R)^{\bar n}\atop \,\tau_\mu\in V_{\epsilon,K}(\tau_{\mathbf    {X_1},...,\mathbf{X_n}})}}
\text{Ent}( \mu |U \mu).$$
This implies the lower bound in the statement.

Assume now that 
$\tau=\tau_{\mathbf    {X_1},...,\mathbf{X_n}}$ extremal. Fix   $\eta,\epsilon,K$ choose $\delta,L$ as in lemma \ref{concentration}. For any $\mu\in P(H_N^R)^{\bar n}$ with  $\tau_\mu\in V_{\delta,L}(\tau)$ {\gree we have}~:
$$\text{Ent}(\mu|U\mu)\leq (1-\eta)\log \left[U\mu(\Gamma_R(\tau,\epsilon,K,N))\right]-f(\eta).
$$

With the same computation as in the proof of (\ref{subadd2}) we get the  inequality : 
$$U\mu(\Gamma_R(\tau,\epsilon,K,N))\leq \sup_{\Xi,\tau_\Xi\in V_{\epsilon,K}(\tau_{\mathbf    {X_1},...,\mathbf{X_n}}) }  \gamma_{N,\Xi,\epsilon,K}.$$
The second inequality of the statement follows.

\medskip

{\sl $(8)$} Let $\mathbf{X_{i}}=\{X_{i1},\ldots,X_{iP_i}\}$ and 
$\mathbf{Y_{i}}=\{Y_{i1},\ldots,Y_{iQ_i}\}$, with $\bar m=\sum_i Q_i$.
By   Kaplansky density theorem, for each $i$, one can find a set of non-commutative polynomials $P_{ij}(\mathbf{X_i}),\ j=1,\ldots,Q_i$  as close as we want in distribution to the set $\mathbf{Y_i}$.
{For such a family, we write : $$\mathbf{P_{i}}(\mathbf{X_i})=(P_{i1}(\mathbf{X_i}),...,P_{iQ_i}(\mathbf{X_i})),$$ $$\mathbf{P}(\mathbf{X_1},...,\mathbf{X_n})=(\mathbf{P_{1}}(\mathbf{X_1}),...,\mathbf{P_{n}}(\mathbf{X_n})).$$
}

{Let $\epsilon,K>0$.
One can find polynomials $P_{ij}(\mathbf{X_i}),\ j=1,\ldots,Q_i$, a real $\delta>0$ sufficiently small and an integer $L$ sufficiently large such that 
 for all
 $\mu $ probability  measure on $(H_N^R)^{\bar n}$ in $V_{\delta,L}(\tau_{\mathbf{X_1},\ldots,\mathbf{X_n}})$
{\gree we have} $\mathbf{P}_{\star}\mu\in V_{\epsilon,K}(\tau_{\mathbf{Y_1},\ldots,\mathbf{Y_n}})$


{Since $\Phi_n((U_1,...,U_n),(\mathbf{P_{1}}(\mathbf{X_1}),...,\mathbf{P_{n}}(\mathbf{X_n}))=(\mathbf{P_{1}}(U_1\mathbf{X_1}U_1^*),...\mathbf{P_{n}}(U_n\mathbf{X_n}U_n^*)),$} it is clear that $U\mathbf{P}_{\star}\mu=\mathbf{P}_{\star}U\mu$.
By (\ref{push}) {\gree we have}
$\text{Ent}(\mathbf{P}_{\star}\mu|U\mathbf{P}_{\star}\mu)\geq \text{Ent}(\mu|U\mu)$, therefore
$$\frac{1}{N^2}\sup_{\nu,\tau_\nu\in V_{\epsilon,K}(\tau_{\mathbf{Y_1},\ldots,\mathbf{Y_n}})}\text{Ent}(\nu|U\nu)\geq {\frac{1}{N^2} \sup_{\mu,\tau_\mu\in V_{\delta,L}(\tau_{\mathbf{X_1},\ldots,\mathbf{X_n}})}}\ \text{Ent}(\mu|U\mu).$$
{ Now  take a $\limsup$  then infimum over, successively, $\delta,L,\epsilon,K,$ to get the result}.
 }

\medskip

{\sl $(9)$}
First, choose a subsequence $N_m$ and $\mu_m $  probability measures on
$(H_{N_m}^R)^{\bar n}$ such that { $(\tau_{\mu_m })$ converges weakly to $\tau$ and :}
\begin{align*}\tchi_{orb}&(\mathbf{X_1};...;\mathbf{X_n})=\lim_{m\to \infty}\left(\frac{1}{N_m^2} \text{Ent}( \mu_m |U \mu_m)\right).\end{align*}
Without loss of generality, we assume that this orbital entropy is finite.
We follow arguments close to the proof  of  Lemma 3.4 in \cite{HiaiMUeda}. 
Remark that in the definition of $U\mu$ we can replace the unitary group $U(N)$ by $SU(N)$, since $U(N)$ acts by conjugation.
 Let $f_m(\mathbf{M})$ be the density of $\mu_m$ with respect to $U\mu_m$ (which exists if $m$ is sufficiently large).
Then for almost all values of $\mathbf{M}$, the function
 $$g_m(U_1,\ldots U_n,\mathbf{M})=f_m(U_1\mathbf{M}_1U_1^*,\dots,U_n\mathbf{M}_nU_n^*)$$ is a probability density in the variables 
$U_1,\ldots U_n$, with respect to the Haar measure $\mathcal{SH}_{N_{ m}}^n$ on $SU(N_m)^n$. 
For $\mathbf{M}$ let $\pi_{\mathbf{M}}$, be a probability measure on $SU(N_m)^{n}\times SU(N_m)^{n} $,  which is  an optimal coupling between $g_m(U_1,\ldots U_n,\mathbf{M})\mathcal{SH}_{N_{ m}}^n$ and $\mathcal{SH}_{N_{m}}^n$ for the { geodesic distance} on ${SU(N_m)}^{ n}$. This means that the marginals of the measure $\pi_{\mathbf{M}}$ on the two components of  
$SU(N_m)^{n}\times SU(N_m)^{n} $ are the measures $g_m(U_1,\ldots U_n,\mathbf{M})\mathcal{SH}_{N_{m}}^n$ and $\mathcal{SH}_{N_{m}}^n$, and the squared Wasserstein distance between the measures
$g_m(U_1,\ldots U_n,\mathbf{M})\mathcal{SH}_{N_{ m}}^n$ and $\mathcal{SH}_{N_{ m}}^n$ is
$$\int_{SU(N_m)^{n}\times SU(N_m)^{n}}{\left[d_{\text{geod}}((U_1,...,U_n),(V_1,...,V_n))\right]^2} 
d\pi_{\mathbf{M}}(U,V)$$
Such a measure can be constructed  measurably with respect to  $\mathbf{M}$ (see e.g. corollary 5.22 in \cite{Vill}).
We thus deduce { an estimate for the non-commutative 2-Wasserstein distance :}
\begin{align*}{ d_{W}}&{(\tau_{\mu_m},\tau_{U \mu_m})^2}\\&\leq \int dU \mu_m(\mathbf{M})\qquad \int d\pi_{\mathbf{M}}(\mathbf{U},\mathbf{V})\sum_{i=1}^{n}\sum_{j=1}^{P_i} \frac{1}{N_m}||U_i\mathbf{M}_{ij}U_i^*-V_i\mathbf{M}_{ij}V_i^*||^2_{HS}\\ &\leq \int dU \mu_m(\mathbf{M}) \int d\pi_{\mathbf{M}}(\mathbf{U},\mathbf{V})4R^2\tilde{ P}\frac{1}{N_m} \sum_{i=1}^{n}||\mathbf{U}_i-\mathbf{V}_i||_{HS}^2\\ &{\leq \int dU \mu_m(\mathbf{M}) \int d\pi_{\mathbf{M}}(\mathbf{U},\mathbf{V})4R^2\tilde{ P}\frac{1}{N_m} \left[d_{\text{geod}}((U_1,...,U_n),(V_1,...,V_n))\right]^2,}\end{align*}

{where we used the fact that the Hilbert-Schmidt distance can be majorized by the geodesic distance.}
Now using the Talagrand inequality of \cite{OttoVillani} on $SU(N_m)^{n}$,  as in Proposition 3.5 of \cite{HiaiMUeda} {
 } {\gree we get} :
\begin{eqnarray*}
{d_{W}(\tau_{\mu_m},\tau_{U \mu_m})^2}&\leq& (4R)^2\tilde{ P}\times
\\&&\qquad\int dU \mu_m(\mathbf{M}) \frac{-1}{N_m^2} \text{Ent}(g(U_1,\ldots,U_n,\mathbf{M})\mathcal{SH}_{N_{ m}}^n(U)|\mathcal{SH}_{N_{ m}}^n(U))
\end{eqnarray*}

{
Now we can use the fact that $U\mu_m$ is invariant by the action of $U(N)^n$ and interchange the order of integration to get

\begin{align*}
&\int \text{Ent}(g(U_1,\ldots,U_n,\mathbf{M})\mathcal{SH}_{N_{ m}}^n(U)|\mathcal{SH}_{N_{m}}^n(U))dU \mu_m(\mathbf{M}) 
 \\
&=\int \int f_m(U_1\mathbf{M}_1U_1^*,\dots,U_n\mathbf{M}_nU_n^*)\log f_m(U_1\mathbf{M}_1U_1^*,\dots,U_n\mathbf{M}_nU_n^*)d\mathcal{H}_{N_{ m}}^n(\mathbf{U})
 dU\mu_m(\mathbf{M})  
 \\&
 =\text{Ent}(\mu_m|U\mu_m), \end{align*}
thus
$${ d_{W}(\tau_{\mu_m},\tau_{U \mu_m})^2}\leq\frac{-(4R)^2\tilde P}{N_m^2} \text{Ent}(\mu_m|U\mu_m).$$}
{ By our choice of $\mu_m$,} the noncommutative distribution of the random matrix sets $\mathbf{M}$ under
 $\mu_m$ converges weakly to $\tau$ as $N\to\infty$. Let us check that similarly, under 
$U\mu_m$ this noncommutative distribution converges  weakly to $\tau_{free}$.
This is a consequence of Remark 3.2 in \cite{Collins02}. Indeed, there it is proved that 
 $\mathbf{M}$ is asymptotically free from $\{U_1\}$,..., $\{U_n\}$ (independant Haar unitaries)  provided the distribution of $\mathbf{M}$ concentrates around its mean. But this concentration is provided by Lemma \ref{concentration}.
We leave the easy but tedious details to the reader. 

{ As a consequence of Talagrand's inequality, the only if part of the characterization of freeness is obvious. Now assume $\tau=\tau_{free}$ and take $\mu_m$ as above so that now $\tau_{U\mu_m}$ tends weakly to  $\tau=\tau_{free}$. Thus for $m$ large enough so that $\tau_{U\mu_m}$ $\epsilon,K$ approximates $\tau$ we have  $$0=\left(\frac{1}{N_m^2} \text{Ent}( U\mu_m |U \mu_m)\right)\leq \frac{1}{N_m^2}\sup_{{\mu \in P(H_{N_m}^R)^{\bar n}\atop \,\tau_\mu\in
 V_{\epsilon,K}(\tau_{\mathbf    {X_1},...,\mathbf{X_n}})} } \text{Ent}( \mu |U \mu).$$
 
 As a consequence taking a { limit in $m$} and then in $\epsilon,K$ since they are arbitrary in the argument above, {\gree we get} $\tchi_{orb}(\mathbf{X_{1}},\dots,\mathbf{X_{n}})=0$.}

\end{proof} 
 
 \section{Preliminaries about entropy, marginals and unitary invariant versions of a measure}
Before giving several other generalizations of orbital entropy, we start with some preliminary results.

Let $\mu\in P((H_R^N)^{\bar{n}})$, considered as the probability distribution of a  family of random matrices  $(\mathbf{A_1},...,\mathbf{A_n})$, where each  $\mathbf{A_i}$ consists in a bunch of variables like $\mathbf{X_i}$. Again $U\mu$ is then the law of $(U_1\mathbf{A_1}U_1^*,...,U_n\mathbf{A_n}U_n^*)$ where $U_i$ are independent  variables distributed with respect to the Haar measure ${\mathcal H}_N$ of the unitary group $U(N)$.
More generally, we consider partial conjugations in the following way~:
if  $\pi:[1,n]\to [1,\ell]$ is a surjective map (equivalently, we can consider the partition  $\Pi=\{\pi^{-1}(i)\}_{i=1,\ldots,\ell}$ of $[1,n]$ which it defines) we denote $U^{\pi}\mu(=U^\Pi\mu)$ the law of $(U_{\pi(1)}\mathbf{A_1}U_{\pi(1)}^*,...,U_{\pi(n)}\mathbf{A_n}U_{\pi(n)}^*)$
where the $U_i,i=1\ldots,\ell$ are independent Haar random unitary matrices.
We will write $U^G$ for the global unitary invariant version, corresponding to  $\Pi=\{\{1,...,n\}\}$.
It is clear that for any absolutely continuous measure $\mu$ the measure
$U^{\pi}\mu$ is absolutely continuous. 

\begin{lemma}\label{UEnt}
\begin{enumerate}
\item[(i)]
 Let $\mu,\nu\in P((H_R^N)^{\tilde{n}})$ with $U^{\pi}\nu=\nu$, then $$Ent(\mu|\nu)=Ent(\mu|U^{\pi}\mu)+Ent(U^{\pi}\mu|\nu).$$
\item[(ii)] Let $\mu\in P((H_R^N)^{\bar{n}})$ and $\nu=\bigotimes_i\nu_i$, $\nu_i\in P((H_R^N)^{\tilde{P_i}})$. We denote   $q_1\mu$ and $\ q_2\mu$ the marginals for the bunch of variables corresponding, respectively,  to $(\mathbf{A_1},...,\mathbf{A_m})$ and $(\mathbf{A_{m+1}},...,\mathbf{A_n})$, then $$\text{Ent}(\mu|\nu)=\text{Ent}(\mu|q_1\mu\otimes q_2\mu)+\text{Ent}(q_1\mu\otimes q_2\mu|\nu).$$
\item[(iii)]With the notations of (ii) and $V=U^\Pi$ for $\Pi=\{\{1,..,m\},\{m+1,..,n\}\}$  we have :
$$\text{Ent}(V\mu|U\mu)\leq  \text{Ent}(q_1U^G\mu|Uq_1\mu)+\text{Ent}(q_2U^G\mu|Uq_2\mu).$$
\end{enumerate}
\end{lemma}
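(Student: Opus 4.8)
The plan is to read (i) and (ii) as two instances of the same Pythagorean identity for relative entropy with respect to a ``projection'', and then to build (iii) out of them together with the data-processing inequality (\ref{push}).

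For (i) I would compute with densities relative to the invariant reference. Put $g=d\mu/d\nu$. Since $U^{\pi}\nu=\nu$ and $U^{\pi}$ is the average over the conjugation action of $G=U(N)^{\ell}$, the density of $U^{\pi}\mu$ with respect to $\nu$ is the orbit average $g_{\pi}(x)=\int_{G}g(s^{-1}\cdot x)\,ds$, which is a $G$-invariant function. Splitting
$$-\int g\log g\,d\nu=-\int g\log\frac{g}{g_{\pi}}\,d\nu-\int g\log g_{\pi}\,d\nu,$$
the first term is $\text{Ent}(\mu|U^{\pi}\mu)$ because $g/g_{\pi}=d\mu/d(U^{\pi}\mu)$; in the second, $\log g_{\pi}$ is $G$-invariant, so invariance of $\nu$ lets us replace $g$ by its average $g_{\pi}$, giving $\int g\log g_{\pi}\,d\nu=\int g_{\pi}\log g_{\pi}\,d\nu=-\text{Ent}(U^{\pi}\mu|\nu)$. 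Statement (ii) is the same computation with the projection $\mu\mapsto q_1\mu\otimes q_2\mu$ and the product reference $\nu=\bigotimes_i\nu_i$: writing $g_1,g_2$ for the marginal densities, one has $d(q_1\mu\otimes q_2\mu)/d\nu=g_1\otimes g_2$, and the cross term $\int g\log(g_1\otimes g_2)\,d\nu$ separates and, after integrating out the complementary variables, equals $\int(g_1\otimes g_2)\log(g_1\otimes g_2)\,d\nu$. (Both are the Csiszar Pythagorean identity for an I-projection.)

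For (iii) set $\rho=V\mu$ and $\sigma=U\mu$. I would first rewrite the statement in symmetric form. Within a single block-group common conjugation coincides with global conjugation, so the marginals of $V\mu$ are exactly $q_1\rho=q_1U^{G}\mu$ and $q_2\rho=q_2U^{G}\mu$, while $q_1\sigma=Uq_1\mu$ and $q_2\sigma=Uq_2\mu$; thus the claim is the subadditivity $\text{Ent}(\rho|\sigma)\le\text{Ent}(q_1\rho|q_1\sigma)+\text{Ent}(q_2\rho|q_2\sigma)$. The point to overcome is that $\sigma=U\mu$ is not a product across the two groups — independent conjugation does not decorrelate blocks that are correlated under $\mu$ (they keep, for instance, matching spectra) — so one cannot invoke subadditivity of relative entropy directly. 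To repair this I introduce the product reference $\sigma_1\otimes\sigma_2$ with $\sigma_i=q_i\sigma$, using the two structural identities $\sigma=U\rho$ (further independent conjugation of $V\mu$ reabsorbs the common unitaries into $U\mu$) and $\sigma_1\otimes\sigma_2=U(\rho_1\otimes\rho_2)$. Applying (i) with $P=U$ and the $U$-invariant reference $\sigma_1\otimes\sigma_2$ gives $\text{Ent}(\rho|\sigma_1\otimes\sigma_2)=\text{Ent}(\rho|\sigma)+\text{Ent}(\sigma|\sigma_1\otimes\sigma_2)$, and applying (ii) gives $\text{Ent}(\rho|\sigma_1\otimes\sigma_2)=\text{Ent}(\rho|\rho_1\otimes\rho_2)+\text{Ent}(\rho_1|\sigma_1)+\text{Ent}(\rho_2|\sigma_2)$. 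Subtracting,
$$\text{Ent}(\rho|\sigma)-\text{Ent}(\rho_1|\sigma_1)-\text{Ent}(\rho_2|\sigma_2)=\text{Ent}(\rho|\rho_1\otimes\rho_2)-\text{Ent}(\sigma|\sigma_1\otimes\sigma_2),$$
so (iii) reduces to showing this right-hand side is $\le 0$.

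The remaining inequality is exactly the statement that the mutual information between the two groups does not increase under the conjugation channel $\rho\mapsto U\rho=\sigma$, and this is a data-processing inequality. Since $U$ conjugates the first group and the second group by disjoint families of independent Haar unitaries, it is a local channel; realizing it as the pushforward $\Phi_{N*}(\mathcal H_N^n\otimes\,\cdot\,)$ and applying (\ref{push}) to the pair $\mathcal H_N^n\otimes\rho$ and $\mathcal H_N^n\otimes(\rho_1\otimes\rho_2)$ yields $\text{Ent}(\sigma|\sigma_1\otimes\sigma_2)=\text{Ent}(U\rho|U(\rho_1\otimes\rho_2))\ge\text{Ent}(\rho|\rho_1\otimes\rho_2)$, which is the required sign. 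I expect this last step — together with the realization that $U\mu$ fails to be a product and must be replaced by $\sigma_1\otimes\sigma_2$ at the cost of a mutual-information term controlled by (\ref{push}) — to be the main obstacle; the identities (i), (ii) and the bookkeeping of marginals are routine.
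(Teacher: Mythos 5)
Your parts (i) and (ii) are correct and are essentially the paper's own computation: one splits $-\int g\log g\,d\nu$ into the term against $\log g_\pi$ and the remainder, and uses invariance of $\nu$ (resp.\ the product structure of $\nu$) to replace $g$ by its average $g_\pi$ (resp.\ by $g_1\otimes g_2$) in the cross term. Part (iii), however, is where you take a genuinely different route, and there is a real gap in it.

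Your reduction subtracts the two Pythagorean identities
\begin{equation*}
\text{Ent}(\rho|\sigma_1\otimes\sigma_2)=\text{Ent}(\rho|\sigma)+\text{Ent}(\sigma|\sigma_1\otimes\sigma_2),\qquad
\text{Ent}(\rho|\sigma_1\otimes\sigma_2)=\text{Ent}(\rho|\rho_1\otimes\rho_2)+\text{Ent}(\rho_1|\sigma_1)+\text{Ent}(\rho_2|\sigma_2).
\end{equation*}
Since all terms are $\le 0$, this subtraction is legitimate exactly when the common left-hand side $\text{Ent}(\rho|\sigma_1\otimes\sigma_2)$ is finite, and this finiteness fails generically: conjugation, whether by common or by independent unitaries, preserves the orbit data (joint spectra) of each block, so whenever these data are continuously correlated under $\mu$, the measure $\rho=V\mu$ is singular with respect to $\rho_1\otimes\rho_2$ and $\sigma_1\otimes\sigma_2$. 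Concretely, take $n=2$, $m=1$ (so that $V=U$) and $\mu$ the law of a pair $(A,A)$ with $A$ absolutely continuous: then $\text{Ent}(V\mu|U\mu)=0$ and both terms on the right of (iii) are $0$, so the inequality holds with content $0\le 0$; yet $\rho$ gives full mass to the event that the two blocks have equal spectra, which is null for $\rho_1\otimes\rho_2$ and for $\sigma_1\otimes\sigma_2$, so $\text{Ent}(\rho|\rho_1\otimes\rho_2)=\text{Ent}(\sigma|\sigma_1\otimes\sigma_2)=\text{Ent}(\rho|\sigma_1\otimes\sigma_2)=-\infty$ and your two identities degenerate to $-\infty=-\infty$, from which nothing can be extracted. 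This degenerate case is not marginal: in the paper's applications (e.g.\ Theorem \ref{chiorbep}) $\mu$ approximates a state whose blocks are correlated, so the cross-block mutual-information terms you introduce are typically infinite precisely when both sides of (iii) are finite and the inequality is needed.

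The paper's proof is immune to this because it never introduces cross-block mutual information: assuming without loss of generality $V\mu\ll U\mu$ with density $\rho_V$, it uses invariance of $U\mu$ to view $R(U_1,\dots,U_n,\mathbf{A})=\rho_V(U_1\mathbf{A_1}U_1^*,\dots,U_n\mathbf{A_n}U_n^*)$ as a conditional probability density on $U(N)^n$ for $U\mu$-a.e.\ $\mathbf{A}$, applies subadditivity of classical entropy on the product group $U(N)^m\times U(N)^{n-m}$ fiberwise (an inequality valid in $[-\infty,0]$ with no finiteness hypothesis), and then integrates in $\mathbf{A}$. Your final data-processing step, $\text{Ent}(U\rho|U(\rho_1\otimes\rho_2))\ge\text{Ent}(\rho|\rho_1\otimes\rho_2)$, is correct in isolation, but it cannot rescue identities that have become vacuous; to repair your argument you would have to disintegrate over the orbit data and run it fiber by fiber, which in substance reconstructs the paper's proof.
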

\begin{proof}

{\sl $(i)$}
This is a generalization to relative entropy of an equality in the proof of Theorem \ref{prop} (5) above.
Without loss of generality we assume $\mu\ll\nu$ {since if we don't have both  $\mu\ll U^{\pi}\mu$ and $ U^{\pi}\mu\ll \nu$, the right hand side is $-\infty$ and the equality is true if we don't have $\mu\ll\nu$, so that in any case we can assume $\mu\ll\nu$.} Consider $\rho=\frac{d\mu}{d\nu}$. Since $\nu$ 
is unitarily invariant, we have $U^{\pi}\mu\ll U^{\pi}\nu=\nu$. Moreover, 
\begin{eqnarray*}
\rho_U(\mathbf{A_1},...,\mathbf{A_n})&:=&
\frac{dU^{\pi}\mu}{d\nu}(\mathbf{A_1},...,\mathbf{A_n}) \\
&=&\int d{\mathcal H}_N^{ \ell}(U_1,...,U_{ \ell})\rho(U_{\pi(1)}\mathbf{A_1}U_{\pi(1)}^*,...,U_{\pi(n)}\mathbf{A_n}U_{\pi(n)}^*).\end{eqnarray*}

Using (\ref{push}) we have $\text{Ent}(\mu|\nu)\leq \text{Ent}(U^{ \pi}\mu|\nu)$ and we can thus compute : \begin{align*}\text{Ent}(\mu|\nu)&=-\int \rho \ln(\rho) d\nu=-\int \rho \ln(\rho_U)d\nu-\int \rho \ln(\frac{\rho}{\rho_U})d\nu \\&= -\int \rho_U \ln(\rho_U)d\nu+\text{Ent}(\mu|U\mu)\\ &=\text{Ent}(U\mu|\nu)+\text{Ent}(\mu|U\mu),\end{align*}
where, in the third line, we used  unitary invariance to replace $\rho$ by $\rho_U$. The reverse implication starting from finiteness of the left hand side is also clear.

\medskip

{\sl $(ii)$} The proof is similar to {\sl (i)}. In order  to solve finiteness issues, one can again use (\ref{push}) to get $\text{Ent}(\mu|\nu)\leq\text{Ent}(q_i\mu| q_i\nu)$.

\medskip

{\sl $(iii)$}
 The inequality comes from subadditivity of entropy. Indeed
 consider, without loss of generality, $\rho_V$
  the density of $V\mu$ with respect to $U\mu$.
Using unitary invariance of $U\mu$ 
 we get :
\begin{align*}\text{Ent}&(V\mu|U\mu)=-\int \rho_V\ln(\rho_V)dU\mu\\&
=-\int dU\mu(\mathbf{A}) \int R(U_1,...,U_n,\mathbf{A})\ln(R(U_1,...,U_n,\mathbf{A}))d{\mathcal H}_N^n(U_1,...,U_n),\end{align*}
where, for a.e. $\mathbf{A}=(\mathbf{A_1},...,\mathbf{A_n})$, the quantity $$R(U_1,...,U_n,\mathbf{A})=\rho_{V}(U_1\mathbf{A_1}U_1^*,...,U_n\mathbf{A_n}U_n^*)$$ is  a probability density on 
$U(N)^n$.  Let $R_1,R_2$ be the densities of marginals, namely, with obvious notations, $$R_1(\mathbf{U_2},\mathbf{A})=\int d{\mathcal H}_N^m(\mathbf{U_1})R(\mathbf{U_1},\mathbf{U_2},\mathbf{A}),$$ $$R_2(\mathbf{U_1},\mathbf{A})=\int d{\mathcal H}_N^{n-m}(\mathbf{U_2})R(\mathbf{U_1},\mathbf{U_2},\mathbf{A}).$$
{\gree we have} $$R_2(\mathbf{U_1},\mathbf{A})=R_2(\mathbf{I},U_1\mathbf{A_1}U_1^*,\ldots, U_m\mathbf{A_m}U_m^*,\mathbf{A_{m+1},\ldots,A_n}).$$ Moreover
 $$R_2(\mathbf{I},\mathbf{A})dU\mu(\mathbf{A})=dV\mu(\mathbf{A})$$ is a probability measure with marginal $q_1V\mu=q_1U^G\mu.$ 
Using the subadditivity of ordinary entropy relative to a product measure, we get~:
\begin{align*}\text{Ent}(V\mu|U\mu)&\leq-\int dU\mu(\mathbf{A})R_2(\mathbf{I},\mathbf{A})\ln (R_2(\mathbf{I},\mathbf{A}))
\\&\qquad -\int dU\mu(\mathbf{A})R_1(\mathbf{I},\mathbf{A})\ln (R_1(\mathbf{I},\mathbf{A}))
\\&\leq \text{Ent}(q_1U^G\mu|Uq_1\mu)+\text{Ent}(q_2U^G\mu|Uq_2\mu).
\end{align*}

\end{proof}

\section{Variants and extensions}
\subsection{Overview}
The main drawback of our definition of orbital free entropy is that
we are unable to prove equality in part (5) of Theorem \ref{prop}.
In order to overcome this problem,
as mentionned at the beginning of section 7, several other generalizations of orbital entropy may be considered.
We will describe below two variants which we call maximal mutual entropy and I-mutual entropy. The last one satisfies the required additivity property however we lose the fact that it depends only on the subalgebras generated by the subset of variables.
Let us describe briefly the content of this section.
First, we can consider, as for Voiculescu's entropy, a variant of free entropy {\bl in the presence of another set of variables}, which plays a dummy role in the definition. This will be considered in section \ref{presence}.
Instead of using the relative entropy of $\mu$, an approximating measure, with respect to its unitary invariant mean $U\mu$, we can consider the relative entropy with respect to the product of the marginal distributions 
 of $U\mu$ with respect to the subsets. This yields a quantity which we call maximal mutual entropy, and which we consider in section \ref{maxmut}. Again this quantity depends only on the $W^*$ algebras generated by the subsets, and is subadditive.
Another alternative  is to use  Ciszar's I-projection first and then to take the relative entropy of this specific measure with respect to the tensor product of its marginals (which are automatically unitary invariant in this case). This gives  what we call I-mutual entropy, studied in section \ref{section_Imut}. 
This quantity satisfies  a strong  additivity property (property below), which generalizes the additivity of 
the orbital entropy {\bl of} \cite{HiaiMUeda}. Unfortunately, we are not able to prove
 that {\bl it} depends only on the $W^*$ algebras generated by the subsets.
 All these entropies coincide with  orbital entropy defined in \cite{HiaiMUeda} in the context they define it. 
It is plausible that they always coincide, although we do not have a proof of this fact at this stage.

\subsection{Orbital  entropy in the presence of other variables}\label{presence}
As in section 6, we consider finite sets of non-commutative random variables 
$\mathbf{X_i}=\{X_{i1},...,X_{iP_i}\}$ for $i=1,\ldots,n$, and $\bar{n}=\sum_i P_i$, while  $\mathbf{Y}=\{Y_1,\ldots,Y_t\}$ is likewise a multivariable containing $t$ variables.
Their joint non-commutative (tracial) distribution is  $\tau=\tau_{\mathbf{X_1};...;\mathbf{X_n};\mathbf{Y}}\in \mathcal S_R^{\bar{n}+t}$.
We will use the notation :$$A_{N,\epsilon,K}(\tau)=\{ \mu\in P((H_R^N)^{\bar{n}+t})\ | \ \tau_{\mu} \in V_{\epsilon,K}(\tau) \}.$$
Also we denote $p\mu$ the marginal distribution of $\mu$ on the $\mathbf X$ variables.
 \begin{definition}
The \textit{\textbf{free orbital entropy } of $\mathbf{X_1},...,\mathbf{X_n}$ \textbf{in the presence of} $\mathbf{Y}$ is, if $\tau=\tau_{\mathbf    {X_1},...,\mathbf{X_n},\mathbf{Y}}$ }:
     $$\tchi_{orb}(\mathbf{X_1};...;\mathbf{X_n}:\mathbf{Y})
                  =\sup_{ R\geq\mathcal{R}(\tau)} \lim_{K\to\infty,\epsilon\to 0}
                \limsup_{N\to\infty}
\left(\frac{1}{N^2}
\sup_{\mu \in A_{N,\epsilon,K}
(\tau)}
\text{Ent}( p\mu |Up\mu)\right).$$
\end{definition}

The orbital free entropy in the presence of {\bl other variables} satisfies  properties similar to the ones of Theorem \ref{prop}, the proofs being
easy variations on the proofs for the orbital free entropy. 
We state here only an improved version of the additivity property. 

\medskip
\begin{theorem}\label{chiorbep}
\begin{align*}&\tchi_{orb}(\mathbf{X_1};...;\mathbf{X_m};\mathbf{X_{m+1}};...;\mathbf{X_{n}}:\mathbf{Y})\leq  {\bl \tchi_{orb}(\mathbf{X_1}\cup...\cup\mathbf{X_m};\mathbf{X_{m+1}}\cup...\cup\mathbf{X_{n}}:\mathbf{Y})+}\\ &\tchi_{orb}(\mathbf{X_1};...;\mathbf{X_m}:\mathbf{X_{m+1}}\cup...\cup\mathbf{X_{n}}\cup\mathbf{Y})+\tchi_{orb}(\mathbf{X_{m+1}};...;\mathbf{X_n}:\mathbf{X_{1}}\cup...\cup\mathbf{X_{m}}\cup\mathbf{Y}).\end{align*}
\end{theorem}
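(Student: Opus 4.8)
The plan is to prove the inequality at the level of a single approximating measure and then pass to the suprema and limits defining the three entropies. Fix $R$, integers $N,K$, a real $\epsilon>0$ and a measure $\mu\in A_{N,\epsilon,K}(\tau)$ on $(H_R^N)^{\bar n+t}$, and write $p\mu$ for its marginal on the $\mathbf X$-variables. The whole argument rests on applying the chain rule of Lemma~\ref{UEnt}(i) and the subadditivity estimate of Lemma~\ref{UEnt}(iii) to the measure $p\mu\in P((H_R^N)^{\bar n})$, together with the elementary observation that conjugation by a single unitary preserves all mixed moments, so that $U^G\mu$ is again an approximating measure for $\tau$.

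First I would decompose the integrand defining the left-hand side. Let $V=U^\Pi$ for $\Pi=\{\{1,\dots,m\},\{m+1,\dots,n\}\}$. Since $Up\mu$ is invariant under the full per-bunch conjugation by $U(N)^n$, it is a fortiori invariant under the subgroup realizing $V$, i.e. $V(Up\mu)=Up\mu$. Lemma~\ref{UEnt}(i), applied with $\nu=Up\mu$ and $U^\pi=V$, then yields
$$\text{Ent}(p\mu\,|\,Up\mu)=\text{Ent}(p\mu\,|\,Vp\mu)+\text{Ent}(Vp\mu\,|\,Up\mu).$$
Conjugating the two super-bunches $\mathbf X_1\cup\dots\cup\mathbf X_m$ and $\mathbf X_{m+1}\cup\dots\cup\mathbf X_n$ by independent unitaries is exactly the operation $V$ applied to $p\mu$, so the first summand is bounded by the inner supremum defining $\tchi_{orb}(\mathbf X_1\cup\dots\cup\mathbf X_m;\mathbf X_{m+1}\cup\dots\cup\mathbf X_n:\mathbf Y)$ (with $\mu$ itself as witness). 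For the second summand I would invoke Lemma~\ref{UEnt}(iii) with its $\mu$ replaced by $p\mu$, which gives
$$\text{Ent}(Vp\mu\,|\,Up\mu)\leq \text{Ent}(q_1U^Gp\mu\,|\,Uq_1p\mu)+\text{Ent}(q_2U^Gp\mu\,|\,Uq_2p\mu),$$
where $q_1,q_2$ are the marginals on the first $m$ and the last $n-m$ bunches.

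The crux is then to recognise the two terms on the right as orbital integrands for admissible witnesses. I would treat $\text{Ent}(q_1U^Gp\mu\,|\,Uq_1p\mu)$; the other is symmetric. Take $\mu'=U^G\mu$ as the witness for $\tchi_{orb}(\mathbf X_1;\dots;\mathbf X_m:\mathbf X_{m+1}\cup\dots\cup\mathbf X_n\cup\mathbf Y)$: it lies in $A_{N,\epsilon,K}(\tau)$ because global conjugation leaves every trace of a polynomial unchanged. Since conjugation by a single unitary commutes with marginalisation, the marginal of $\mu'$ on $\mathbf X_1,\dots,\mathbf X_m$ is $q_1U^Gp\mu=U^Gq_1p\mu$. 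Moreover, conjugating first globally and then bunch-by-bunch independently has the same law as conjugating bunch-by-bunch independently, the common global unitary being absorbed by the independent Haar unitaries, so the per-bunch average of $U^Gq_1p\mu$ equals $Uq_1p\mu$. Hence $\text{Ent}(q_1U^Gp\mu\,|\,Uq_1p\mu)$ is precisely the orbital integrand at $\mu'$, and is bounded by the inner supremum defining the second orbital entropy; likewise for the third.

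Assembling the three bounds, for every $\mu\in A_{N,\epsilon,K}(\tau)$ the integrand $\text{Ent}(p\mu|Up\mu)$ is dominated by the sum of the three inner suprema at the same $N,\epsilon,K,R$. Taking the supremum over $\mu$, dividing by $N^2$, then using subadditivity of $\limsup$ in $N$, the monotone limits in $K\to\infty,\epsilon\to 0$, and finally $\sup_R$ (all three terms involve the same joint distribution $\tau$, hence the same admissible radii), gives the claimed inequality. The only genuinely delicate point is the identification in the previous paragraph, namely checking that Lemma~\ref{UEnt}(iii) produces exactly the globally symmetrised numerator $U^G$ that matches an admissible orbital witness; everything else is the bookkeeping of marginals and order of limits already used in the proof of Theorem~\ref{prop}.
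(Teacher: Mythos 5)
Your proof is correct and follows essentially the same route as the paper: the chain rule of Lemma \ref{UEnt}(i) applied with the two-block partition $V$ (using $V(Up\mu)=Up\mu$), followed by the subadditivity estimate of Lemma \ref{UEnt}(iii), and then suprema and limits. The only organizational difference is that the paper first reduces to the case $\mu=U^G\mu$ (via Lemma \ref{UEnt}(i) with the global partition and negativity of relative entropy), whereas you keep $\mu$ general and instead exhibit $U^G\mu$ as the admissible witness for the last two terms, using the absorption identity $U(U^G\rho)=U\rho$; both devices handle the same point.
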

\begin{proof}
Write for $\mu$ in $ A_{N,\epsilon,K}(\tau)$ $Vp\mu$ as in lemma \ref{UEnt} (iii) the unitary invariant variant for blocks. Note that $pU^G\mu=U^Gp\mu$ and~: $$Ent(p\mu|Up\mu)=Ent(p\mu|U^Gp\mu)+ Ent(U^Gp\mu|Up\mu)\leq Ent(U^Gp\mu|Up\mu),$$ (from lemma \ref{UEnt} (i)) so that, since $U^G\mu\in A_{N,\epsilon,K}(\tau)$, we may assume $\mu=U^G\mu$ when we bound orbital entropy.
Applying lemma \ref{UEnt} (i) and (iii) we get the concluding estimate  \begin{align*}\text{Ent}(p\mu|Up\mu)&=\text{Ent}(p\mu|Vp\mu)+\text{Ent}(Vp\mu|Up\mu)\\&\leq \text{Ent}(p\mu|Vp\mu)+ \text{Ent}(q_1\mu|Uq_1\mu)+\text{Ent}(q_2\mu|Uq_2\mu).\end{align*}
\end{proof}
\subsection{Maximal mutual entropy}\label{maxmut}
We use the same notations as in the preceding section, and denote $p_1,\ldots,p_n$ the projections on the sets of variables 
$\mathbf{X_1};...;\mathbf{X_n}$.

  \begin{definition}
     The \textbf{\textit{free maximal mutual entropy of $\mathbf{X_1},...,\mathbf{X_n}$ in the presence of $\mathbf{Y}$}} is , if $\tau=\tau_{\mathbf{X_1},...,\mathbf{X_n},\mathbf{Y}}$~:
          \begin{align*}&\tchi_{Mmut}(\mathbf{X_1};...;\mathbf{X_n}:\mathbf{Y})=\\
&\sup_{ R\geq\mathcal{R}(\tau)}\lim_{K\to\infty,\epsilon\to0}
    \limsup_{N\to\infty} \left(\frac{1}{N^2}\sup_{\mu\in A_{N,\epsilon,K}(\tau)} \text{Ent}(p \mu |p_1Up \mu\otimes... \otimes p_nUp \mu \right)\end{align*}
    If $\mathbf{Y}$ is empty we just write 
$\tchi_{Mmut}(\mathbf{X_1};...;\mathbf{X_n})$.
   \end{definition}
   Note that the limits in $\epsilon,K$ are actually infima.
 { 
We also define a notion of relative entropy to state the best subadditivity result. We compare it in the next subsection, but note already that it coincides with the definition of section 4 when $\mathbf{Y}=\emptyset$.

  \begin{definition}  
     We  define,  for $\tau=\tau_{\mathbf    {X_1},...,\mathbf{X_n},\mathbf{Y}}$, a \textit{\textbf{random microstate free entropy in the presence}} of $\mathbf{Y}$ as :
     \begin{align*}
\tchi&(\mathbf{X_1};...;\mathbf{X_n}:\mathbf{Y})=
\\
&\sup_{ R\geq\mathcal{R}(\tau)}\lim_{K\to\infty,\epsilon\to0}
     \limsup_{N\to\infty}\left(\frac{1}{N^2}\sup_{\mu\in A_{N,\epsilon,K}(\tau)} \text{Ent}(p \mu )+\frac{n}{2}\log{N} \right)\end{align*}
   \end{definition}
  }
\begin{theorem}\label{Mmut}The free maximal mutual entropy satisfies the following properties :
\begin{enumerate}
\item (Vanishing for one variable) 
$$\tchi_{Mmut}(\mathbf{X_1})=0,$$
 for any single multivariable {\bl having finite-dimensional approximants}.

\item (Improved Subadditivity) \begin{align*}&\tchi_{Mmut}(\mathbf{X_1};...;\mathbf{X_m};\mathbf{X_{m+1}};...;\mathbf{X_{n}}:\mathbf{Y})\leq {\bl \tchi_{Mmut}(\mathbf{X_1}\cup...\cup\mathbf{X_m};\mathbf{X_{m+1}}\cup...\cup\mathbf{X_{n}}:\mathbf{Y})+}\\& \tchi_{Mmut}(\mathbf{X_1};...;\mathbf{X_m}:\mathbf{X_{m+1}}\cup...\cup\mathbf{X_{n}}\cup\mathbf{Y})+\tchi_{Mmut}(\mathbf{X_{m+1}};...;\mathbf{X_n}:\mathbf{X_{1}}\cup...\cup\mathbf{X_{m}}\cup\mathbf{Y}).
\end{align*}
\item (Improved subadditivity of entropy) \begin{align*}
\tchi&(\mathbf{X_1},\mathbf{X_{2}}{ :\mathbf{Y}})\leq \tchi_{Mmut}(\mathbf{X_1};\mathbf{X_{2}}{:\mathbf{Y}})+\tchi(\mathbf{X_1}:\mathbf{X_{2}}{\cup\mathbf{Y}})+\tchi(\mathbf{X_2}:\mathbf{X_{1}}{\cup\mathbf{Y}}).
 \end{align*}
 \item (Agreement with previous definition) 
If  $\mathbf{X_i}$ are hyperfinite multivariables then 
$$\chi_{orb}(\mathbf{X_{1}};...;\mathbf{X_n})\leq \tchi_{Mmut}(\mathbf{X_{1}};...;\mathbf{X_n})$$
(where free orbital entropy is in the sense of \cite{HiaiMUeda}).
If moreover  $\tau_{\mathbf{X_{1}},...,\mathbf{X_{n}}}$ is extremal then
 $$\tchi_{Mmut}(\mathbf{X_{1}};...;\mathbf{X_n})=\chi_{orb}(\mathbf{X_{1}};...;\mathbf{X_n}).$$ 
 \item (Dependence on algebras)
 If $\mathbf{X_{1}},\dots,\mathbf{X_{n}}$, $\mathbf{Y_{1}},\dots,\mathbf{Y_{n}}$ are  multi-variables such that
$\mathbf{Y_{i}}\subset W^*(\mathbf{X_{i}})$ for $1\le i\le n$, then
$$
\tchi_{Mmut}(\mathbf{X_{1}},\dots,\mathbf{X_{n}})\leq\tchi_{Mmut}(\mathbf{Y_{1}},\dots,\mathbf{Y_{n}}).
$$
In particular, $\tchi_\mathrm{Mmut}(\mathbf{X_{1}},\dots,\mathbf{X_{n}})$ 
 depends only upon
$W^*(\mathbf{X_{1}}),\dots,W^*(\mathbf{X_{n}}).$
 \end{enumerate}
 \end{theorem}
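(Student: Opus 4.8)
The plan is to treat each of the five properties as a variant of the corresponding statement in Theorem~\ref{prop} (and Theorem~\ref{chiorbep}), the one genuinely new ingredient being a chain-rule (Pythagorean) identity for the relative entropy with respect to a \emph{product} reference, which generalizes Lemma~\ref{UEnt}(ii): for a measure $\theta$ on a product space with block-marginals $p_i\theta$ and any product reference $\bigotimes_i\sigma_i$,
$$\text{Ent}(\theta\mid\textstyle\bigotimes_i\sigma_i)=\text{Ent}(\theta\mid\textstyle\bigotimes_i p_i\theta)+\sum_i\text{Ent}(p_i\theta\mid\sigma_i),$$
used together with the global-conjugation reduction already exploited in Theorem~\ref{chiorbep}. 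Throughout I write $\theta=p\mu$ for the marginal on the $\mathbf X$-variables and recall that $\bigotimes_i p_iU\theta=U(\bigotimes_i p_i\theta)$ is unitarily invariant.

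For (1), a single block makes $p_1$ the identity, so $p_1Up\mu=U\mu$ and the defining integrand is $\text{Ent}(\mu\mid U\mu)$; the statement is then exactly Theorem~\ref{prop}(2), with $\nu=U\mu$ an approximant of the same precision giving $\text{Ent}(\nu\mid U\nu)=0$ and negativity giving the reverse bound, the finite-dimensional-approximants hypothesis ensuring the competitor set is nonempty. For (5) I would copy the proof of Theorem~\ref{prop}(8): use Kaplansky density to approximate each $\mathbf Y_i$ by a blockwise polynomial map $\mathbf P=(\mathbf P_i)_i$, observe that $\mathbf P$ commutes with blockwise conjugation, so $\mathbf P_\star\bigl(\bigotimes_i p_i^{\mathbf X}U\mu\bigr)=\bigotimes_i p_i^{\mathbf Y}U\mathbf P_\star\mu$, and then apply the data-processing inequality (\ref{push}) with reference $\rho=\bigotimes_i p_i^{\mathbf X}U\mu$ to obtain $\text{Ent}(\mathbf P_\star\mu\mid\bigotimes_i p_i^{\mathbf Y}U\mathbf P_\star\mu)\ge\text{Ent}(\mu\mid\bigotimes_i p_i^{\mathbf X}U\mu)$; taking suprema and limits yields the claimed monotonicity.

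The subadditivity statements (2) and (3) are where the chain-rule identity does the real work, and their common device is to first restrict the suprema to globally invariant measures. Since the fine reference $\bigotimes_i p_iU\theta$ is both $U^G$-invariant and unchanged under $\mu\mapsto U^G\mu$, Lemma~\ref{UEnt}(i) shows $\text{Ent}(\theta\mid\bigotimes_i p_iU\theta)\le\text{Ent}(U^G\theta\mid\bigotimes_i p_iU\theta)$, so for (2) I may assume $\mu=U^G\mu$; for (3) the same reduction is available because $\text{Ent}(U^G\theta)\ge\text{Ent}(\theta)$ by concavity. The payoff is a marginal coincidence valid only after this reduction: when $\theta=U^G\theta$ one has $q_i\theta=q_iV\theta$ and $p_i\theta=p_iU\theta$ (conjugating one block, or one group of blocks, by a common unitary leaves a globally invariant marginal unchanged), where $V=U^\Pi$ for the two-block partition $\Pi$ as in Lemma~\ref{UEnt}(iii). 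For (2) I then group the fine reference as $\Sigma_1\otimes\Sigma_2$ with $\Sigma_1=\bigotimes_{i\le m}p_iU\theta$, and the identity above gives $\text{Ent}(\theta\mid\bigotimes_i p_iU\theta)=\text{Ent}(\theta\mid q_1\theta\otimes q_2\theta)+\text{Ent}(q_1\theta\mid\Sigma_1)+\text{Ent}(q_2\theta\mid\Sigma_2)$; the coincidence turns the first term into the coarse two-block $\tchi_{Mmut}$-integrand and the other two into the within-group integrands in the presence of the complementary group and $\mathbf Y$, after which $\sup(a+b+c)\le\sup a+\sup b+\sup c$ and the $\epsilon,K$-infima finish the argument. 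For (3) I use instead the classical mutual-information identity $\text{Ent}(\theta)=\text{Ent}(\theta\mid p_1\theta\otimes p_2\theta)+\text{Ent}(p_1\theta)+\text{Ent}(p_2\theta)$; the coincidence $p_i\theta=p_iU\theta$ makes the first term a $\tchi_{Mmut}$-integrand, the $\tfrac{\bar n}{2}\log N$ normalization splits correctly between the two marginal entropies, and $\text{Ent}(p_i\theta)=\text{Ent}(p_i\mu)$ is a legitimate competitor for $\tchi(\mathbf X_i:\dots)$ precisely because $\mu$ already approximates the full joint state.

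The main obstacle is property (4), which requires a two-sided (pinching) argument. For $\chi_{orb}\le\tchi_{Mmut}$ I would run the first half of the proof of Theorem~\ref{prop}(6), but with the crucial observation that for fixed microstates $\Xi_i$ the conjugated single-block marginal $p_iU\Psi_{\Xi\star}\nu_\Xi$ is exactly the orbit (push-forward of Haar) measure $\omega_{\Xi_i}$, so that $\bigotimes_i p_iU\Psi_{\Xi\star}\nu_\Xi=\Psi_{\Xi\star}\mathcal H_N^n$; data processing (\ref{push}) applied to the single map $\Psi_\Xi$ now comparing $\nu_\Xi$ with $\mathcal H_N^n$ gives $\log\gamma_{N,\Xi,\epsilon,K}=\text{Ent}(\nu_\Xi\mid\mathcal H_N^n)\le\text{Ent}(\Psi_{\Xi\star}\nu_\Xi\mid\bigotimes_i p_iU\Psi_{\Xi\star}\nu_\Xi)$, bounding $\chi_{orb}$ by $\tchi_{Mmut}$ after limits. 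For the reverse inequality in the extremal case, Lemma~\ref{UEnt}(i) with the $U$-invariant reference $\bigotimes_i p_iU\mu$ gives $\text{Ent}(\mu\mid\bigotimes_i p_iU\mu)=\text{Ent}(\mu\mid U\mu)+\text{Ent}(U\mu\mid\bigotimes_i p_iU\mu)\le\text{Ent}(\mu\mid U\mu)$, the last term being a negative mutual information, whence $\tchi_{Mmut}\le\tchi_{orb}$; combined with $\tchi_{orb}=\chi_{orb}$ from Theorem~\ref{prop}(6), this pinches $\tchi_{Mmut}=\chi_{orb}$. The delicate point to get right is exactly the orbit-measure identification that lets the same conjugation map $\Psi_\Xi$ transport $\mathcal H_N^n$ onto the product reference, since the two halves of the argument push the comparison in opposite directions.
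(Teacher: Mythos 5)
Your proposal is correct, and for parts (1), (2), (3) and (5) it is essentially the paper's own argument: the same reduction of the suprema to globally invariant measures $\mu=U^G\mu$ (justified by Lemma \ref{UEnt}(i) for (2) and by concavity of entropy for (3)), the same marginal coincidences $p_i\theta=p_iUp\theta$ and $q_i\theta=q_iVp\theta$ after that reduction, the same decompositions from Lemma \ref{UEnt}(ii), and for (5) the same Kaplansky-plus-data-processing scheme as in Theorem \ref{prop}(8). Where you genuinely depart from the paper is part (4). The paper does not prove (4) directly at all: it quotes Proposition \ref{rel} (which gives $\tchi_{Imut}\leq\tchi_{Mmut}\leq\tchi_{orb}$ via Lemma \ref{UEnt}(i)) together with Theorem \ref{Imut}(4) (which gives $\chi_{orb}\leq\tchi_{Imut}$ by a Csiszar I-projection argument, built on the competitor measure $T_{N,\epsilon,K}$ with density $1_{\Gamma}/\gamma_{N,\Xi',\epsilon_1,K_1}$ with respect to $S_{N,\epsilon_2,K_2}$), and then pinches with Theorem \ref{prop}(6) in the extremal case. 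You instead prove $\chi_{orb}\leq\tchi_{Mmut}$ directly, by observing that $U\Psi_{\Xi*}\nu_\Xi=\Psi_{\Xi*}\mathcal{H}_N^n$ is already a product of orbit measures, hence coincides with $\bigotimes_i p_iU\Psi_{\Xi*}\nu_\Xi$, so that (\ref{push}) applied to the single map $\Psi_\Xi$ bounds $\log\gamma_{N,\Xi,\epsilon,K}=\text{Ent}(\nu_\Xi|\mathcal{H}_N^n)$ by the maximal-mutual integrand of the approximating measure $\Psi_{\Xi*}\nu_\Xi$; the reverse bound $\tchi_{Mmut}\leq\tchi_{orb}$ you reprove inline (it is exactly the second half of Proposition \ref{rel}), and the extremal pinching via Theorem \ref{prop}(6) is the same. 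Both routes are valid; yours has the advantage of being self-contained in the Mmut framework and of bypassing the Csiszar projection machinery of the I-mutual section entirely (it is the first half of the proof of Theorem \ref{prop}(6) with $U\mu$ replaced by the product reference), while the paper's detour buys more at once: it establishes the full chain $\chi_{orb}\leq\tchi_{Imut}\leq\tchi_{Mmut}\leq\tchi_{orb}$ relating all three variants, in particular the harder lower bound $\chi_{orb}\leq\tchi_{Imut}$ needed for Theorem \ref{Imut}(4) itself, which your argument does not yield.
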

 
\begin{proof} The proofs of
{\sl $(1),(5)$} {are} similar to the corresponding properties of $\tchi_{orb}$.

\medskip

(2)
Let $p$ be the projection on the $\mathbf{X}$ variables, 
$p_i$ the projection $\mathbf{X_i}$, and 
$q_1,q_2$  the projections on $\mathbf{X_1},\ldots,\mathbf{X_m}$
and $\mathbf{X_{m+1}},\ldots,\mathbf{X_n}$, respectively. Let $\mu\in  A_{N,\epsilon,K}(\tau)$,  
 we may assume, {as in the proof of Theorem \ref{chiorbep},}  $\mu=U^G\mu$, so that  we have $p_i Up\mu=p_i \mu$ and $q_iV\mu=q_i\mu$.
Applying lemma \ref{UEnt} (ii) we get~:
\begin{align*}\text{Ent}(p \mu|\bigotimes_i p_i Up\mu)=\text{Ent}(p \mu|q_1Vp\mu\otimes q_2Vp\mu)+\text{Ent}(q_1\mu\otimes q_2\mu|\bigotimes_i p_iUp\mu)\end{align*}
And we have~:
$$\text{Ent}(q_1\mu\otimes q_2\mu|\bigotimes_i p_iUp\mu)=
\text{Ent}(q_1\mu|\bigotimes_{i=1,\ldots,m} p_iUp\mu)+
\text{Ent}(q_2\mu|\bigotimes_{i=m+1,\ldots,n} p_iUp\mu).
$$
Taking {suprema and} limits yields the inequality.

\medskip

 (3)
With a similar notation as in the previous point, we take $\mu=U^G\mu$ in $ A_{N,\epsilon,K}(\tau)$, then~:
\begin{align*}Ent&(p \mu | Leb)=\text{Ent}(p \mu |q_1V\mu\otimes q_2V\mu)+\text{Ent}(q_1\mu\otimes q_2\mu|Leb)\end{align*}
and again we may take { suprema and} limits to get the required conclusion.

\medskip

(4) This follows from Theorem \ref{prop} (6), as well as Theorem \ref{Imut} (4) and Proposition \ref{rel} to be proved below. 
 
\end{proof}

\subsection{I-mutual entropy}\label{section_Imut}
In order to extend { again in this subsection} \cite{HiaiMUeda} for (not necessarily hyperfinite) multivariables, we consider multivariables $\mathbf{X_i}=(\mathbf{X_{i1}},...,\mathbf{X_{iP_i}})$  where each
$\mathbf{X_{ij}}$ is itself a family  of  hyperfinite multivariables, i.e. 
 $\mathbf{X_{ij}}=\{X_{ij1},...,X_{ijQ_{ij}}\}$ and  $\tilde{P_i}=\sum_{j=1}^{P_i}Q_{ij}$,
$\bar{n}=\sum_{i=1}^n\tilde{P_i}$. 
 For the definition of free entropy in presence we consider  also  analogously $\mathbf{Y}=(\mathbf{Y_{1}},...,\mathbf{Y_{P}})$ containing $\bar{t}$ variables.
For technical reasons (in order to get values agreeing with those of \cite{HiaiMUeda} in the hyperfinite case) we will let the approximations  depend on doubled parameters 
$\epsilon=(\epsilon_1,\epsilon_2),K=(K_1,K_2)$.

We first consider $\sigma_{ij}=\sigma_{N,\epsilon_2,K_2}(\tau_{\mathbf{X_{ij}}})$ the normalized restriction of  Lebesgue measure to the  set $V_{\epsilon_2,K_2}(p_{ij}\tau)$ of $\epsilon_2,K_2$ approximations of $p_{ij}\tau=\tau_{\mathbf{X_{ij}}}$ where $p_{ij}$ gives the marginals on the $ij$-th bunch of hyperfinite variables.
We denote by $C_{N,\epsilon,K}(\tau)$   
 Csiszar's I-projection of $S_{N,\epsilon_2,K_2}(\tau):=\bigotimes_{i,j}\sigma_{ij}$
 on~: $$A_{N,\epsilon,K}(\tau)=\{ \mu\in P((H_R^N)^{\bar{n}+{\bar{t}}})\ | \ \tau_{\mu} \in V_{\epsilon_1,K_1}(\tau)\ \forall i,j
 \  p_{ij} \tau_{\mu}\in V_{\epsilon_2,K_2}(p_{ij}\tau)\}.$$  Thus, we allow us to approximate better the hyperfinite marginals. 
 This will be used to define an I-mutual entropy with good additivity properties, which was a motivation for Voiculescu's non-microstates mutual information and for Hiai-Miyamoto-Ueda's microstate variant. However the other  variants seem to be better behaved in every other respects. We will use not only a free ultrafilter $\omega$ on the integers but also a point $\theta$ in the boundary of the Stone-{\bl \v{C}ech} compactification of $(0,1]$. 
If $A_{N,\epsilon,K}(\tau)$ does not contain elements of finite entropy, any entropy involving $C_{N,\epsilon,K}$ (thus undefined) is by convention $-\infty$.
Likewise,  a sup over an empty set is $-\infty$.

  \begin{definition}
Let $\tau=\tau_{\mathbf{X_1},...,\mathbf{X_n},\mathbf{Y}}$,
   we define \textit{\textbf{I-mutual entropy}} as 
\begin{align*}&\tchi_{Imut}(\mathbf{X_1};...;\mathbf{X_n}:\mathbf{Y})=\sup_{ R\geq \mathcal{R}(\tau)}\limsup_{\epsilon_1\to0}\limsup_{K_1\to\infty}\limsup_{\epsilon_2\to0}\limsup_{K_2\to\infty}\limsup_{N\to\infty}\\ &\left(\frac{1}{N^2}\text{Ent}(p C_{N,\epsilon,K}(\tau)|p_1UpC_{N,\epsilon,K}(\tau)\otimes... \otimes p_nUp C_{N,\epsilon,K}(\tau)\right),\end{align*}
  where $p_i$ is the projection on submultivariables  $\mathbf{X_i}$ and $p$ on $\mathbf{X_1},...,\mathbf{X_n}$.   We  write $\tchi_{Imut}(\mathbf{X_1};...;\mathbf{X_n})$ when $\mathbf{Y}=\emptyset$. Likewise we define  $\tuchi_{Imut}(\mathbf{X_1};...;\mathbf{X_n}:\mathbf{Y})$ a liminf variant (with respect to $N,\epsilon,K$) of I-mutual entropy  and an ultrafilter variant $\totchi_{Imut}(\mathbf{X_1};...;\mathbf{X_n}:\mathbf{Y})$ 
(with $\lim_{1/R\to\theta}\lim_{\epsilon_1\to\theta}\lim_{K_1\to\omega}\lim_{\epsilon_2\to\theta}\lim_{K_2\to\omega}\lim_{N\to\omega}$).

   \end{definition}

      We will also need a notion of free I-entropy in the presence of other variables to get additivity properties with I-mutual entropy. Instead of maximizing the entropy of the projection of measures also approximating $\mathbf{Y}$, which would be more natural in the spirit of Voiculescu's definition and correspond to the definition taken in the previous subsection, we take Csiszar's projection including approximation of  $\mathbf{Y}$, we project and take entropy.
   
    \begin{definition}  We define \textit{\textbf{free I-entropy in the presence}} of $\mathbf{Y}$ as :
     \begin{align*}\tchi_{I}&(\mathbf{X_1};...;\mathbf{X_n}:\mathbf{Y})=\sup_{ R\geq \mathcal{R}(\tau_{\mathbf{X_1},...,\mathbf{X_n},\mathbf{Y}})}\limsup_{K_1\to\infty,\epsilon_1\to0}\limsup_{K_2\to\infty,\epsilon_2\to0}\limsup_{N\to\infty}\\ &\left(\frac{1}{N^2}\text{Ent}(p C_{N,\epsilon,K}(\tau_{\mathbf{X_1},...,\mathbf{X_n},\mathbf{Y}}))+\frac{n}{2}\log{N} \right),\end{align*}
     and likewise $\totchi_{I}(\mathbf{X_1};...;\mathbf{X_n}:\mathbf{Y})$
     , $\tuchi_{I}(\mathbf{X_1};...;\mathbf{X_n}:\mathbf{Y})$.
     
   \end{definition}
    We have inequalities, as in sections 4 and 6, given in the following lemma.
   
   \begin{lemma}
We have :
\begin{align*}{ \tchi_I(\mathbf{X_1},...,\mathbf{X_n}:\mathbf{Y})}&\leq \tchi(\mathbf{X_1},...,\mathbf{X_n}:\mathbf{Y}),\\ 
{
\chi(\mathbf{X_1};...;\mathbf{X_n}:\mathbf{Y})}&\leq \tchi(\mathbf{X_1},...,\mathbf{X_n}:\mathbf{Y}),
\\{
\chi(\mathbf{X_1};...;\mathbf{X_n})}&\leq \tchi_I(\mathbf{X_1},...,\mathbf{X_n}),
\end{align*}
 and corresponding ultrafilter, liminf variants.
 
  If
{
 $\tau_{\mathbf{X_1};...;\mathbf{X_n},\mathbf Y}$ is extremal we also have :
$$\chi(\mathbf{X_1},...,\mathbf{X_n}:\mathbf{Y})=  \tchi(\mathbf{X_1},...,\mathbf{X_n}:\mathbf{Y}).$$
Especially,  if $\tau_{\mathbf{X_1};...;\mathbf{X_n}}$ is extremal we have :$$\chi(\mathbf{X_1};...;\mathbf{X_n})= \tchi_{I}(\mathbf{X_1};...;\mathbf{X_n})=\tchi(\mathbf{X_1};...;\mathbf{X_n}).$$}
\end{lemma}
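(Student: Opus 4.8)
The plan is to establish the three displayed inequalities by exhibiting explicit competitors in the relevant suprema, and then to derive the extremal equalities from the concentration Lemma~\ref{concentration} exactly as in Proposition~\ref{main}. Throughout I write $\tau=\tau_{\mathbf{X_1},\ldots,\mathbf{X_n},\mathbf{Y}}$, let $p$ be the projection onto the $\mathbf X$-variables, and use that each $V_{\epsilon,K}$ is convex and that the joint microstate set $\Gamma_R(\tau,\epsilon,K,N)$ is open. Since every step below invokes only the monotonicity of $\limsup$, $\liminf$ and $\lim_\omega$ together with pointwise-in-$N$ estimates, the $\liminf$ and ultrafilter variants follow verbatim, so I treat only the $\limsup$ case, and I keep the normalizing term $\frac{\bar n}{2}\log N$ fixed so that it cancels in every comparison.

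Two of the inequalities are competitor arguments. For $\chi(\mathbf{X_1},\ldots,\mathbf{X_n}:\mathbf{Y})\le\tchi(\mathbf{X_1},\ldots,\mathbf{X_n}:\mathbf{Y})$, recall that Voiculescu's entropy in the presence is built from $\mathrm{Leb}(p\Gamma_R(\tau,\epsilon,K,N))$, the $\mathbf X$-projection of the joint microstate set. I would take a Borel measurable section $M\mapsto(M,L(M))\in\Gamma_R$ of $p$ over the open set $p\Gamma_R$ (routine measurable selection) and push the normalized Lebesgue measure $\lambda$ of $p\Gamma_R$ forward to a measure $\mu$ on the full space; then $p\mu=\lambda$ so $\mathrm{Ent}(p\mu)=\log\mathrm{Leb}(p\Gamma_R)$, while $\mu$ is supported on $\Gamma_R$, whence $\tau_\mu\in V_{\epsilon,K}(\tau)$ and $\mu\in A_{N,\epsilon,K}(\tau)$. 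Thus $\mu$ is admissible, and letting $N\to\infty$, then $\epsilon\to0,K\to\infty$ and $\sup_R$ gives the claim. For $\tchi_I\le\tchi$, observe that the Csiszar projection $C_{N,\epsilon,K}(\tau)\in A_{N,\epsilon,K}(\tau)$, so in particular $\tau_{C_{N,\epsilon,K}(\tau)}\in V_{\epsilon_1,K_1}(\tau)$ and $pC_{N,\epsilon,K}(\tau)$ is one competitor in the supremum defining $\tchi$ at parameters $(\epsilon_1,K_1)$; hence $\mathrm{Ent}(pC_{N,\epsilon,K}(\tau))\le\sup_{\tau_\mu\in V_{\epsilon_1,K_1}(\tau)}\mathrm{Ent}(p\mu)$, and as the right side is independent of $(\epsilon_2,K_2)$, taking $\limsup_N$ and then the successive $(\epsilon_2,K_2)$, $(\epsilon_1,K_1)$ limits and $\sup_R$ yields $\tchi_I\le\tchi$.

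The delicate inequality is $\chi(\mathbf{X_1},\ldots,\mathbf{X_n})\le\tchi_I(\mathbf{X_1},\ldots,\mathbf{X_n})$ (here $\mathbf Y=\emptyset$, so $p$ is the identity and $\mathrm{Ent}(pC)=\mathrm{Ent}(C)$). The key observation is that, writing $S=\bigotimes_{ij}\sigma_{ij}$ with density $\left(\prod_{ij}L_{ij}\right)^{-1}$ on $\prod_{ij}\Gamma_{ij}$, where $\Gamma_{ij}=\Gamma_R(\tau_{\mathbf{X_{ij}}},\epsilon_2,K_2,N)$ and $L_{ij}=\mathrm{Leb}(\Gamma_{ij})$, one has for every $\mu\ll S$ the identity $\mathrm{Ent}(\mu|S)=\mathrm{Ent}(\mu)-\sum_{ij}\log L_{ij}$; hence $C_{N,\epsilon,K}(\tau)$, which maximizes $\mathrm{Ent}(\cdot|S)$ over $A_{N,\epsilon,K}(\tau)$, in fact maximizes the ordinary entropy over all $\mu\in A_{N,\epsilon,K}(\tau)$ supported in $\prod_{ij}\Gamma_{ij}$. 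It then suffices to produce one competitor of this form. Setting $\delta=\min(\epsilon_1,\epsilon_2)$ and $L=\max(K_1,K_2)$, any point of $\Gamma_R(\tau,\delta,L,N)$ has all its $ij$-submarginals in $\Gamma_{ij}$ and meets every barycentric constraint, so the normalized Lebesgue measure $\mu_0$ on $\Gamma_R(\tau,\delta,L,N)$ lies in $A_{N,\epsilon,K}(\tau)$, is supported in $\prod_{ij}\Gamma_{ij}$, and gives $\mathrm{Ent}(C_{N,\epsilon,K}(\tau))\ge\mathrm{Ent}(\mu_0)=\log\mathrm{Leb}(\Gamma_R(\tau,\delta,L,N))$. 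Adding $\frac{\bar n}{2}\log N$, taking $\limsup_N$ and then the $(\epsilon_2,K_2)$ limit (which forces $\delta\to0,L\to\infty$) recovers $\chi_R(\mathbf{X_1}\cup\cdots\cup\mathbf{X_n})$, and $\sup_R$ finishes it. The main obstacle is exactly this matching of the two families of truncation parameters so that the Csiszar base measure $S$ costs no entropy: the inclusion $\Gamma_R(\tau,\delta,L,N)\subset\prod_{ij}\Gamma_{ij}$ is what makes the offset $\sum_{ij}\log L_{ij}$ disappear.

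Finally, for the extremal statements I would argue as in Proposition~\ref{main}. When $\tau$ is extremal, Lemma~\ref{concentration} gives, for suitable $\delta,L$ and any $\mu$ with $\tau_\mu\in V_{\delta,L}(\tau)$, that $\mu(\Gamma_R(\tau,\epsilon,K,N))\ge1-\eta$; projecting, $p\mu(p\Gamma_R(\tau,\epsilon,K,N))\ge1-\eta$, so the estimate~(\ref{ent}) applied with $F=p\Gamma_R$ yields $\mathrm{Ent}(p\mu)\le(1-\eta)\log\mathrm{Leb}(p\Gamma_R)+\eta\log\mathrm{Leb}((H_N^R)^{\bar n})+O(1)$, where the middle term is controlled by the finiteness noted in the arc-sine Remark. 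Taking the supremum over admissible $\mu$, then the usual limits and $\eta\to0$, gives $\tchi(\mathbf{X_1},\ldots,\mathbf{X_n}:\mathbf{Y})\le\chi(\mathbf{X_1},\ldots,\mathbf{X_n}:\mathbf{Y})$, which combined with the second inequality proves the equality. Its case $\mathbf Y=\emptyset$ reads $\chi(\mathbf X)=\tchi(\mathbf X)$; inserting this into the already-established chain $\chi(\mathbf X)\le\tchi_I(\mathbf X)\le\tchi(\mathbf X)$ collapses all three quantities, giving the last displayed equality.
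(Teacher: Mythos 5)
Your proof is correct and follows essentially the same route as the paper's: a competitor measure with prescribed $\mathbf{X}$-marginal for $\chi\le\tchi$ (the paper uses fiber-wise normalized Lebesgue measure where you use a measurable section, an immaterial difference), the observation that the Csiszar projection of $S_{N,\epsilon_2,K_2}$ maximizes plain entropy over admissible measures supported in $\mathrm{supp}\,S$ together with normalized Lebesgue measure on a joint microstate set as competitor for $\chi\le\tchi_I$, and Lemma \ref{concentration} with the estimate (\ref{ent}) as in Proposition \ref{main} for the extremal equalities. Your handling of the doubled parameters via $\delta=\min(\epsilon_1,\epsilon_2)$, $L=\max(K_1,K_2)$ matches the paper's restriction to $\epsilon_2\le\epsilon_1$, $K_2\ge K_1$, and your explicit treatment of the $\eta\log\mathrm{Leb}\bigl((H_N^R)^{\bar n}\bigr)$ term is if anything slightly more careful than the paper's shorthand.
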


\begin{proof} Let  $\tau=\tau_{\mathbf{X_1};...;\mathbf{X_n}:\mathbf{Y}}$
Since $C_{N,\epsilon,K}\in A_{N,(\epsilon_1,\epsilon_1),(K_1,K_1)}(\tau)$ by  definition we obtain
$\tchi_{I}(\mathbf{X_1};...;\mathbf{X_n}:\mathbf{Y})\leq \tchi(\mathbf{X_1},...,\mathbf{X_n}:\mathbf{Y})$. 

 The inequalities between $\chi$ and $\tilde{\chi}$ are similar to those in sections 4 and 6. 
{Let us merely outline the proofs for the reader's convenience. 
First, recall Voiculescu's definition from \cite{Voi3} :
     \begin{align*}
\chi&(\mathbf{X_1};...;\mathbf{X_n}:\mathbf{Y})=
&\sup_{ R\geq\mathcal{R}(\tau)}\lim_{K\to\infty,\epsilon\to0}
     \limsup_{N\to\infty}\left(\frac{1}{N^2} \log (p \Gamma_R(\tau,\epsilon,K,N))+\frac{n}{2}\log{N} \right),\end{align*}
where $pA\in (H_R^N)^{\bar{n}}$ is now the projection of the set $A\in (H_R^N)^{\bar{n}+t}$.

Fix $\epsilon,K>0$.
For $\mathbf{M}\in p\Gamma_R(\tau,\epsilon,K,N)$, we consider the fiber : $$\Gamma_{R,\mathbf{M}}= (\{\mathbf{M}\}\times(H_R^N)^{t})\cap \Gamma_R(\tau,\epsilon,K,N).$$
We define a probability measure $\mu$ with support in $\Gamma_R(\tau,\epsilon,K,N)$ (so that $\tau_\mu\in V_{\epsilon,K}(\tau)$), on a measurable set $A\in (H_R^N)^{\bar{n}+t}$ by :
$$\mu(A)=\frac{1}{\text{Leb}(p\Gamma_R(\tau,\epsilon,K,N))}\int_{ p\Gamma_R(\tau,\epsilon,K,N)}d\text{Leb}_{(H_R^N)^{\bar{n}}}(\mathbf{M})\frac{(\delta_\mathbf{M}\times \text{Leb}_{(H_R^N)^t})(A\cap\Gamma_{R,\mathbf{M}})}{(\delta_\mathbf{M}\times \text{Leb}_{(H_R^N)^t})(\Gamma_{R,\mathbf{M}})}.$$ 
By definition, we get $p\mu(B) =\frac{1}{\text{Leb}(p\Gamma_R(\tau,\epsilon,K,N))}\text{Leb}(B\cap p\Gamma_R(\tau,\epsilon,K,N)))$, so that : $$\log (p \Gamma_R(\tau,\epsilon,K,N))=\text{Ent}(p\mu)\leq \sup_{\mu\in A_{N,\epsilon,K}(\tau)} \text{Ent}(p \mu ).$$
We conclude $\chi(\mathbf{X_1};...;\mathbf{X_n}:\mathbf{Y})\leq \tchi(\mathbf{X_1},...,\mathbf{X_n}:\mathbf{Y})$.

Conversely, assume $\tau$ extremal. Fix $\eta,\epsilon,K>0$ and choose $\delta,L$ as in Lemma \ref{concentration}  so that, if $\mu\in A_{N,\delta,L}(\tau)$, $\mu(\Gamma_R(\tau,\epsilon,K,N))\geq 1-\eta.$ Note that we have : $$p\mu(p\Gamma_R(\tau,\epsilon,K,N))=\mu(p\Gamma_R(\tau,\epsilon,K,N)\times (H_R^N)^{t})\geq\mu(\Gamma_R(\tau,\epsilon,K,N))\geq 1-\eta.$$ 
  Thus, as in proposition \ref{main}, we get $  \tchi(\mathbf{X_1},...,\mathbf{X_n}:\mathbf{Y})\leq (1-\eta)\chi(\mathbf{X_1},...,\mathbf{X_n}:\mathbf{Y}).$
} 
 
 {Consider now the case without $\mathbf{Y}$, the only remaining inequality is $\chi(\mathbf{X_1};...;\mathbf{X_n})\leq \tchi_I(\mathbf{X_1},...,\mathbf{X_n})$.} First, note that~:
$$\text{Ent}(C_{N,\epsilon,K})=\text{Ent}(C_{N,\epsilon,K}|S_{N,\epsilon_2,K_2})+\text{Ent}(S_{N,\epsilon_2,K_2}).$$ 
{
Indeed by its definition as I-projection of the measure $S_{N,\epsilon_2,K_2}$, we know that
 $C_{N,\epsilon,K}$ has a density with respect to $S_{N,\epsilon_2,K_2}$,} and since $S_{N,\epsilon_2,K_2}$ is Lebesgue measure normalized on some set, the density with respect to Lebesgue measure does not change except for a constant and the equality above is thus easy.
 
  We can also consider $R_{N,\epsilon_2,K_2}$ the normalized Lebesgue measure on $\Gamma_R(\tau,\epsilon_2,K_2,N)$ so that~:
\begin{align*}\log(\mathrm{Leb}(\Gamma_R(\tau,\epsilon_2,K_2,N)))&=Ent(R_{N,\epsilon_2,K_2})=Ent(R_{N,\epsilon_2,K_2}|S_{N,\epsilon_2,K_2})+Ent(S_{N,\epsilon_2,K_2}),\end{align*}
 the last equality coming from inclusion of the support of $R$ in the support of $S$, both being normalized Lebesgue measure on subsets. Finally, by definition of I-projection, we get the  inequality :
$$Ent(C_{N,\epsilon,K}|S_{N,\epsilon_2,K_2})\geq Ent(R_{N,\epsilon_2,K_2}|S_{N,\epsilon_2,K_2}).$$

{ As a consequence, we also get :
$$\frac{1}{N^2}\log(\mathrm{Leb}(\Gamma_R(\tau,\epsilon_2,K_2,N)))+\frac{n}{2}\log N\leq \frac{1}{N^2}Ent(C_{N,\epsilon,K})+\frac{n}{2}\log N,$$
and we can take successively limits in $N,K_2,\epsilon_2,K_1,\epsilon_1,R$ to conclude. 

Note that it is not obvious that in general we could have $\chi(\mathbf{X_1};...;\mathbf{X_n}:\mathbf{Y})\leq \tchi_I(\mathbf{X_1},...,\mathbf{X_n}:\mathbf{Y})$.
}
\end{proof}

  \begin{theorem}\label{Imut}

\begin{enumerate}
\item (Vanishing for one variable) 
$$\tachi_{Imut}(\mathbf{X_1})=0,$${\bl for $\mathbf{X_1}$ having finite-dimensional approximants.}

\item (Improved Subadditivity) \begin{align*}&\tchi_{Imut}(\mathbf{X_1};...;\mathbf{X_m};\mathbf{X_{m+1}};...;\mathbf{X_{n}}:\mathbf{Y})\leq{\bl \tchi_{Imut}(\mathbf{X_1}\cup...\cup\mathbf{X_m};\mathbf{X_{m+1}}\cup...\cup\mathbf{X_{n}}:\mathbf{Y})+}\\ & \tchi_{Imut}(\mathbf{X_1};...;\mathbf{X_m}:\mathbf{X_{m+1}}\cup...
\cup\mathbf{X_{n}}\cup\mathbf{Y})+\tchi_{Imut}(\mathbf{X_{m+1}};...;\mathbf{X_n}:\mathbf{X_{1}}\cup...\cup\mathbf{X_{m}}\cup\mathbf{Y}),
\\
\\ 
&\totchi_{Imut}(\mathbf{X_1};...;\mathbf{X_m};\mathbf{X_{m+1}};...;\mathbf{X_{n}}:\mathbf{Y})={\bl \totchi_{Imut}(\mathbf{X_1}\cup...\cup\mathbf{X_m};\mathbf{X_{m+1}}\cup...\cup\mathbf{X_{n}}:\mathbf{Y})+}\\& \totchi_{Imut}(\mathbf{X_1};...;\mathbf{X_m}:\mathbf{X_{m+1}}\cup...\cup\mathbf{X_{n}}\cup\mathbf{Y})+\totchi_{Imut}(\mathbf{X_{m+1}};...;\mathbf{X_n}:\mathbf{X_{1}}\cup...\cup\mathbf{X_{m}}\cup\mathbf{Y}).
\end{align*}

\item (Improved subadditivity of entropy) \begin{align*}\tchi_{I}&(\mathbf{X_1},\mathbf{X_{2}}:\mathbf{Y})\leq \tchi_{Imut}(\mathbf{X_1};\mathbf{X_{2}}:\mathbf{Y})+ \tchi_{I}(\mathbf{X_1}:\mathbf{X_{2}}\cup\mathbf{Y})+\tchi_{I}(\mathbf{X_2}:\mathbf{X_{1}}\cup\mathbf{Y}),
\\
\\
\totchi_{I}&(\mathbf{X_1},\mathbf{X_{2}}:\mathbf{Y})= \totchi_{Imut}(\mathbf{X_1};\mathbf{X_{2}}:\mathbf{Y})+\totchi_{I}(\mathbf{X_1}:\mathbf{X_{2}}\cup\mathbf{Y})+\totchi_{I}(\mathbf{X_2}:\mathbf{X_{1}}\cup\mathbf{Y}).
 \end{align*}
 \item (Agreement with previous definition) 

 If $\mathbf{X_i}$ are hyperfinite multivariables {(more accurately $\mathbf{P_i}=1$)} then
$$\chi_{orb}(\mathbf{X_{1}};...;\mathbf{X_n})\leq \tchi_{Imut}(\mathbf{X_{1}};...;\mathbf{X_n}).$$
({$\chi_{orb}$} in the sense of \cite{HiaiMUeda}).
If  moreover $\tau_{\mathbf{X_{1}},...,\mathbf{X_{n}}}$ is extremal then
 $$\tchi_{Imut}(\mathbf{X_{1}};...;\mathbf{X_n})=\chi_{orb}(\mathbf{X_{1}};...;\mathbf{X_n}).$$ 
\end{enumerate}
 \end{theorem}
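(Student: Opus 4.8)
The whole plan rests on one structural observation: the measure $C:=C_{N,\epsilon,K}(\tau)$ is, for fixed $N,\epsilon,K$, simultaneously the Csiszar I-projection attached to the full system and to every coarser or finer grouping occurring on the right-hand sides, and it is invariant under global conjugation. Indeed $S_{N,\epsilon_2,K_2}(\tau)=\bigotimes_{ij}\sigma_{ij}$ is a product of normalized Lebesgue measures on conjugation-invariant moment sets, hence invariant under the global (simultaneous) action $U^G$; the constraint set $A_{N,\epsilon,K}(\tau)$ is likewise $U^G$-invariant since it is cut out by moment conditions on $\tau_\mu$ and on the sub-block marginals $p_{ij}\tau_\mu$. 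Uniqueness of the I-projection then forces $U^GC=C$, and marginalizing shows each $p_iC$ is invariant under single-block conjugation, so that $p_iUp\,C=p_iC$. Finally $S_{N,\epsilon_2,K_2}(\tau)$ and $A_{N,\epsilon,K}(\tau)$ depend only on $\tau$ and on the family of hyperfinite sub-blocks, \emph{not} on how these sub-blocks are gathered into the blocks $\mathbf{X_i}$; hence the I-projection of any regrouping is literally the same measure $C$, only the projections $p_i$ entering the entropy functional change.

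Part $(1)$ is then immediate: for a single multivariable $p=p_1$ and $Up\,C=C$ by invariance, so $\text{Ent}(pC\mid p_1Up\,C)=\text{Ent}(C\mid C)=0$. For $(2)$ and $(3)$ I would reduce everything to exact chain-rule identities for the relative entropy of the common measure $C$. Using $p_iUp\,C=p_iC$, the defining integrands are $\text{Ent}(pC\mid\bigotimes_i p_iC)$ and $\text{Ent}(pC)$. Applying Lemma \ref{UEnt} (ii), which is an identity, together with additivity of relative entropy over independent factors, gives for $G_1=\{1,\dots,m\}$, $G_2=\{m+1,\dots,n\}$ with marginals $q_1,q_2$ the exact splitting $\text{Ent}(pC\mid\bigotimes_i p_iC)=\text{Ent}(pC\mid q_1C\otimes q_2C)+\text{Ent}(q_1C\mid\bigotimes_{i\le m}p_iC)+\text{Ent}(q_2C\mid\bigotimes_{i>m}p_iC)$, and likewise $\text{Ent}(pC)=\text{Ent}(pC\mid q_1C\otimes q_2C)+\text{Ent}(q_1C)+\text{Ent}(q_2C)$ for $(3)$. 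By the ``same $C$'' remark, the three summands are exactly the integrands of the three terms on the right. Taking $\limsup_N$ and then infima over the parameters yields the stated inequalities through subadditivity of $\limsup$, while along the net $(\omega,\theta)$ the limits are genuine and additive, giving the asserted equalities; the only care needed is to discard configurations where a term is $-\infty$, which the stated conventions already cover.

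For $(4)$, where $\mathbf{P_i}=1$ so that blocks and sub-blocks coincide, I would prove $\chi_{orb}\le\tchi_{Imut}$ as in Theorem \ref{prop} $(6)$: restrict Haar measure to $\Gamma_{orb}$, normalize, push it forward by conjugation, and bound $\log\gamma_{N,\Xi,\epsilon,K}$ by the relative entropy using the monotonicity (\ref{push}) and the maximality of the I-projection among measures in $A_{N,\epsilon,K}(\tau)$. For the reverse inequality in the extremal case, I would run the concentration Lemma \ref{concentration} and Jung's Lemma exactly as in Theorem \ref{prop} $(6)$, so that any approximant is essentially carried by the orbital microstate set, forcing $\tchi_{Imut}\le(1-\eta)\chi_{orb}$ for all $\eta$; alternatively one identifies, in the extremal case, $\tchi_I$ of the full system with $\chi(\mathbf{X_1}\cup\dots\cup\mathbf{X_n})$ and each single-block $\tchi_I$ with $\chi(\mathbf{X_i})$ via the Lemma preceding this theorem, and then the additivity $(3)$ together with the Hiai--Miyamoto--Ueda formula (\ref{add}) pins down $\tchi_{Imut}=\chi_{orb}$.

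I expect the genuine difficulty to lie in $(4)$. The doubled-parameter construction was introduced precisely so that the hyperfinite marginals are approximated finely enough for the I-projection to reproduce the Hiai--Miyamoto--Ueda value, and the delicate points are the reconciliation of these two objects, the identification of $\tchi_I$ of a single hyperfinite block \emph{in the presence of the others} with the plain $\chi$ of that block in the extremal case, and the finiteness required for the ultrafilter additivity to be exact. By contrast, once the invariance and the ``same $C$'' identity are in place, parts $(1)$--$(3)$ are essentially formal.
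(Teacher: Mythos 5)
Your treatment of parts (1)--(3) is correct and is essentially the paper's own route made explicit. The paper disposes of these parts in one line (``they follow from equalities in the corresponding proofs for $\tchi_{Mmut}$''), and the two structural facts you isolate are exactly what makes that work: first, $U^G C_{N,\epsilon,K}=C_{N,\epsilon,K}$, which you correctly deduce from the $U^G$-invariance of $S_{N,\epsilon_2,K_2}$ and of $A_{N,\epsilon,K}(\tau)$, Lemma \ref{UEnt} (i), and uniqueness of the I-projection (this replaces the ``we may assume $\mu=U^G\mu$'' step available for $\tchi_{Mmut}$, which has a supremum where $\tchi_{Imut}$ has a fixed measure); second, that $S_{N,\epsilon_2,K_2}$ and $A_{N,\epsilon,K}(\tau)$, hence $C_{N,\epsilon,K}$, depend only on $\tau$ and the family of hyperfinite sub-blocks and not on their grouping, so every term in (2) and (3) is an entropy functional of one and the same measure. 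The chain rules of Lemma \ref{UEnt} (ii) then give exact identities at fixed $(N,\epsilon,K)$, and the limsup/ultrafilter bookkeeping is as you describe (nonpositivity of all terms handles the $-\infty$ cases).

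The genuine gap is in part (4), in the inequality $\chi_{orb}\leq\tchi_{Imut}$. Your mechanism --- restrict Haar measure to $\Gamma_{orb}$, normalize, push forward by conjugation of the fixed microstates $\Xi(N)$, then invoke (\ref{push}) together with ``maximality of the I-projection among measures in $A_{N,\epsilon,K}(\tau)$'' --- fails. The I-projection maximizes $\text{Ent}(\,\cdot\,|S_{N,\epsilon_2,K_2})$, not $\text{Ent}(\,\cdot\,|U\cdot)$, and $\tchi_{Imut}$ is not a supremum over $A_{N,\epsilon,K}(\tau)$: it is evaluated at the single measure $C_{N,\epsilon,K}$. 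So from your pushforward $\mu_0=\Psi_{\Xi(N)*}\nu_{\Xi(N)}$ the only inequality maximality yields is $\text{Ent}(\mu_0|S_{N,\epsilon_2,K_2})\leq \text{Ent}(C_{N,\epsilon,K}|S_{N,\epsilon_2,K_2})$; but $\mu_0$ is carried by the single $U(N)^n$-orbit of $\Xi(N)$, a Lebesgue-null set, so $\mu_0$ is singular with respect to $S_{N,\epsilon_2,K_2}$ and $\text{Ent}(\mu_0|S_{N,\epsilon_2,K_2})=-\infty$: the bound is vacuous, and (\ref{push}) controls $\text{Ent}(\mu_0|U\mu_0)$, which cannot be compared to $\text{Ent}(C_{N,\epsilon,K}|S_{N,\epsilon_2,K_2})$ by any of the available tools. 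The missing idea is the paper's auxiliary measure $T_{N,\epsilon,K}$, with density $1_{\Gamma_R}(\Xi')/\gamma_{N,\Xi',\epsilon_1,K_1}$ with respect to $S_{N,\epsilon_2,K_2}$ --- in effect, your pushforward construction but with the base point $\Xi(N)$ replaced by an $S$-distributed random base point, which restores absolute continuity with respect to $S$. Using $U(N)^n$-invariance of $S$ and constancy of $\gamma$ on orbits one checks $T_{N,\epsilon,K}$ is a probability measure lying in $A_{N,\epsilon,K}$, computes $\text{Ent}(T_{N,\epsilon,K}|S_{N,\epsilon_2,K_2})=T_{N,\epsilon,K}(\log\gamma_{N,\cdot,\epsilon_1,K_1})$, and then Jung's lemma --- this is where the doubled parameters $\epsilon_2,K_2$ are consumed --- gives $\gamma_{N,\Xi',\epsilon_1,K_1}\geq\gamma_{N,\Xi(N),\epsilon_1/2,K_1}$ uniformly over $\Xi'$ in the support of $S$; combining with $\text{Ent}(C|D)\geq\text{Ent}(C|S)\geq\text{Ent}(T|S)$ concludes. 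Your reverse inequality in the extremal case is fine (it is Proposition \ref{rel} plus Theorem \ref{prop}(6), as in the paper, or your concentration argument), but your proposed alternative via part (3) and formula (\ref{add}) would additionally require identifying the single-block I-entropy \emph{in the presence of the other variables} with the plain $\chi$ of that block, which is not established in the paper, and it only covers $n=2$.
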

 
\begin{proof}
{\sl $(1)$} Similar to $\tchi_{orb}$.

\medskip
{\sl $(2),(3)$} These follow from equalities in the corresponding proofs for  $\tchi_{Mmut}$.

{\sl $(4)$} After using Theorem \ref{prop}(6) in case of extremality and relating inequalities of our variants (proposition \ref{rel}), it remains to prove : $\chi_{orb}(\mathbf{X_{1}};...;\mathbf{X_n})\leq \tchi_{Imut}(\mathbf{X_{1}};...;\mathbf{X_n})$.

We take notations of \cite{HiaiMUeda} especially $\Xi_i(N)$ (as in lemma 4.2 and definition 4.1 there) is a sequence approximating the hyperfinite variables $\mathbf{X_i}$ in mixed moments. 
 We now show that, for every $\epsilon_1,K_1$, there exists $\delta,L$ such that, for every 
  $\epsilon=(\epsilon_1,\epsilon_2),\epsilon_2\leq\delta,\ K=(K_1,K_2),K_2\geq L$~:
\begin{align*}&\limsup_{N\to \infty} \frac{1}{N^2}\log \gamma_{N,\Xi(N),\epsilon_1/2,K}\leq \limsup_{N\to \infty} \frac{1}{N^2} \text{Ent}( C_{N,\epsilon,K}|D_{N,\epsilon,K}),
\end{align*}
where $C_{N,\epsilon,K}=C_{N,\epsilon,K}(\tau_{\mathbf{X_1},...,\mathbf{X_n}})$, $D_{N,\epsilon,K}=p_1UpC_{N,\epsilon,K}\otimes... \otimes p_n UpC_{N,\epsilon,K}=p_1C_{N,\epsilon,K}\otimes... \otimes p_n C_{N,\epsilon,K}$.
First, we use  Jung's Lemma  and follow the proof of Lemma 4.2 in \cite{HiaiMUeda}. We can thus take $\delta,L$ such that,
for all families of sets  $(\Theta_i)_{i=1,\ldots,n}$ of $N\times N$ hermitian matrices, for $N$ large enough, with  $\tau_{(\Theta_i)}\in V_{\delta,L}(p_i\tau)$ for all $i$,
 {\gree we have}~:
\begin{equation}\label{thetaxi2}
\gamma_{N,\Theta,\epsilon_1,K_1}{\bl \geq }\gamma_{N,\Xi(N),\epsilon_1/2,K_1}.
\end{equation}
 Moreover, using again lemma \ref{UEnt} (ii), \begin{align*}\text{Ent}(C_{N,\epsilon,K}|D_{N,\epsilon,K})&=\text{Ent}(C_{N,\epsilon,K}|S_{N,\epsilon_2,K_2}(\tau))-\text{Ent}(D_{N,\epsilon,K}|S_{N,\epsilon_2,K_2}(\tau))\\ &\geq \text{Ent}(C_{N,\epsilon,K}|S_{N,\epsilon_2,K_2}(\tau)).\end{align*}
  
  In order to use the definition of Csizar's projection, we have to take a specific measure in  $A_{N,\epsilon,K}$. Note that  we have considered Csizar's projection with respect to $S_{N,\epsilon_2,K_2}(\tau)$,  in order to have a measure with support included in a set where hyperfinite variables for marginals will be of the form $\Xi'$, for which we can apply the relation (\ref{thetaxi2}) above.
Let $$dT_{N,\epsilon,K}(\Xi')=\frac{1_{\Xi'\in \Gamma_R(\mathbf{X_1},...,\mathbf{X_n},N,K,\epsilon_1)}}{\gamma_{N,\Xi',\epsilon_1,K_1}}  d(S_{N,\epsilon_2,K_2}(\tau))(\Xi').$$
 This is  a probability measure: since $S_{N,\epsilon_2,K_2}$ is an $U(N)^n$ invariant probability we can  compute the total mass by integrating the density over unitaries and by definition 
\begin{align*}{\mathcal H}_N^n(1_{U\Xi'U^*\in \Gamma_R(\mathbf{X},N,K_1,\epsilon_1)})&={\mathcal H}_N^n(\Gamma_{orb}(\mathbf{X_1},...,\mathbf{X_n}:\Xi_1',...\Xi_n',N,K_1,\epsilon_1))=\gamma_{N,\Xi',\epsilon_1,K_1}.
\end{align*}
 From this and since its support is in $\Gamma_R(\mathbf{X_1},...,\mathbf{X_n},N,K_1,\epsilon_1)$ we deduce that $T_{N,\epsilon,K}\in A_{N,\epsilon,K}$.
  
It follows, by definition of $C$ as Csiszar's projection of $S$, that  \begin{align*}\text{Ent}(C_{N,\epsilon,K}|S_{N,\epsilon_2,K_2})&\geq \text{Ent}(T_{N,\epsilon,K}|S_{N,\epsilon_2,K_2})=T_{N,\epsilon,K}(\log (\gamma_{N,.,\epsilon_1,K_1}))\\&\geq T_{N,\epsilon,K}(\log (\gamma_{N,\Xi(N),\epsilon_1/2,K_1}))=\log (\gamma_{N,\Xi(N),\epsilon_1/2,K_1}).\end{align*}
  The second inequality comes from (\ref{thetaxi2}) since $\epsilon_2\leq \delta$, $K_2\geq L$.
This concludes. \end{proof} 
\subsection{Comparison of the various entropies}
Beyond the case of equality in the context of \cite{HiaiMUeda}, we have the following general inequality.
\begin{proposition}\label{rel}(Relating Inequalities) $$\tchi_{Imut}(\mathbf{X_1};...;\mathbf{X_n}:\mathbf{Y})\leq\tchi_{Mmut}(\mathbf{X_1};...;\mathbf{X_n}:\mathbf{Y})\leq \tchi_{orb}(\mathbf{X_1};...;\mathbf{X_n}:\mathbf{Y})\leq0.$$ 
\end{proposition}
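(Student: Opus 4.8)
The plan is to prove the chain of inequalities by reading it from right to left, treating the three comparisons separately. The rightmost inequality $\tchi_{orb}(\mathbf{X_1};\dots;\mathbf{X_n}:\mathbf{Y})\leq 0$ is immediate from the negativity of relative entropy: every term $\text{Ent}(p\mu|Up\mu)$ in the defining supremum is $\leq 0$ by Jensen's inequality (exactly as for Theorem \ref{prop}(1)), and this survives the supremum over $\mu$, the $\limsup_N$, the limits in $\epsilon,K$, and the $\sup_R$.

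For the middle inequality $\tchi_{Mmut}\leq\tchi_{orb}$, the idea is to compare the two functionals termwise on the common constraint set $A_{N,\epsilon,K}(\tau)$ by means of a chain rule. Fix $\mu\in A_{N,\epsilon,K}(\tau)$ and let $\pi$ be the partition into the $n$ blocks $\mathbf{X_1},\dots,\mathbf{X_n}$, so that $U^{\pi}p\mu=Up\mu$. The product measure $p_1Up\mu\otimes\dots\otimes p_nUp\mu$ is invariant under $U^{\pi}$, since each marginal $p_iUp\mu$ is invariant under conjugation of the $i$-th block. Applying Lemma \ref{UEnt}(i) with this product measure in the role of $\nu$ gives
$$\text{Ent}\big(p\mu\,|\,p_1Up\mu\otimes\dots\otimes p_nUp\mu\big)=\text{Ent}(p\mu\,|\,Up\mu)+\text{Ent}\big(Up\mu\,|\,p_1Up\mu\otimes\dots\otimes p_nUp\mu\big).$$
The last term is $\leq 0$ by negativity of relative entropy, so the left-hand side is $\leq\text{Ent}(p\mu\,|\,Up\mu)$. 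Taking the supremum over $\mu\in A_{N,\epsilon,K}(\tau)$, dividing by $N^2$, and passing to $\limsup_N$, then to the limits in $\epsilon,K$, then to $\sup_R$, yields $\tchi_{Mmut}\leq\tchi_{orb}$. (When the left-hand side is $-\infty$ there is nothing to prove; otherwise finiteness propagates to both right-hand terms as in the proof of Lemma \ref{UEnt}(i), so the chain rule is legitimate.)

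For the leftmost inequality $\tchi_{Imut}\leq\tchi_{Mmut}$, the point is that $\tchi_{Imut}$ evaluates the same functional $\text{Ent}(p\,\cdot\,|\,p_1Up\,\cdot\,\otimes\dots\otimes p_nUp\,\cdot\,)$ at the single measure $C_{N,\epsilon,K}(\tau)$, whereas $\tchi_{Mmut}$ takes its supremum over all approximating measures. First I would check the inclusion of constraint sets: the I-mutual set $A_{N,\epsilon,K}(\tau)$ (with doubled parameters and the extra hyperfinite-marginal conditions) is contained in the Mmut set $A_{N,\epsilon_1,K_1}(\tau)$, hence $C_{N,\epsilon,K}(\tau)\in A_{N,\epsilon_1,K_1}(\tau)$ and
$$\tfrac{1}{N^2}\text{Ent}\big(pC_{N,\epsilon,K}\,|\,p_1UpC_{N,\epsilon,K}\otimes\dots\otimes p_nUpC_{N,\epsilon,K}\big)\leq\tfrac{1}{N^2}\sup_{\mu\in A_{N,\epsilon_1,K_1}(\tau)}\text{Ent}\big(p\mu\,|\,p_1Up\mu\otimes\dots\otimes p_nUp\mu\big),$$
where the right-hand side is independent of $\epsilon_2,K_2$. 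I expect the main obstacle to be the bookkeeping of the nested limits rather than any analytic input: one must push the pointwise inequality through $\limsup_N$, then discard the now-constant $\limsup_{\epsilon_2}\limsup_{K_2}$, and finally reconcile the outer $\limsup_{\epsilon_1}\limsup_{K_1}$ on the I-mutual side with the monotone $\lim_{\epsilon_1\to0,K_1\to\infty}$ (an infimum) on the Mmut side. This last step uses that the Mmut quantity is nonincreasing as the approximation tightens, since the supremum is then taken over a shrinking set; consequently its $\limsup$ in $(\epsilon_1,K_1)$ coincides with its infimum, and dominating the I-mutual $\limsup$ by this monotone quantity gives the claim after the final $\sup_R$.
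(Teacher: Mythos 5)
Your proposal is correct and follows essentially the same route as the paper: negativity of relative entropy for the last inequality, the chain rule of Lemma \ref{UEnt}(i) applied with the $U^{\pi}$-invariant product measure $p_1Up\mu\otimes\dots\otimes p_nUp\mu$ as $\nu$ (plus negativity of the remainder term) for the middle one, and the inclusion $C_{N,\epsilon,K}(\tau)\in A_{N,\epsilon,K}(\tau)\subset A_{N,\epsilon_1,K_1}(\tau)$ for the first. Your treatment of the nested limits and of the monotonicity in $(\epsilon_1,K_1)$ is more explicit than the paper's, which simply invokes the inclusion and the convention for the degenerate case, but the mathematical content is the same.
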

\begin{proof}
Negativity comes from negativity of relative entropy. The first inequality follows from  $C_{N,\epsilon,K}(\tau)\in A_{N,\epsilon,K}(\tau)\subset A_{N,\epsilon_1,K_1}(\tau)$ (for $K_1\leq K_2, \epsilon_1\geq\epsilon_2$)  and our conventions in case this is empty.

Finally, applying lemma \ref{UEnt} (i) for any $\mu\in A_{N,\epsilon,K}(\tau)$ {\gree we get} the inequality~:
$$Ent(p\mu|p_1Up\mu\otimes\ldots\otimes p_nUp\mu)\leq Ent(p\mu|Up\mu).$$
The second inequality follows.
 \end{proof}
\bibliographystyle{amsalpha}

\end{document}